\newtheorem{theorem}{Theorem}[section]
\newtheorem{lemma}[theorem]{Lemma}
\newtheorem{proposition}[theorem]{Proposition}
\newtheorem*{lem}{Lemma}
\renewcommand{\Im}{\mathop{\mathrm{Im}}\nolimits}
\renewcommand{\Re}{\mathop{\mathrm{Re}}\nolimits}
\theoremstyle{definition}
\newtheorem{definition}[theorem]{Definition}
\theoremstyle{remark}
\newtheorem{remark}[theorem]{Remark}
\numberwithin{equation}{section}
\DeclareMathOperator{\supp}{supp}
\DeclareMathOperator{\id}{id}
\DeclareMathOperator{\sgn}{sgn}
\begin{document}

\title[Spaces of smooth functions]{Differential expressions with mixed
homogeneity and spaces of smooth functions they generate}
\author{S. V. Kislyakov, D. V. Maksimov, and D. M. Stolyarov}

\thanks{Supported by RFBR, grant no. 11-01-00526, and by the Chebychev Research Laboratory, a grant
by the Government of Russia, contract 11.G34.31.0026}

\address{St. Petersburg Department of the 
V. A. Steklov Math. Institute \\
27 Fontanka, St. Petersburg \\
191023, Russia}

\email{skis@pdmi.ras.ru}

\address{St. Petersburg Polytechnical State University \\
29, Polytechnicheskaya st., St. Petersburg \\
195251, Russia}
 
\email{dimax239@bk.ru}

\address{P. L. Chebyshev Research laboratory \\
St. Petersburg State University \\
29b, 14th Line of Vasil'evskii Island and 
St. Petersburg Department of the 
V. A. Steklov Math. Institute \\
27 Fontanka, St. Petersburg \\
191023, Russia}

\email{dms239@mail.ru}

\begin{abstract}
Let $\{T_1,\dots,T_l\}$ be a collection of differential operators with constant coefficients on the
torus $\mathbb{T}^n$. Consider the Banach space $X$ of functions $f$ on the torus
for which all functions $T_j f$, $j=1,\dots,l$, are continuous. Extending the previous work of
the first two authors, we analyse the embeddability of $X$ into some space $C(K)$
as a complemented subspace. We prove the following. Fix some pattern of mixed homogeneity and
extract the senior homogeneous parts (relative to the pattern chosen) $\{\tau_1,\dots,\tau_l\}$
from the initial operators $\{T_1,\dots,T_l\}$.
Let $N$ be the dimension of the linear span of $\{\tau_1,\dots,\tau_l\}$. If $N\geqslant 2$,
then $X$ is not isomorphic to a complemented subspace of $C(K)$ for any compact space $K$.

The main ingredient of the proof of this fact is a new Sobolev-type embedding theorem.

\end{abstract}
\maketitle

\section{Introduction}

The space $C^{(k)}(\mathbb{T})$ of $k$ times continuously differentiable functions on the unit
circle $\mathbb{T}$ of the complex plane is isomorphic to $C(\mathbb{T})$ (modulo constants,
the isomorphism is given by $k$-fold differentiation). However, it has long been known
that, already for the $2$-dimensional torus $\mathbb{T}^2$, the situation is different. The
understanding of this phenomenon has been increasing gradually, starting with \cite{G} and \cite{H},
and then through the work done in \cite{K1}, \cite{K2}, \cite{KwP}, \cite{Si}, \cite{PS}, \cite{KSi},
\cite{KM1}, \cite{KM2}, and \cite{M}.

We begin our discussion directly with the general framework considered
in \cite{KM1}. Let $T=(T_1,\dots,T_l)$ be
a collection of differential operators with constant coefficients on the torus $\mathbb{T}^n$. This means that
each $T_j$ is a linear combination of differential monomials
$D^{\alpha}=\partial_1^{\alpha_1}\dots\partial_n^{\alpha_n}$. Here
$\alpha=\{\alpha_1,\dots,\alpha_n\}$ is a multiindex composed of nonnegative integers. Next,
by $\partial_j f$ we mean the operator
$g\mapsto\frac{\partial}{\partial t}g(e^{2\pi it})$, $t\in\mathbb{R}$, applied to $f$ with
respect to the $j$th variable.\footnote{In general, we use the mapping
$t\mapsto e^{2\pi it}$ to parametrise the unit circle; so, the
trigonometric system is $\{e^{2\pi ikt}\}_{k\in\mathbb{Z}},\,\, t\in[0, 1)$, and
the (multiple) Fourier series are handled accordingly.}
The quantity $|\alpha|=\alpha_1+\dots +\alpha_n$ is
called the \textit{order} of the differential monomial. The order of a differential
operator is the largest order of a differential monomial involved in the operator.

The above collection $T$ gives rise to the following seminorm on the set of trigonometric
polynomials in $n$ variables:
\[
\|f\|_{T}=\max_{1\leqslant j\leqslant l}\|T_j f\|_{C(\mathbb{T}^n)}.
\]
The Banach space determined by this seminorm (i.e., the result of factorization over the
null-space and completion) is denoted by $C^T(\mathbb{T}^n)$ and is called the space of smooth
function generated by the collection $T$.

When $T$ consists of all differential \textit{monomials} of order at most $s$, we obtain the classical
space $C^{(s)}(\mathbb{T}^n)$ of $s$ times continuously differentiable functions on the
$n$-dimensional torus. It is known that if $n \geqslant 2$ and $s \geqslant 1$, then the bidual of this space
does not embed in a Banach lattice as a complemented subspace, see \cite{K2}, \cite{KwP}. In
particular, this bidual (\textit{a fortiori}, the space itself)
is not isomorphic to a complemented subspace of $C(K)$ for any
compact space $K$.

If $T$ is an \textit{arbitrary} finite collection of differential monomials, we deal with
the classical \textit{anisotropic} spaces of smooth functions. The isomorphism problem for them
was treated in \cite{PS}, \cite{Si} and \cite{KSi}. Not giving a precise statement, we
signalize that the following dichotomy occurs: if the collection
$T$ contains a senior differential monomial (i.e., the monomial whose multiindex dominates
coordinatewise all other multiindices involved), then, up to some not quite essential
subtleties (see
\cite{KSi}), $C^T(\mathbb{T}^n)$ is isomorphic to $C(\mathbb{T}^n)$; otherwise, again,
the bidual of $C^T(\mathbb{T}^n)$ is not embeddable complementedly in a $C(K)$-space (more
generally, in a Banach lattice).

The importance of the absence of a ``senior'' operator for nonisomorphism
was further emphasized by the results of \cite{KM1}, \cite{KM2}. In those papers, the case
of a collection $T$ consisting not necessarily of differential monomials was treated for the first
time. 

The main result of \cite{KM1}, \cite{KM2} says the following. Suppose all operators in the
collection $T$ are of order not exceeding $s>0$. In every operator $T_j$ of the collection, we
drop its \textit{junior part}, i.e., all differential monomials of order strictly smaller than
$s$. The remaining \textit{senior part} $\tau_j$ is a homogeneous differential operator of
order precisely $s$. \textit{If there are two linearly independent operators among the $\tau_j$,
$j=1,\dots,l$, then the bidual of $C^T(\mathbb{T}^n)$ is not isomorphic to a complemented subspace
of a $C(K)$-space.}

But if all $\tau_j$ are multiples of one of them, the situation was still unclear. More precisely,
in the case of two-dimensional torus, in \cite{KM1} it was shown that then the space
$C^T(\mathbb{T}^2)$ is isomorphic indeed to a
$C(K)$-space if the junior parts of all $T_j$'s vanish. (In higher dimension the picture is
more complicated.) However, if they do not, nonisomorphism may
again occur. We already saw this when we discussed
anisotropic spaces of smooth functions: two incomparable maximal\footnote{The
terms ``incomparable" and ``maximal" are related to the coordinatewise partial ordering
of multiindices.} monomials involved in the
definition of such a space need not be of one and the same order $s$, though the space itself
is definitely not of type $C(K)$ if two such monomials exist.

This suggests that the concept of mixed homogeneity (permeating the theory of anisotropic spaces)
may play a role also in the general situation. The main result of this paper says that
it is indeed the case. This can be viewed as a joint refinement of the results of \cite{PS}, \cite{Si},
\cite{KSi} and \cite{KM1}, \cite{KM2}.

Geometrically, a \textit{mixed homogeneity pattern} in $n$
variables is determined by a hyperplane $\Lambda$ intersecting the $n$ positive coordinate semiaxes.
The equation of such a hyperplane is $\sum_{k=1}^n\frac{x_k}{a_k}=1$,
where the $a_k$ are positive numbers. A differential operator $S$ is said to be $\Lambda$-homogeneous
if all points corresponding to the multiindices of differential monomials involved
in $S$ belong to $\Lambda$. 
Consider a finite collection $T=\{T_1,\dots,T_l\}$ of differential
operators in $n$ variables such that
\begin{equation}\label{odn}
\sum_{k=1}^n\frac{\alpha_k}{a_k}\leqslant 1
\end{equation}
for every differential monomial $\partial_1^{\alpha_1}\dots\partial_n^{\alpha_n}$
involved in at least one of $T_j$. Dropping all terms in $T_j$ for which we have
strict inequality in (\ref{odn}), we obtain a differential operator $\tau_j$, to
be called the $\Lambda$-senior part of $T_j$.

\begin{theorem}\label{main1}
If for at least one choice of $\Lambda$ there are at least two linearly independent
operators in the collection of $\Lambda$-senior parts, then the
bidual of $C^T(\mathbb{T}^n)$ is not isomorphic to a complemented subspace
of a $C(K)$-space.
\end{theorem}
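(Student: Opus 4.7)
The plan is to reduce the statement to the results of \cite{KM1}, \cite{KM2}, which handle the case in which the senior parts are homogeneous in the classical (isotropic) sense. The reduction is driven by the anisotropic dilations dictated by the chosen hyperplane $\Lambda$. Writing $\Lambda$ as $\sum_k x_k/a_k = 1$ and setting $\delta_t(x_1,\dots,x_n) = (t^{a_1}x_1,\dots,t^{a_n}x_n)$, one checks at once that $\tau_j(f\circ \delta_t) = t\cdot (\tau_j f)\circ \delta_t$, while any junior monomial produces a factor $t^\sigma$ with $\sigma<1$. So on functions of sufficiently small support and after rescaling by $\delta_t$ with $t$ large, the $\Lambda$-senior parts dominate the junior ones uniformly.

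The first step is to convert this scaling heuristic into a \emph{Sobolev-type embedding} (the new ingredient announced in the abstract). Concretely, I would construct an anisotropic Littlewood--Paley decomposition adapted to $\Lambda$ and show that, on each frequency annulus, the junior part of $T_j-\tau_j$ is bounded on $C(\mathbb{T}^n)$ by the senior part $\max_k \|\tau_k f\|_{C(\mathbb{T}^n)}$ times a factor that decays geometrically in the annulus index. Summing the decomposition and absorbing a compact remainder, this identifies $C^T(\mathbb{T}^n)$ with $C^\tau(\mathbb{T}^n)$ up to an isomorphism plus a finite-dimensional/compact perturbation; since such perturbations are inert for the question at hand, and the identification passes to biduals, it suffices to prove the theorem assuming $T_j=\tau_j$ for every $j$.

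With all operators $\Lambda$-homogeneous of weight $1$, I would adapt the obstruction constructed in \cite{KM1}, \cite{KM2}. Using compositions $f_k = \varphi_k \cdot (f\circ\delta_{2^k})$ with a fixed bump $\varphi_k$ of shrinking support, one obtains a sequence of uniformly bounded embeddings of a fixed finite-dimensional test space into $C^\tau(\mathbb{T}^n)$. Because two of the $\tau_j$ are linearly independent, one can exhibit a pair of $\Lambda$-senior symbols whose simultaneous control cannot be achieved by any lattice operation on $f$; this is precisely the ingredient that drives the proof of non-complementability of the bidual in a $C(K)$-space in the isotropic setting, and it transfers verbatim to the mixed-homogeneity setting via the scalings $\delta_t$.

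The main obstacle, as the abstract already suggests, will be the anisotropic Sobolev-type embedding of the first step: one needs the comparison $T_j-\tau_j$ versus $\{\tau_k\}$ with constants uniform across all dyadic anisotropic scales, and a careful handling of the borderline case where the span of $\{\tau_k\}$ is still low-dimensional so that candidate ``multipliers'' reconstructing $T_j - \tau_j$ from the $\tau_k$ are not immediately bounded. Once this embedding is established, the rest of the argument is a formal scaling-plus-reduction, combined with the available technology of \cite{KM1}, \cite{KM2}.
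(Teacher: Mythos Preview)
Your proposal has a genuine gap: you have misidentified what the ``Sobolev-type embedding theorem'' is and where the real difficulty lies.

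First, the reduction $C^T \cong C^\tau$ is not what the paper does and is not obviously available. Your Littlewood--Paley argument would require that the junior parts be dominated by the $\Lambda$-senior parts as Fourier multipliers, and the paper only proves such domination (Proposition~\ref{SecondT}) under an \emph{ellipticity} hypothesis on the senior part. No ellipticity is assumed in Theorem~\ref{main1}; two linearly independent $\tau_j$ can have symbols with plenty of common real zeros. More to the point, the paper never attempts this reduction: the junior parts are handled inside the annihilator argument via the explicit multipliers of Lemma~\ref{L2}, not by proving that they are negligible at the level of the norm on $C^T$.

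Second --- and this is the main issue --- after your reduction you plan to invoke \cite{KM1}, \cite{KM2} ``verbatim via the scalings $\delta_t$''. This does not work. The dilations $\delta_t$ are not automorphisms of $\mathbb{T}^n$, and they do not convert $\Lambda$-homogeneous operators into isotropically homogeneous ones; they merely scale each $\tau_j$ by the same factor $t$. So you are still left with proving the theorem for $\Lambda$-homogeneous operators with genuinely mixed homogeneity, which is exactly the content of the paper. The obstruction in \cite{KM1}, \cite{KM2} is built on a Gagliardo--Nirenberg-type inequality, and the paper states explicitly that this method ``fails totally'' once $k$ or $l$ exceeds $1$. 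The required Sobolev-type embedding is Theorem~\ref{main2}: for proper distributions satisfying the system $-\partial_1^k\varphi_1=\mu_0$, $\partial_2^l\varphi_j-\partial_1^k\varphi_{j+1}=\mu_j$, $\partial_2^l\varphi_N=\mu_N$ with measures on the right, one has $\varphi_j\in W_2^{(k-1)/2,(l-1)/2}$. This has nothing to do with bounding junior parts by senior parts; it is the estimate that turns the annihilator $\mathcal{X}^\bot$ into the source of a bounded operator to Hilbert space which is then shown \emph{not} to be $2$-absolutely summing (contradicting Grothendieck if the bidual were $C(K)$-complemented). The proof of Theorem~\ref{main2} goes through Fourier transforms, residue calculus for an oscillatory kernel, and a nontrivial transfer from $\mathbb{R}^2$ to $\mathbb{T}^2$ --- none of which your proposal supplies or even gestures at.

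In short: the embedding theorem you need is not the one you describe, and the step you call ``formal scaling-plus-reduction'' is where essentially all of the work is.
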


We want to say immediately that this theorem reduces easily to the case of the two-dimensional
torus. This requires slight concentration, but similar procedures were described in \cite{KSi},
\cite{KM1}, and \cite{PS}, so we only give a hint to the proof. The space $C^T(\mathbb{T}^n)$ is
determined in fact by the linear span of the collection $T$; so, if this collection gives rise to
two linearly independed $\Lambda$-senior parts, there is no loss of generality in assuming
the existence of two nonequal differential monomials, each occurring in precisely one
of these $\Lambda$-senior parts. Now, a guideline for the reduction is
to draw the $2$-plane through the origin and the points corresponding to the multiindices of
these two monomials. Consult the papers cited above for more details.

So, in what follows we mainly restrict ourselves to the case of $n=2$. It should be noted that,
in all the above references except for the early papers \cite{G} and \cite{H},
nonisomorphism was proved
by combining the techniques of absolutely summing operators with a Sobolev-type embedding
theorem. Here we do the same, more specifically, we try to imitate the pattern of \cite{KM1},
\cite{KM2}. However, the required embedding theorem is not quite standard.
To present the statement, we remind the reader the definition of a Sobolev
space with nonintegral smoothness:
\begin{equation}\label{sobolev}
W_2^{\alpha,\beta}(\mathbb{T}^2)=
\{f\in C^{\infty}(\mathbb{T}^2)^{\prime} \colon
\{(1 + m^2)^{\frac{\alpha}{2}} (1 + n^2)^{\frac{\beta}{2}}\hat{f}(m,n)\}
\in l^2(\mathbb{Z}^2)\}.
\end{equation}
We need yet another notion.
\begin{definition}\label{PROPER}
A distribution $f$ on $\mathbb{T}^2$ is said to be proper if
$\hat{f}(s,t)=0$ whenever $s=0$ or $t=0$, $s,t\in\mathbb{Z}$.
\end{definition}
In particular, we can talk about a proper measure, or (integrable) function, or
trigonometric polynomial. Note also that, on the set of proper functions, the norm
of the space \eqref{sobolev} is equivalent to the norm
$\left\|f\right\|_{W} = \left\| \{|m|^{\alpha}|n|^{\beta}\hat{f}(m,n)\}\right\|_{l^2(\mathbb{Z}^2)}$. 
\begin{theorem}\label{main2}
Suppose that proper distributions
$\varphi_1,\dots,\varphi_N$ on the $2$-dimensional torus satisfy the
system of equations
\begin{equation} \label{syst}
-\partial_1^k \varphi_1 = \mu_0;\quad
\partial_2^l \varphi_{j} - \partial_1^k \varphi_{j+1} = \mu_{j},
\quad j= 1,\dots, N-1;\quad \partial_2^l \varphi_N = \mu_{N},
\end{equation}
where $\mu_0,\dots,\mu_N$ are measures. If either $k$ or $l$ is odd, then
\begin{equation}\label{est1}
\sum\limits_{j = 1}^{N}\|\varphi_j\|_{W_2^{\frac{k-1}{2},\frac{l-1}{2}}(\mathbb{T}^2)}
\leqslant C \sum\limits_{j = 0}^{N}\|\mu_j\|.
\end{equation}
\end{theorem}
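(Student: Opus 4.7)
Take Fourier coefficients of \eqref{syst} on the proper lattice $\{(m,n)\in\mathbb{Z}^2:mn\ne 0\}$; the system is triangular in $\hat{\varphi}_j$ and solves to
\[
\hat{\varphi}_j(m,n)=-\sum_{i=0}^{j-1}\frac{(2\pi in)^{l(j-1-i)}}{(2\pi im)^{k(j-i)}}\hat{\mu}_i(m,n),
\]
subject to the consistency relation $\sum_{i=0}^N(2\pi im)^{ki}(2\pi in)^{l(N-i)}\hat{\mu}_i=0$ forced by the last equation of \eqref{syst}. By the Plancherel formula for proper functions recalled after Definition~\ref{PROPER},
\[
\|\varphi_j\|_{W_2^{(k-1)/2,(l-1)/2}}^2=\sum_{mn\ne 0}|m|^{k-1}|n|^{l-1}|\hat{\varphi}_j(m,n)|^2,
\]
and expanding the square and using the consistency relation to rebalance powers of $|m|$ and $|n|$ between the two factors reduces the problem to finitely many bilinear sums
\[
\Sigma(\mu,\nu)=\sum_{mn\ne 0}\frac{|\hat{\mu}(m,n)||\hat{\nu}(m,n)|}{|m||n|}
\]
with $\mu,\nu$ drawn from $\{\mu_0,\dots,\mu_N\}$. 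The heart of the theorem is the model case $N=1$: $\Sigma(\mu_0,\mu_1)\leq C(\|\mu_0\|+\|\mu_1\|)^2$ under $-\partial_1^k\varphi=\mu_0$, $\partial_2^l\varphi=\mu_1$.

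The pointwise bound $|\hat{\mu}|\leq\|\mu\|$ produces the logarithmically divergent $\sum 1/(|m||n|)$, so pointwise Fourier estimates are insufficient and I would pass to the dual formulation. Write
\[
\|\varphi\|_{W_2^{(k-1)/2,(l-1)/2}}=\sup\bigl\{|\langle\varphi,g\rangle|:g\text{ proper},\ \|g\|_{W_2^{-(k-1)/2,-(l-1)/2}}\leq 1\bigr\},
\]
and split $g=g_A+g_B$ according to the hyperbolic regions $A=\{|m|^k\geq|n|^l\}$ and $B=\{|m|^k<|n|^l\}$. Using $\varphi=-\partial_1^{-k}\mu_0$ on $A$ and $\varphi=\partial_2^{-l}\mu_1$ on $B$, the measure--continuous duality gives
\[
|\langle\varphi,g_A\rangle|\leq\|\mu_0\|\,\|\partial_1^{-k}g_A\|_\infty,\qquad |\langle\varphi,g_B\rangle|\leq\|\mu_1\|\,\|\partial_2^{-l}g_B\|_\infty,
\]
and the theorem reduces to the $L^\infty$ estimate $\|\partial_1^{-k}g_A\|_\infty+\|\partial_2^{-l}g_B\|_\infty\leq C\|g\|_{W_2^{-(k-1)/2,-(l-1)/2}}$.

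The crux, and the place where the parity hypothesis is essential, is this last $L^\infty$ estimate. A direct Cauchy--Schwarz on $\sum_A|\hat{g}(m,n)|/|m|^k$ returns the divergent $\sum_A|m|^{-(k+1)}|n|^{l-1}$, so some cancellation must enter the picture. Assuming without loss of generality that $k$ is odd, one factors $\partial_1^{-k}=c\,H_1|D_1|^{-k}$, where $H_1$ is the Hilbert transform in the first variable; the sign factor $\sgn(m)$ in its symbol opens the way to a summation by parts on dyadic frequency shells (equivalently, to an $H^1$--BMO pairing adapted to the product torus $\mathbb{T}^2$), which converts the missing half-power into the logarithmic gain needed to close the estimate. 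The case where instead $l$ is odd is handled symmetrically on $B$; if both $k$ and $l$ were even the cancellation would be unavailable in either direction and the naive divergence would persist, which explains why the hypothesis ``$k$ or $l$ is odd'' cannot be dispensed with. I expect the detailed execution of this cancellation---making the summation by parts quantitative and handling the interaction between the two hyperbolic halves $A$ and $B$, together with the auxiliary bilinear sums with non-model weights that appear when $N\geq 2$---to be the technically hardest part of the whole argument.
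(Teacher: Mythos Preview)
Your reduction has a genuine gap at the decisive step. After dualising and splitting the test function by the hyperbolic regions, you claim that the theorem reduces to the uniform estimate
\[
\|\partial_1^{-k}g_A\|_{L^\infty(\mathbb{T}^2)}\leqslant C\|g\|_{W_2^{-(k-1)/2,-(l-1)/2}}.
\]
This estimate is \emph{false}. By duality it is equivalent to
\[
\sum_{(m,n)\in A}|m|^{-k-1}|n|^{l-1}\,|\hat\mu(m,n)|^2\leqslant C\|\mu\|^2\quad\text{for every proper measure }\mu,
\]
and any proper measure with Fourier coefficients bounded away from zero on a positive-density set (for instance $\mu=(\delta_a-\delta_0)\otimes(\delta_b-\delta_0)$ with generic $a,b$) makes the left-hand side diverge, since $\sum_{A}|m|^{-k-1}|n|^{l-1}\asymp\sum_m|m|^{-1}=\infty$. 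The factorisation $\partial_1^{-k}=cH_1|D_1|^{-k}$ inserts a $\sgn m$ into the symbol, but this cannot rescue the estimate: one may simply take $g$ spectrally supported in $\{m>0\}\cap A$, where the sign is constant and no cancellation occurs. In short, once you have thrown away the coupling between $\mu_0$ and $\mu_1$ by estimating the $A$-piece using $\mu_0$ alone, the bound you need is no longer true. The constraint linking the measures must be used more deeply than a frequency-region split allows.

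Your reading of the parity hypothesis is also off: the paper remarks that Theorem~\ref{main2} remains valid when both $k$ and $l$ are even, so the oddness is a convenience of the proof, not an obstruction of the type you describe. In the paper the hypothesis enters elsewhere. The authors first prove the planar analogue (Theorem~\ref{main3}) by a bilinear reduction: they form $\psi_s=\sum s^j\varphi_j$, which satisfies $(\partial_1^k-s\partial_2^l)\psi_s=M_s$, and then establish the bilinear bound $|\langle f_1,g_1\rangle_W|\leqslant C\|f\|_1\|g\|_1$ for solutions of $(\partial_1^k-\tau\partial_2^l)f_1=f$, $(\partial_1^k-\sigma\partial_2^l)g_1=g$. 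That bound is obtained by writing the scalar product as an explicit two-variable oscillatory integral with denominator $((2\pi i\xi)^k-\tau(2\pi i\eta)^l)\overline{((2\pi i\xi)^k-\sigma(2\pi i\eta)^l)}$ and evaluating the $\xi$-integral by residues; the oddness of $k$ guarantees that the two factors contribute the \emph{same} number of poles in the upper half-plane, so that the residues at the $\tau$- and $\sigma$-roots pair off and the resulting one-dimensional integrals in $\eta$ are uniformly bounded. The torus case is then obtained not by discretising this argument but by cutting off the periodic extensions, transferring via an operator $Pf=\tilde f\chi$, and absorbing the commutator terms $\partial^q\chi\cdot\partial^{l-q}\tilde\varphi$ by separate $L^1$-multiplier lemmas. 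None of this structure is visible in your outline.
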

Surely, by the norm of a measure we mean its total variation. 

There is a similar statement for the plane, which looks like this.
\begin{theorem}\label{main3}
Let $\varphi_j$, $j=1,\dots,N$, be compactly supported distributions
on the plane $\mathbb{R}^2$. Suppose that they satisfy equations
\textup{(\ref{syst})}, where $\mu_0,\dots,\mu_N$ are finite \textup{(}compactly
supported\textup{)} measures on the plane. If either $k$ or $l$ is odd, then an analog of
\textup{(\ref{est1})} holds true, namely,
\begin{equation}\label{est}
\sum\limits_{j = 1}^{N}\|\varphi_j\|_{W_2^{\frac{k-1}{2},\frac{l-1}{2}}(\mathbb{R}^2)}
\leqslant C \sum\limits_{j = 0}^{N}\|\mu_j\|.
\end{equation}
\end{theorem}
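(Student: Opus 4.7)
The plan is to reduce Theorem~\ref{main3} to Theorem~\ref{main2} by means of an anisotropic dilation, a periodisation, and a limiting argument. After translation, all the $\varphi_j$ and $\mu_j$ may be assumed supported in a fixed bounded region. For small $\varepsilon > 0$, introduce the anisotropic dilation $D_\varepsilon\colon(x_1,x_2)\mapsto(x_1/\varepsilon^{l},\,x_2/\varepsilon^{k})$ and set $\tilde\varphi_j=\varphi_j\circ D_\varepsilon$, together with measures $\tilde\mu_j$ obtained from $\mu_j$ by the corresponding change of variable and the normalisation factor $\varepsilon^{-kl}$. The exponents $l$ and $k$ appearing in $D_\varepsilon$ are dictated by the requirement that $\partial_1^k$ and $\partial_2^l$ both pick up the same scaling factor $\varepsilon^{-kl}$; consequently the $\tilde\varphi_j$ again satisfy a system of the exact shape of~(\ref{syst}), now with $\tilde\mu_j$ as right-hand sides. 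A direct change-of-variables computation gives $\|\tilde\mu_j\|=\varepsilon^{k+l-kl}\|\mu_j\|$, and for $\varepsilon$ small enough the supports of all $\tilde\varphi_j$ and $\tilde\mu_j$ fit inside an arbitrarily small subcube of $[0,1)^2$; hence the rescaled problem may be reinterpreted on the torus~$\mathbb{T}^2$.

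Before applying Theorem~\ref{main2}, we must pass to proper distributions. Writing $\tilde\varphi_j=\tilde\varphi_j^{\mathrm{pr}}+u_j(x_1)+v_j(x_2)+c_j$, where the last three summands collect the torus Fourier modes of $\tilde\varphi_j$ supported on the coordinate axes, and inspecting~(\ref{syst}) frequency by frequency on those axes, one sees that the proper parts $\tilde\varphi_j^{\mathrm{pr}}$ again satisfy a system of the shape of~(\ref{syst}), with right-hand sides equal to the proper parts of the $\tilde\mu_j$; their total variations are majorised by $\|\tilde\mu_j\|$. Moreover, the seminorm $\|\{|m|^{(k-1)/2}|n|^{(l-1)/2}\hat f(m,n)\}\|_{\ell^2}$ annihilates the non-proper components, so $\|\tilde\varphi_j^{\mathrm{pr}}\|_W=\|\tilde\varphi_j\|_W$. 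Theorem~\ref{main2} applied to the proper parts then yields the torus estimate $\sum_j\|\tilde\varphi_j\|_{W_2^{(k-1)/2,(l-1)/2}(\mathbb{T}^2)}\leqslant C\sum_j\|\tilde\mu_j\|$.

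Converting this back to~(\ref{est}) uses that, since each $\tilde\varphi_j$ has small compact support, its torus Fourier coefficients coincide with the plane Fourier transform of $\tilde\varphi_j$ at integer frequencies, which in turn equal $\varepsilon^{k+l}\hat\varphi_j(\varepsilon^l m,\varepsilon^k n)$. The substitution $\xi=\varepsilon^l m$, $\eta=\varepsilon^k n$ then rewrites the torus $\ell^2$-sum as a Riemann sum of mesh $\varepsilon^l\times\varepsilon^k$ for the integral $\int|\xi|^{k-1}|\eta|^{l-1}|\hat\varphi_j(\xi,\eta)|^2\,d\xi\,d\eta$, multiplied by precisely the power of $\varepsilon$ that cancels against the $\varepsilon^{k+l-kl}$ appearing in $\|\tilde\mu_j\|$. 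Letting $\varepsilon\to 0$ and invoking Fatou's lemma (the integrand is non-negative and $\hat\varphi_j$ is continuous, as $\varphi_j$ is compactly supported) yields~(\ref{est}) with the same constant~$C$. The main technical obstacle is the careful bookkeeping: one must verify that both sides of the torus inequality rescale by the same power of $\varepsilon$ so that the constant from Theorem~\ref{main2} transfers without loss, and that the decomposition into proper parts propagates consistently through the whole chain~(\ref{syst}).
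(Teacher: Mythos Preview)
Your scaling computation is correct and the Riemann-sum/Fatou passage is sound: the powers of $\varepsilon$ on both sides do match, and for a continuous non-negative integrand one can bound $\int_{[-R,R]^2}$ by the lattice sums and let $R\to\infty$. (One quibble: the identity $\|\tilde\varphi_j^{\mathrm{pr}}\|_W=\|\tilde\varphi_j\|_W$ is literally true only when $k,l\geqslant 2$, since for $k=1$ the weight does not vanish on the line $m=0$; but the missing lattice points form a set of planar measure zero after rescaling, so the limiting integral is unaffected.)

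The fatal problem is circularity. In this paper Theorem~\ref{main2} is not an independent input: it is \emph{derived from} Theorem~\ref{main3}. This is announced at the end of the Introduction and carried out in \S\ref{S3}: first Theorem~\ref{main3} is proved on the plane, and only afterwards is the torus version obtained from it by multiplying periodic extensions by a cut-off $\chi$ and controlling the commutators $[\partial_1^k,\chi]$, $[\partial_2^l,\chi]$ through the $L^1$-multiplier Lemmas~\ref{L1lemma}--\ref{L3lemma}. So your argument assumes exactly what has to be proved. The paper's direct route to Theorem~\ref{main3} is different: one forms $\psi_s=\sum_j s^j\varphi_j$, which collapses the chain~(\ref{syst}) to a single equation $(\partial_1^k-s\partial_2^l)\psi_s=\sum_j s^j\mu_j$, reduces to a bilinear estimate (Theorem~\ref{T3}) for pairs $\psi_{s},\psi_{t}$, and recovers the individual $\langle\varphi_i,\varphi_j\rangle$ by inverting a Vandermonde system. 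Theorem~\ref{T3} itself is then established by writing the pairing as a two-dimensional oscillatory integral, evaluating the inner integral by residues (this is where the oddness of $k$ enters, to match the number of poles of $(\rho^k-\tau_1)^{-1}$ and $(\rho^k-\sigma_1)^{-1}$ in the upper half-plane), and bounding the remaining one-dimensional integral by hand.
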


The Sobolev space on the plane in this theorem is defined as follows:
\[
W_2^{\alpha,\beta}(\mathbb{R}^2)=
\{f \in S'(\mathbb{R}^2) \colon
|\xi|^{\alpha} |\eta|^{\beta}\hat{f}(\xi,\eta) \in
L^2(\mathbb{R}^2)\}.
\]
This means that we disregard junior derivatives deliberately. Also, we note that the
two theorems remain true if both $k$ and $l$ are even, but we shall not dwell
on this in the present paper. In fact, Theorem \ref{main2} will be applied in the situation
in which $k$ and $l$ are coprime.

Taken together, Theorems \ref{main2} and \ref{main3} constitute
the second main result of the paper. They differ from classical
statements by the fact that the condition to be a measure (or an
integrable function, which is nearly the same in our setting) is imposed
on some linear combinations of derivatives of different functions rather
than on certain derivatives themselves of one function. If $k=l=1$, the
two theorems were proved in \cite{KM1} by adapting the well-known and
elegant Gagliardo--Nirenberg
method that yields a limit order embedding in Sobolev's original situation.
To be specific, we mean the inequality
\[
\|\varphi\|_{L^2(\mathbb{R}^2)}^2\leqslant
\|\partial_1\varphi\|_{L^1(\mathbb{R}^2)}\|\partial_2\varphi\|_{L^1(\mathbb{R}^2)},
\]
where $\varphi$ is a compactly supported smooth function; we remind the reader
that the key point of this method is to write two formulas representing
$\varphi$ as an ``indefinite integral" of its derivative of order 1.
However, when $k$ or $l$ is greater than $1$, that method fails totally. At present,
passage to Fourier transforms
and then forcing our way through fairly nasty improper oscillatory
integrals seems only be available in the general case. 
It should however be noted that, ideologically, the claims of
Theorems \ref{main2} and \ref{main3} can be
viewed as a blend of the embedding result in \cite{KM1} and
the well-known embedding theorems for anisotropic Sobolev
spaces, see \cite{S}, \cite{PS}.

It is natural to ask what happens if the assumption of Theorem \ref{main1}
is violated, that is, the $\Lambda$-senior parts of the differential
operators are proportional to some of them for an arbitrary choice of
the plane $\Lambda$. Here, for simplicity, we again restrict our
considerations to the case of $n=2$. 
We shall see that, under a certain ``ellipticity"
condition, the space $C^T(\mathbb{T}^2)$ is isomorphic in this situation
to $C(\mathbb{T}^2)$.\footnote{We recall that, by the Milyutin theorem, all
spaces $C(K)$ for $K$ compact metric and uncountable are mutually isomorphic.
So we can always talk about some fixed one, say, $C(\mathbb{T})$ in similar
situations} However, without this ellipticity condition nonisomorphism to
a complemented subspace of a $C(K)$-space may occur again, at least for
two different reasons. First, a change of variables may sometimes restore
the applicability of Theorem \ref{main1}.
Second, there is an effect of
somewhat different nature, related to the Cohen theorem on idempotents and
also leading to nonisomorphism.
This effect was first observed in \cite{M} in the case of dimension 3.

However, our analysis will still be not quite complete. The problems remaining are
rather of arithmetic nature. In particular, consider the collection of
two operators $T=\{\id, \partial_1+\sqrt{2}\partial_2\}$.
\textit{We do not know whether the space} $C^T(\mathbb{T}^2)$
\textit{embeds complementedly in a} $C(K)$-\textit{space}.

The paper is organized as follows. In \S\ref{S1}, we explain in detail what we know
if the assumption of Theorem \ref{main1} fails. In \S\ref{S2}, we prove Theorem \ref{main1}
on the basis of Theorem \ref{main2}. The arguments will be somewhat similar to
but more complicated than those in \cite{KM1}, \cite{KM2}. Finally,
in \S\ref{S3} we prove Theorem \ref{main2}. For that, we shall first establish
Theorem \ref{main3} (which will be space-consuming somewhat) and then reduce
Theorem \ref{main2} to it (which will not be immediate either). The proofs in \S\ref{S3}
are mainly due to the third author.

Finally, we address the reader to the monograph \cite{W} for the most part of the
Banach space theory stuff mentioned in what follows (the Milyutin theorem, $p$-absolutely
summing operators, the Grothendieck theorem, etc.).

\section{Beyond Theorem \ref{main1}}\label{S1}
\subsection{Statements}
As has already been mentioned, the ``nonisomorphism" Theorem \ref{main1}
reduces to dimension 2. In this section we aim at opposite results,
but we still restrict ourselves to dimension 2 for simplicity. As was observed
already in \cite{KM2} and \cite{M}, new subtleties may emerge in higher
dimensions.

For convenience, we restate
Theorem \ref{main1} for $n=2$. In this case the hyperplane $\Lambda$ is a straight line.
It is clear that, to verify the assumption of Theorem \ref{main1}, we
need not search through \textit{all possible} straight lines. Specifically,
consider the collection $\mathcal{M}=\{M_1,\dots,M_L\}$ of
all points on the plane whose coordinates are biindices of the differential
monomials involved in operators that determine the space in question;
then only those lines $\Lambda$
are interesting that pass through at least two points of this collection and,
surely, intersect the two positive coordinate semiaxes and have the property
that no point among the $M_s$, $s=1,\dots,L$, lies above $\Lambda$. Such a line
is said to be \textit{admissible}. Now, the desired statement looks
like this.
\begin{theorem}\label{main}
Suppose that, for at least one admissible line $\Lambda$, among the
$\Lambda$-senior parts of the operators in the collection $T=(T_1,\dots,T_l)$
there are at least two linearly independent. Then the bidual of
$C^T(\mathbb{T}^2)$ is not isomorphic to a complemented subspace of a
$C(K)$-space.
\end{theorem}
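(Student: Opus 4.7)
The plan is to combine Theorem \ref{main2} with Grothendieck's theorem on absolutely summing operators, following the general scheme of \cite{KM1}, \cite{KM2}. Suppose, toward a contradiction, that $(C^T(\mathbb{T}^2))^{**}$ were complemented in some $C(K)$. Given any bounded operator $U \colon C^T(\mathbb{T}^2) \to H$ into a Hilbert space, the bidual $U^{**}$ extends, via a projection $C(K) \to (C^T)^{**}$, to a bounded operator $C(K) \to H$, which is $2$-absolutely summing by Grothendieck; restricting back, $U$ itself would be $2$-summing. Hence it is enough to produce a bounded $U \colon C^T(\mathbb{T}^2) \to H$ that is \emph{not} $2$-summing. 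The Hilbert space $H$ will be a direct sum of copies of the Sobolev space in Theorem \ref{main2}.

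Before constructing $U$ I would perform algebraic reductions on $T$, using that $C^T$ is determined by the linear span of $\{T_1,\dots,T_l\}$. Rescale the admissible line $\Lambda$ so that its intercepts with the axes are lattice points $(k,0)$ and $(0,l)$, and pass to the coprime case $\gcd(k,l)=1$ (the general case reduces to this after factoring out the common divisor $d$ and handling $\Lambda$-homogeneous operators as polynomials in $\partial_1^{k/d}$ and $\partial_2^{l/d}$). Two linearly independent $\Lambda$-senior parts then become two polynomials in this basic coprime pair; elimination inside the linear span produces a ``chain'' of operators interpolating between $\partial_1^k$ and $\partial_2^l$, precisely those needed to absorb the right-hand sides of the system \textup{(\ref{syst})}.

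For $f \in C^T(\mathbb{T}^2)$ the functions $T_j f$ are continuous and thus define measures of norm $\lesssim \|f\|_{C^T}$. I would associate with $f$ proper distributions $\varphi_1(f),\dots,\varphi_N(f)$ on $\mathbb{T}^2$ --- obtained on the Fourier side by solving the system \textup{(\ref{syst})} with a specific choice of $\mu_j$, built as explicit linear combinations of the $T_i f$ plus junior remainders that are again continuous. Theorem \ref{main2} then yields
\[
\sum_{j=1}^{N}\|\varphi_j(f)\|_{W_2^{\frac{k-1}{2},\frac{l-1}{2}}(\mathbb{T}^2)} \leqslant C\,\|f\|_{C^T(\mathbb{T}^2)},
\]
so that $U(f) := (\varphi_1(f),\dots,\varphi_N(f))$ is a bounded operator into $H := \bigoplus_{j=1}^N W_2^{\frac{k-1}{2},\frac{l-1}{2}}(\mathbb{T}^2)$.

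To check that $U$ is not $2$-summing, I would test it on a sequence of trigonometric polynomials $f_M$ whose Fourier spectra concentrate along the anisotropic curve $|m|^{1/k}\sim |n|^{1/l}$, where the $\Lambda$-senior parts dominate the junior ones. For such inputs the Sobolev norms $\|U(f_M)\|_H$ grow at a computable rate, while the $C^T$-norms of random sign combinations $\sum_M \varepsilon_M f_M$ --- estimated by duality against Riesz-product-type or lacunary measures, in the spirit of \cite{KM1}, \cite{KM2} --- grow much more slowly, violating the Pietsch inequality and yielding the required contradiction. The main obstacle is the algebraic step of setting up the chain $(\varphi_j(f))$: one must arrange that all right-hand sides in \textup{(\ref{syst})} arise as explicit combinations of the continuous functions $T_i f$ (perhaps after discarding harmless non-proper Fourier modes), while simultaneously ensuring that the junior parts of the $T_i$, which are genuinely present, contribute only additional continuous summands to the $\mu_j$. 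This bookkeeping is substantially heavier than in the case $k=l=1$ of \cite{KM1}, \cite{KM2}, and is the reason Theorem \ref{main2} is a non-trivial strengthening of the embedding used there.
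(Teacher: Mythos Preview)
Your overall scheme --- combine Theorem \ref{main2} with a Grothendieck-type argument --- is the paper's scheme, but you have set it up on the wrong side of the duality, and this is not a cosmetic difference.

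The paper does \emph{not} construct an operator from $C^T(\mathbb{T}^2)$ to a Hilbert space. Instead it embeds $C_0^T$ isometrically into $C(\mathbb{T}^2)^l$ via $f\mapsto (T_1f,\dots,T_lf)$, lets $\mathcal{X}$ be the image, and works with the \emph{annihilator} $\mathcal{X}^\bot\subset M(\mathbb{T}^2)^l$. The standard $\mathcal{L}^1$-space argument shows that if $(C^T)^{**}$ were complemented in some $C(K)$ then $\mathcal{X}^\bot$ would be complemented in $M(\mathbb{T}^2)^l$, hence every bounded operator from $\mathcal{X}^\bot$ to a Hilbert space would be $2$-summing. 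The operator that is shown not to be $2$-summing sends a tuple $(\nu_1,\dots,\nu_l)\in\mathcal{X}^\bot$ to the solution $(\varphi_1,\dots,\varphi_N)$ of system \eqref{syst}, with the $\mu_j$ built from the $\nu_i$. The crucial point is that the annihilator relation $\sum_i T_i^*\nu_i=0$, after the junior parts are absorbed via the multipliers of Lemma \ref{L2}, becomes \emph{exactly} the solvability condition \eqref{annu} of Lemma \ref{ann}. That is where the compatibility needed for \eqref{syst} comes from.

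In your primal construction this source of compatibility is missing. Given $f\in C^T$, the functions $T_if$ are continuous but satisfy no linear differential relation among themselves; there is no reason any ``explicit linear combinations of the $T_if$'' should satisfy \eqref{annu}, and without it system \eqref{syst} has no solution. Conversely, if you try to define the $\varphi_j(f)$ directly as Fourier multipliers of $f$ and then read off the $\mu_j$ from \eqref{syst}, you must show each $\mu_j$ is a measure with norm controlled by $\|f\|_{C^T}$; in general only two of the $\Lambda$-senior monomials lie in the span of the $T_i$, so the intermediate $\mu_j$ will not be expressible in terms of the $T_if$. This is precisely why the paper passes to the dual side.

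Your testing step is also off. The paper does not use random signs or Riesz products: it exhibits explicit elements of $\mathcal{X}^\bot$ of the form $(z_1^pz_2^q\,d\lambda,\,c_{pq}z_1^pz_2^q\,d\lambda,\,0,\dots,0)$ with $(p,q)$ ranging over the anisotropic region $\tfrac{\delta}{2}q^l\le p^k\le\delta q^l$. Weak $2$-summability is then immediate (bounded orthogonal densities in $L^2(\lambda)$), and a direct Fourier calculation of $\varphi_{j_0}^{(p,q)}$ shows $\sum_{p,q}\|\varphi_{j_0}^{(p,q)}\|_W^2=\infty$. The delicate part is solving for the coefficients $c_{pq}$ so that the tuple really lies in $\mathcal{X}^\bot$ while $|c_{pq}|$ stays bounded; this is done by tracking the system \eqref{syst} explicitly (the paper's Cases 1 and 2) and using Lemma \ref{decay} on the junior-part multipliers. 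None of this has an analogue in your sketch.
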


In this section we are interested in the situation when the assumption of
the above theorem is violated. Let us understand what this means. There may be
either no or finitely many admissible lines. In the latter
case, a concave broken line\footnote{We mean that it
is the graph of a concave function.} $\mathcal{Z}$ arises
from them, with nodes
$Z_t = (x_t,y_t), t = 1,\dots,K$. 
We may and do assume that the $x_t$ decrease
and the $y_t$ increase monotonically with $t$. This means that the indices
increase if we move along $\mathcal{Z}$ counterclockwise. The admissible
line $\Lambda$ passing through $Z_t$ and $Z_{t+1}$ will be marked
with the index $t$. Then the $\Lambda_t$-senior part
of each $T_j$ is the linear combination (with the same coefficients as in $T_j$)
of the differential monomials involved in
$T_j$ and such that the points corresponding to their biindices lie on the segment
$[Z_t,Z_{t+1}]$. Note that, for every $t$, the monomial
$\partial_1^{x_t}\partial_2^{y_t}$ corresponding to $Z_t$ occurs obligatory in some
senior part.

It is convenient to extend the broken line $\mathcal{Z}$ by adding to it
two points $Z_0=(x_1, 0)$ and $Z_{K+1}=(0,y_K)$ and the corresponding
segments. Then the initial broken line will be referred to as the
\textit{core} of the extended one. Each of the two additional segments
may degenerate to a point;
otherwise the first of them is part of a vertical line and the second is
part of a horizontal line; naturally, these two lines \textit{are not regarded as admissible}.

If there are no admissible lines at all, the configuration described above turns into
a broken line with three nodes $M_0,M_1,M_2$; its core is reduced to the
point $M_1$, and the other two points are the projections of $M_1$ to the coordinate
axes. Note that in this case the differential monomial corresponding to
$M_1$ must occur with a nonzero coefficient in at least one $T_j$.

We define the \textit{principal part} of $T_j$ to be the linear combination, with
the same coefficients as in $T_j$, of the differential monomials corresponding to
the points that lie in the core of $\mathcal{Z}$. We omit the easy proof of the
following statement.
\begin{lemma}\label{pp}
If the assumption of Theorem \textup{\ref{main}} is violated, then all principal parts
of the operators $T_j$ are multiples of one.
\end{lemma}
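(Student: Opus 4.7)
The plan is to walk along the core broken line $\mathcal{Z}$ and use the failure of the hypothesis of Theorem~\ref{main} on each admissible segment to propagate a proportionality relation from one node to the next. Write each operator as $T_j=\sum_\alpha c_j^\alpha \partial^\alpha$. If there are no admissible lines, the core degenerates to the single point $M_1$, so every principal part is already a scalar multiple of $\partial^{M_1}$ and there is nothing to prove. Assume from now on that there are $K\geqslant 2$ core nodes $Z_1,\dots,Z_K$ with admissible segments $\Lambda_1,\dots,\Lambda_{K-1}$, where $\Lambda_t$ joins $Z_t$ to $Z_{t+1}$.

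By the negated hypothesis of Theorem~\ref{main}, for each $t$ there is a fixed operator $\sigma_t=\sum_{P} b_t^P\, \partial^P$ (summed over points $P\in\mathcal{M}$ lying on $[Z_t,Z_{t+1}]$) and scalars $a_1^t,\dots,a_l^t$ such that the $\Lambda_t$-senior part of each $T_j$ equals $a_j^t\sigma_t$. Reading off coefficients at the two endpoints of $\Lambda_t$ yields
\[
c_j^{Z_t}=a_j^t\, b_t^{Z_t},\qquad c_j^{Z_{t+1}}=a_j^t\, b_t^{Z_{t+1}}.
\]
The crucial technical point is that every node $Z_t$ actually belongs to $\mathcal{M}$, so there is some $j_0$ with $c_{j_0}^{Z_t}\neq 0$; consequently $b_t^{Z_t}\neq 0$ whenever $Z_t$ is an endpoint of $\Lambda_t$, and likewise $b_{t-1}^{Z_t}\neq 0$ whenever $Z_t$ is an endpoint of $\Lambda_{t-1}$.

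For each interior node $Z_t$ with $2\leqslant t\leqslant K-1$, equating the two expressions for $c_j^{Z_t}$ obtained from $\Lambda_{t-1}$ and $\Lambda_t$ gives $a_j^{t-1}\, b_{t-1}^{Z_t}=a_j^t\, b_t^{Z_t}$, i.e.\ $a_j^t=\rho_t\, a_j^{t-1}$ with $\rho_t:=b_{t-1}^{Z_t}/b_t^{Z_t}$ a scalar that does not depend on $j$. Iterating, $a_j^t=(\rho_2\cdots\rho_t)\, a_j^1$ for every $t$, so each coefficient $c_j^{Z_t}$ (extracted from $\Lambda_t$ for $t\leqslant K-1$ and from $\Lambda_{K-1}$ for $t=K$) is $a_j^1$ times a scalar depending only on $t$. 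Hence every principal part equals $a_j^1$ times one and the same fixed operator, as required. The only delicate point is the denominator bookkeeping---one must check that the $b_t^{Z_t}$ entering the ratios $\rho_t$ never vanish and treat the endpoint nodes $Z_1$ and $Z_K$ (each lying on only one admissible segment) separately---and both issues are disposed of by the observation that every node of $\mathcal{Z}$ is realised as a biindex in $\mathcal{M}$.
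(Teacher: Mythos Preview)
The paper omits the proof of this lemma, calling it easy, so there is nothing to compare against directly. Your argument is correct: the key observation is that the vectors $(a_j^t)_{j=1}^l$ for different $t$ are all proportional, the proportionality constant being fixed by matching coefficients at the shared node $Z_t$ of two consecutive admissible segments; the nonvanishing of $b_t^{Z_t}$ (needed for the ratios $\rho_t$) follows, as you note, from the fact that every node of $\mathcal{Z}$ belongs to $\mathcal{M}$.

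One small presentational point: after establishing $a_j^t=(\rho_2\cdots\rho_t)a_j^1$, you phrase the conclusion only in terms of the node coefficients $c_j^{Z_t}$, but the principal part also involves the intermediate integral points on each segment $[Z_t,Z_{t+1}]$. This is harmless, since the full $\Lambda_t$-senior part of $T_j$ is $a_j^t\sigma_t$, so the coefficient of $T_j$ at \emph{any} point $P$ on that segment is $a_j^t b_t^P=(\rho_2\cdots\rho_t)a_j^1\,b_t^P$; the principal part of $T_j$ is therefore $a_j^1$ times the fixed operator $\sum_t(\rho_2\cdots\rho_t)\sigma_t$ (with the obvious bookkeeping to avoid double-counting the nodes, which is consistent by the very relation defining $\rho_t$). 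It would be slightly cleaner to state the conclusion directly from the proportionality of the $a_j^t$ rather than via the node coefficients.
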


It has already been mentioned that the space $C^T(\mathbb{T}^2)$ is in
fact determined by the linear span of the collection $T$. Now, by the lemma,
if the assumption of Theorem \ref{main} fails, we can replace
the $T_j$ by their linear combinations (without changing the space)
in such a way that the only operator with nontrivial principal part
is $T_1$. If admissible lines do exist, we use the fact that
the monomial
corresponding to an arbitrary node $Z_k$ must occur in some senior part to
conclude that then the $\Lambda_t$-senior parts $S_t$ of $T_1$ are nonzero
for every $t$.  

Next, the characteristic polynomial $P_S$ of a differential operator
$S=\sum_k a_k\partial_1^{\alpha_k}\partial_2^{\beta_k}$ on the two-dimensional torus
is defined as follows:
\[
P_S(x,y)=\sum_k a_k(2\pi ix)^{\alpha_k}(2\pi iy)^{\beta_k}.
\] 
Now, if admissible lines exist, we want to impose some
\textit{ellipticity condition}. 

\begin{definition}\label{ellipticity}
We say that $S$ satisfies the ellipticity condition if
the characteristic polynomials of all senior parts $S_t$ do not
vanish on $\mathbb{R}^2$ except, possibly, for the coordinate axes.
\end{definition}

\begin{theorem}\label{suppl}
Suppose that the assumption of Theorem \textup{\ref{main}} is violated, at least one
admissible line exists and, after the modification of the collection $T$
described above, the ellipticity condition is fulfilled.
Then $C^T(\mathbb{T}^2)$ is isomorphic to $C(\mathbb{T}^2)$.
\end{theorem}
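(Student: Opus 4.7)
The plan is to reduce the $C^T$-norm (after the modification supplied by Lemma \ref{pp}) to the single norm $\|T_1 f\|_{C(\mathbb{T}^2)}$, to realise $C^T(\mathbb{T}^2)$ as a complemented subspace of $C(\mathbb{T}^2)$ via $T_1$, and then to conclude $C^T(\mathbb{T}^2)\cong C(\mathbb{T}^2)$ by the Pe{\l}czy\'nski decomposition method together with the Milyutin theorem.

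Using the modification described before Definition \ref{ellipticity}, I assume that only $T_1$ has a nontrivial principal part, with nonzero $\Lambda_t$-senior parts $S_t$ for every admissible line $\Lambda_t$, while each $T_j$ with $j\geqslant 2$ is a linear combination of monomials $\partial_1^a\partial_2^b$ whose biindex $(a,b)$ lies strictly below the broken line $\mathcal{Z}$. By Definition \ref{ellipticity}, $P_{S_t}$ does not vanish off the coordinate axes, and the quasihomogeneity of $S_t$ with respect to the anisotropic dilations leaving $\Lambda_t$ invariant yields pointwise lower bounds of the form $|P_{S_t}(\xi,\eta)|\gtrsim |\xi|^{x_t}|\eta|^{y_t}+|\xi|^{x_{t+1}}|\eta|^{y_{t+1}}$ on $\{\xi\eta\neq 0\}$. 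Pasting over $t$ and using that any biindex strictly below $\mathcal{Z}$ is coordinatewise dominated by a convex combination of neighbouring nodes $Z_t,Z_{t+1}$, one obtains $|P_{T_j}(\xi,\eta)|\lesssim |P_{T_1}(\xi,\eta)|$ for every $j\geqslant 2$, uniformly outside a bounded neighbourhood of the axes.

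The crucial analytical step is then to show that, after a finite-rank correction that absorbs the low-frequency zeros and regularises the symbol on the axes, the Fourier multiplier with symbol $m_j=P_{T_j}/P_{T_1}$ is the Fourier--Stieltjes transform of a finite measure on $\mathbb{T}^2$, hence defines a bounded operator $M_j$ on $C(\mathbb{T}^2)$. I would carry this out first on $\mathbb{R}^2$: the lower bounds of the previous paragraph together with anisotropic Mihlin--Marcinkiewicz-type differential estimates attached to each $\Lambda_t$ let one decompose $m_j$ into pieces adapted to each segment $[Z_t,Z_{t+1}]$ and express the inverse Fourier transform of every piece as a finite measure via parameter-dependent dilations; a transference argument then delivers the analogous bound on $\mathbb{T}^2$. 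Consequently $\|f\|_T$ becomes equivalent to $\|T_1 f\|_{C(\mathbb{T}^2)}$ modulo a finite-dimensional subspace. By the same circle of ideas, a smooth cut-off version of $1/P_{T_1}$ furnishes a bounded Fourier multiplier $Q$ on $C(\mathbb{T}^2)$, and $T_1\circ Q$ is a bounded projection of $C(\mathbb{T}^2)$ onto $T_1(C^T(\mathbb{T}^2))$, so that $C^T(\mathbb{T}^2)$ sits as an infinite-dimensional complemented subspace of $C(\mathbb{T}^2)$. To upgrade this to the desired isomorphism, I would produce a complemented copy of $C(\mathbb{T}^2)$ inside $C^T(\mathbb{T}^2)$ by convolution with a fixed smooth high-order kernel (which carries $C(\mathbb{T}^2)$ continuously into the common domain of all the $T_j$) and then apply Pe{\l}czy\'nski's decomposition method together with Milyutin's theorem. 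I expect the main obstacle to be the measure-kernel analysis just sketched: the presence of several admissible lines forces one to handle distinct anisotropies simultaneously, and the zeros of $P_{T_1}$ that the ellipticity condition \emph{does not} exclude on the coordinate axes mean that the relevant Fourier kernels must be recognised as finite signed measures rather than integrable functions.
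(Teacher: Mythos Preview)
Your overall architecture matches the paper's: reduce the norm on $C^T(\mathbb{T}^2)$ to $\|T_1 f\|_{C(\mathbb{T}^2)}$ by showing that each ratio $P_{T_j}/P_{T_1}$ gives a bounded Fourier multiplier on $C(\mathbb{T}^2)$, then identify $C^T$ (modulo a finite-dimensional piece and the functions supported on the coordinate axes of $\mathbb{Z}^2$) with a complemented subspace of $C(\mathbb{T}^2)$ via $T_1$. The paper does exactly this, through Proposition \ref{SecondT} and Lemma \ref{L1}, and the lower bound you state for $|P_{S_t}|$ is its Lemma \ref{BoundL}.

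The gap is in the multiplier step. You propose to obtain $C(\mathbb{T}^2)$-boundedness by ``anisotropic Mihlin--Marcinkiewicz-type differential estimates'' plus transference, and to ``express the inverse Fourier transform of every piece as a finite measure via parameter-dependent dilations.'' But Mihlin--Marcinkiewicz conditions yield only $L^p$-boundedness for $1<p<\infty$; they do not, by themselves, show that the symbol is a Fourier--Stieltjes transform, which is what boundedness on $C(\mathbb{T}^2)$ requires. Anisotropic degree-zero homogeneity plus smoothness away from the origin is likewise insufficient: such symbols generically have inverse Fourier transforms that are genuine singular distributions, not finite measures. The saving feature here is that subordinate monomials give symbols that \emph{decay} along every admissible direction, but turning that decay into an $L^1$ (or measure) kernel needs a concrete argument, and ``parameter-dependent dilations'' does not supply one.

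The paper avoids all of this by working directly on $\mathbb{Z}^2$ with a double Abel summation: one writes $D_\nu f=\sum_{m,n}S_{m,n}(f)\,\delta_x\delta_y\nu(m,n)$, uses the bound $\|S_{m,n}\|\lesssim\log(m{+}1)\log(n{+}1)$, and then shows
\[
\sum_{m,n}\log(m{+}1)\log(n{+}1)\,|\delta_x\delta_y\nu(m,n)|<\infty
\]
by combining the pointwise lower bound of Lemma \ref{BoundL} with a direct power-count for the second differences of $\nu=P_{T_j}/P_{R}$. This is elementary, stays on the torus (no transference), and as a bonus gives that the multiplier norm on $C_{M,0}$ tends to $0$ as $M\to\infty$, so the ``finite-rank correction'' you allude to is automatic. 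Your Pe{\l}czy\'nski--Milyutin closing step is fine but heavier than necessary: once $T_1$ is seen to be an isometry of $C_{M,0}^{\{R\}}$ \emph{onto} $C_{M,0}$ (surjectivity follows because $P_R$ has no zeros in $\mathbb{Z}^2$ outside a large square and off the axes), the isomorphism with $C(\mathbb{T}^2)$ is immediate.
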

Recall that, by the Milyutin theorem, here we might replace $C(\mathbb{T}^2)$
by $C(K)$ for any uncountable compact metric space $K$.
\begin{remark}\label{suppl1}
The arguments in the proof of Theorem \ref{suppl} (to be presented later)
can be adjusted to some situations where the ellipticity condition fails.
In particular, if there are no admissible lines, the space in question
is always isomorphic to a $C(K)$-space.
\end{remark}

\subsection{Still beyond}\label{sb}The proof of Theorem \ref{suppl} will also be postponed
slightly to discuss what can happen if neither it, nor Theorem \ref{main} is
applicable. We start with an example. Consider the space determined by two operators
$T_1=\partial_1^2+2\partial_1\partial_2+\partial_2^2$ and
$T_2=a\partial_1+b\partial_2$, where $a$ and $b$ are some complex numbers.
We recall that $\partial_1$ and $\partial_2$ applied to a function $f$
on the torus are the differentiations of $f(e^{2\pi i\theta_1}, e^{2\pi i\theta_2})$
with respect to $\theta_1$ and $\theta_2$.

We see that, directly, neither of the two statements mentioned above can
tell us anything about this space. (Clearly, the first operator is equal
to $(\partial_1+\partial_2)^2$ and is not elliptic in the above sense.)
But after the change of variables
$\theta_1=t_1+t_2$, $\theta_2=t_2$, the operators $T_1$ and $T_2$ turn into
$(\partial/\partial t_2)^2$ and $(a-b)\partial/\partial t_1 + b\partial/\partial t_2$,
respectively. Theorem \ref{main} is applicable to this new collection
if $a\ne b$ (consider the
straight line passing through the points $(1,0)$ and $(0,2)$); thus, the bidual of the space
in question does not embed complementedly in a $C(K)$-space. If $a=b$,
Remark \ref{suppl1} is applicable (alternatively, the reader may consult
\cite{KSi} in this case), and we have isomorphism to $C(K)$.

We can formalize the said above as follows. Suppose there is precisely
one admissible line $\Lambda$, and suppose it is parallel to the bisector of the second
and fourth quadrants. This means that we deal with the usual rather than a
mixed homogeneity pattern. As before, suppose that only one operator in
the collection (specifically, $T_1$) has nontrivial $\Lambda$-senior part
\begin{equation}\label{sen}
L=\sum_{k=0}^\mu a_k \partial^k_1 \partial^{\mu-k}_2. 
\end{equation}
All differential monomials occurring in $L$ are of order $\mu$. We assume that all
other differential monomials involved in at least one of the $T_j$ are of order
at most $\mu-p$ for some natural number $p$, and that a monomial of
order precisely $\mu-p$ occurs in at least one
operator other than $T_1$ (say, in $T_2$). Let
$\tilde{L}$ denote the part of $T_2$ consisting of the monomials of order
precisely $\mu-p$.

We assign the polynomial $P(t)=\sum_{k=0}^{\mu} a_k t^k$ to $L$ 
(then $(2\pi ix)^{\mu}P(y/x)$ is the characteristic polynomial of $L$), and we do
similarly with $\tilde{L}$, obtaining a polynomial $\tilde{P}(t)$. Next, suppose
$P(t)$ has a real rational root $q$ of multiplicity $\alpha$, and $q$ is
a root of $\tilde{P}(t)$ of multiplicity $\beta$ (the case where $\beta =0$
is not forbidden). Finally, suppose that
$\alpha >\beta +p$ (in particular, this inequality implies that $\alpha\geqslant 2$).

Now, after the change of variables $\theta_1=t_1-qt_2$, $\theta_2 =t_2$ (this rational
substitution should be understood properly on the torus, so as to yield
an isomorphic space of smooth
functions), the operator $\partial_2-q\partial_1$ turns into $\partial/\partial t_2$.
This implies that, in the new variables, the first $\alpha$ summands
will vanish in the expression like (\ref{sen}) for $L$ and the next one will be nonzero,
and the same will happen for $\tilde{L}$ with $\alpha$ replaced by $\beta$.
Then (again in the new variables) the straight line $\Lambda$ passing through the points
$(\beta,\mu-p-\beta)$ and $(\alpha, \mu-\alpha)$ is admissible and the new operators
$T_1$ and $T_2$ have linearly independent $\Lambda$-senior parts  

The above constructions leans upon the fact that a homogeneous polynomial in two
variables expands (over $\mathbb{C}$) in a product of linear factors $x-cy$. (Moreover,
it is required that some of the numbers ``$c$" be real and rational; we shall return
to this later.) In the case of mixed
homogeneity, the situation changes. We show what may happen by considering the space
of $C^T(\mathbb{T}^2)$ with a collection $T$ consisting of only one operator $S$.
The following proposition is nearly obvious. In fact, it is valid in
arbitrary dimension and was mentioned already in \cite{KM1}.   

\begin{proposition}\label{single}
Let $E$ be the set of all roots of the characteristic polynomial $P_S$ that belong
to $\mathbb{Z}^2$. Then the space $C^{\{S\}}(\mathbb{T}^2)$ is isomorphic to the subspace
$C_E(\mathbb{T}^2)$ of $C(\mathbb{T}^2)$ that consists of all functions whose Fourier
coefficients vanish on $E$.
\end{proposition}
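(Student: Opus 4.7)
The plan is to verify directly that the operator $S$ itself realises the desired isomorphism, by examining its action on the Fourier side. The key observation is that each exponential is an eigenfunction of $S$: one has $S\bigl(e^{2\pi i(m\theta_1+n\theta_2)}\bigr)=P_S(m,n)\,e^{2\pi i(m\theta_1+n\theta_2)}$, so for every trigonometric polynomial $f$,
\[
\widehat{Sf}(m,n)=P_S(m,n)\,\hat f(m,n).
\]
In particular $\widehat{Sf}$ vanishes on $E$, so $Sf\in C_E(\mathbb{T}^2)$.

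Next I would identify the null-space and the image of $f\mapsto Sf$ on trigonometric polynomials. The formula above shows that the seminorm $\|f\|_{\{S\}}:=\|Sf\|_{\infty}$ vanishes precisely when $\hat f$ is supported on $E$. The map therefore descends to the quotient of trigonometric polynomials by this null-space, where each class has a unique representative with spectrum disjoint from $E$; the induced map $f\mapsto Sf$ is then a linear bijection onto the subspace of trigonometric polynomials with spectrum disjoint from $E$ (with inverse given by division by $P_S$ on the Fourier side), and it is isometric when the target carries the uniform norm inherited from $C(\mathbb{T}^2)$.

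Finally, to pass to completions it remains only to verify that trigonometric polynomials with spectrum disjoint from $E$ are dense in $C_E(\mathbb{T}^2)$. This is routine: for any $g\in C_E(\mathbb{T}^2)$ the Fej\'er means $\sigma_N g$ are trigonometric polynomials whose Fourier coefficients are nonnegative scalar multiples of those of $g$, so $\widehat{\sigma_N g}$ still vanishes on $E$, while $\sigma_N g\to g$ uniformly. Hence the isometric bijection on polynomials extends by continuity to an isometric isomorphism $C^{\{S\}}(\mathbb{T}^2)\to C_E(\mathbb{T}^2)$. There is no serious obstacle here; the proposition expresses the essentially tautological fact that a single diagonal multiplier identifies the space determined by the range norm with (the closure of) its range.
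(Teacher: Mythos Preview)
Your argument is correct and is precisely the ``nearly obvious'' verification the paper alludes to without writing out: the map $f\mapsto Sf$ is an isometry by definition of the seminorm, its range on polynomials consists of polynomials with spectrum off $E$, and Fej\'er means give density in $C_E(\mathbb{T}^2)$. There is nothing to add.
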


Now, by a commonplace in the Banach space theory (see \cite{M} for more 
explanations), if the bidual of $C_E(\mathbb{T}^2)$ embeds complemendedly in a
$C(K)$-space, then the Fourier multiplier corresponding to the set $E$ is
bounded on $C(\mathbb{T}^2)$. By the celebrated Cohen idempotent theorem
(see, e.g., the monograph \cite{GM}), this happens if and only if $E$ belongs to
the \textit{coset ring} of the group $\mathbb{Z}^2$, i.e., to the ring of sets generated
by the cosets of all subgroups of this group.

Clearly, there are operators $S$ for which $E$ does not belong to the coset
ring of $\mathbb{Z}^2$. The simplest operator of this sort (by the way, it is
mixed homogeneous) is probably
\begin{equation}\label{primer}
S_0=2\pi i\partial_1-\partial_2^2.
\end{equation}
The fact
that an infinite subset of a parabola cannot lie in the coset ring is easy;
see, however, \cite{M} for a formal pattern proving statements like that.
We emphasize that such cases of nonisomorfism should
be viewed as ``accidental" in our context
because they are not quite relevant to the differential structure. 

However, we attract the reader's attention to the interesting
example for $\mathbb{T}^3$ presented in \cite{M}. Namely, this is the
operator $\partial_1^2+\partial_2^2-\partial_3^2$, which is homogeneous
in the usual sense. The set $E$ for it consists of the Pithagorean triples
and, surely, does not belong to the coset ring of $\mathbb{Z}^3$.

It should be noted that we do not know anything about the isomorphic
type of the space generated by two operators $\id$ and $S_0$, where $S_0$
is given by (\ref{primer}). But a more challenging example is the pair
$\{ \id$, $\partial_1-\sqrt 2\partial_2 \}$. Taken alone, the second operator
of this pair generates a space isomorphic to $C(\mathbb{T}^2)$
by Proposition \ref{single}. However, it does not satisfy the ellipticity
condition discussed above (that condition required the absence of
\textit{real} and not merely \textit{integral} roots for the characteristic
polynomial). So, Theorem \ref{suppl} tells nothing about the
couple in question, nor does Theorem \ref{main}. A rational-linear change of
variables (as discussed at the beginning of this subsection) also
cannot help in this case.

\subsection{Proof of Theorem \ref{suppl}}
We address the reader to the beginning of this section to recall certain
definitions and some notation. We repeat that the differential polynomials
$T_1$,\dots,$T_l$ are composed of differential monomials that correspond
to integral points lying within the domain in the first quarter under
a concave broken line $\mathcal{Z}$. The end-points of $\mathcal{Z}$
are situated on the positive semiaxes, and the nodes of it
are denoted by $Z_j=(x_j,y_j)$, $j=0\dots,K+1$. The part
of the broken line $\mathcal{Z}$ between $Z_1$ and $Z_K$ is its core.
Only $T_1$ involves differential monomials corresponding to points
in the core of $\mathcal{Z}$. They constitute the principal part $R$ of
$T_1$:
\begin{equation}\label{type}
R = \sum\limits_{j=1}^K a_j\partial_1^{x_j}\partial_2^{y_j} +
\sum\limits_{j=0}^{K}\sum\limits_{k = 1}^{\varkappa_j - 1} b_{jk}\partial_1^{x_{j} + k\frac{x_{j+1} -
x_j}{\varkappa_j}}\partial_2^{y_{j} + k\frac{y_{j+1} - y_j}{\varkappa_j}}.
\end{equation}
Here $\varkappa_j$ is the number of subintervals into which the segment
$Z_jZ_{j+1}$ is split by points with integral coordinates (note that, for fixed $j$,
these subintervals are of equal length). The first sum
involves precisely the monomials corresponding to the vertices of
$\mathcal{Z}$ (surely, the auxiliary vertices $Z_0$ and $Z_{K+1}$ are
not taken into account if
they differ from $Z_1$ and $Z_K$). By the above discussion,
$a_j\ne 0$ for $j=1,\dots,K$.

Next, we recall that $\Lambda_j$ is the (admissible) line passing
through $Z_j$ and $Z_{j+1}$, $j=1,\dots,K$.
The $\Lambda_j$-principal part of $T_1$ is then given by
\begin{equation}
S_j = a_j\partial_1^{x_j}\partial_2^{y_j} +
a_{j+1}\partial_1^{x_{j+1}}\partial_2^{y_{j+1}} +
\sum\limits_{k = 1}^{\varkappa_j - 1} b_{jk}\partial_1^{x_{j} +
k\frac{x_{j+1} - x_j}{\varkappa_j}}\partial_2^{y_{j} +
k\frac{y_{j+1} - y_j}{\varkappa_j}}.
\end{equation}

All differential monomials that correspond to points within the
domain bounded by $\mathcal{Z}$ and the segments $OZ_0$ and $Z_{K+1}O$ (here
$O$ denotes the origin) but not lying in the core of $\mathcal{Z}$ are said to
be \textit{subordinate} to the operator $R$. We shall use the same term for
arbitrary linear combinations of such monomials. Now, Theorem \ref{suppl}
can be restated as follows.

\begin{proposition}\label{MainT}
Suppose that $R$ satisfies the ellipticity condition \textup{(}see Definition
\textup{\ref{ellipticity})} and $r_0,r_1, \dots, r_K$ are some
operators subordinate to $R$. Then the space
$C^{\{R + r_0, r_1, \dots, r_K\}}(\mathbb{T}^2)$ embeds
complementedly in $C(K)$.
\end{proposition}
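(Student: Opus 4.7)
The plan is to exhibit $C^T(\mathbb{T}^2) = C^{\{R+r_0, r_1, \ldots, r_K\}}(\mathbb{T}^2)$ as a complemented subspace of $C(\mathbb{T}^2)^{K+1}$ (which is a $C(K)$-space by the Milyutin theorem) via the canonical isometric embedding
\[
\Phi\colon f\mapsto \bigl((R+r_0)f,\, r_1 f,\, \ldots,\, r_K f\bigr).
\]
It suffices to produce a bounded left inverse $B\colon C(\mathbb{T}^2)^{K+1}\to C^T(\mathbb{T}^2)$ with $B\circ\Phi = I$; then $P = \Phi\circ B$ is idempotent with range $\Phi(C^T)$. I will look for $B$ in the simplest form $B(g_0, g_1, \ldots, g_K) = \mu\ast g_0$, where $\mu$ is a finite Borel measure on $\mathbb{T}^2$ serving as a parametrix for $R+r_0$.

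As a preliminary reduction, split $C(\mathbb{T}^2)$ into its proper subspace and the subspace of functions whose Fourier coefficients are supported on the coordinate axes of $\mathbb{Z}^2$; the corresponding projection is bounded. On the axis part, each $T_j$ restricts to a constant-coefficient one-variable differential operator, and the resulting univariate $C^T$-space is classically isomorphic to a $C(\mathbb{T})$. The nontrivial work is therefore on the proper part, where I need a finite measure $\mu$ with
\[
\hat\mu(m,n)\cdot P_{R+r_0}(m,n) = 1\qquad \text{for all } m,n \in \mathbb{Z}\setminus\{0\},
\]
and, moreover, the convolutions $r_j\ast\mu$ should again be finite measures for $j=1,\ldots,K$, so that $B$ actually lands in $C^T(\mathbb{T}^2)$.

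The construction of $\mu$ is where the ellipticity hypothesis is used. Combined with the mixed homogeneity of each senior part $S_t$ on its admissible segment $\Lambda_t$, ellipticity yields quantitative lower bounds for $|P_R|$ off the coordinate axes of $\mathbb{R}^2$, with $P_R$ dominating like $P_{S_t}$ in the cone associated to $\Lambda_t$. Using a smooth partition of unity adapted to these cones, the symbol $1/P_R$ splits into finitely many pieces, each equivalent to the inverse symbol of a single mixed-homogeneous elliptic operator $S_t$. Each piece is realized on $\mathbb{R}^2$ as the Fourier transform of the fundamental solution of $S_t$ (with appropriate cutoffs), and periodization via Poisson summation assembles the desired torus measure. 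The subordinate perturbation $r_0$ is strictly lower order with respect to every admissible weighting and is absorbed by a convergent Neumann-type series; the regularity of each symbol $P_{r_j}/P_R$ ensures that $r_j\ast\mu$ is again a finite measure; and finitely many lattice zeros of $P_{R+r_0}$ in the proper region are corrected by modifying $\hat\mu$ at those points, which does not affect the measure norm.

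The principal technical obstacle is verifying that the fundamental solutions of the senior parts $S_t$ on $\mathbb{R}^2$ are in fact locally integrable near the origin, i.e., that they are $L^1_{\mathrm{loc}}$ functions rather than genuinely singular distributions. This is a mixed-homogeneous version of the integrability of the classical two-dimensional Newton kernel: it relates the anisotropic order of $S_t$ to the weights $a_1^{(t)}, a_2^{(t)}$ defining $\Lambda_t$ and relies on the quantitative lower bound on $|P_{S_t}|$ provided by ellipticity. Once this integrability is established uniformly over $t$, the pieces combine into the measure $\mu$, the operator $B$ is bounded from $C(\mathbb{T}^2)^{K+1}$ into $C^T(\mathbb{T}^2)$, and $P = \Phi B$ realizes the required complemented embedding.
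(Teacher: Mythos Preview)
Your overall strategy is sound and, once the technical core is supplied, it coincides with the paper's argument. In both approaches everything reduces to the single assertion that the sequence $1/P_{R}(m,n)$ (on the proper lattice, outside a finite set) and, more generally, $P_r(m,n)/P_R(m,n)$ for subordinate $r$, are Fourier coefficients of finite measures on $\mathbb{T}^2$; equivalently, they are bounded Fourier multipliers on $C_0(\mathbb{T}^2)$ and $L^1_0(\mathbb{T}^2)$. Given this, your left inverse $B(g_0,\dots,g_K)=\mu\ast g_0$ does exactly what the paper's isomorphism $f\mapsto Rf$ from $C_{M,0}^{\{R\}}$ onto $C_{M,0}$ does, read backwards. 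The paper also uses the $\varepsilon$-smallness of the subordinate multipliers (Proposition~\ref{SecondT}) to pass from $R$ to $R+r_0$, which is your Neumann series.

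The gap is in how you propose to prove this multiplier fact. You locate the ``principal technical obstacle'' in the local integrability near the origin of the fundamental solutions $E_t$ of the senior parts $S_t$ on $\mathbb{R}^2$, and then invoke ``periodization via Poisson summation''. But local integrability of $E_t$ (which does hold, by anisotropic homogeneity) is not the issue: periodization $\sum_{(p,q)\in\mathbb{Z}^2}E_t(\,\cdot\,-(p,q))$ requires global $L^1$-control, and the high-frequency piece of $E_t$ is \emph{not} in $L^1(\mathbb{R}^2)$ in general (already for the Laplacian one gets a kernel decaying only like $|x|^{-1}$). So the passage ``$L^1_{\mathrm{loc}}$ fundamental solution $\Rightarrow$ finite torus measure'' is not justified, and this is precisely where the real work lies.

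The paper bypasses the $\mathbb{R}^2$ detour entirely. It proves the multiplier bound (Lemma~\ref{L1}) by Abel summation in both variables, reducing matters to showing that the second differences $\delta_x\delta_y\bigl((2\pi im)^\alpha(2\pi in)^\beta/P_R(m,n)\bigr)$ are summable against $\log(m{+}1)\log(n{+}1)$. That, in turn, rests on the quantitative lower bound $|m|^{x_j}|n|^{y_j}\le C\,|P_R(m,n)|$ for each core node $Z_j$ (Lemma~\ref{BoundL}), obtained by partitioning the lattice into anisotropic sectors $\Omega_j$ and transition strips $w_{j,j+1}$ and using the ellipticity of each $S_t$ on its own strip. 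This discrete argument is the genuine technical core; your partition-of-unity into cones is its continuous analogue, but the step from cone-wise symbol control to an $L^1(\mathbb{R}^2)$ kernel (and thence to a torus measure) is exactly what remains to be done in your sketch.
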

\begin{remark}
In fact, standard arguments show that in our case the space is isomorphic
to $C(\mathbb{T})$. We shall not dwell on this.
\end{remark}
We shall deduce the proposition from the fact that the contribution of the $r_j$ to the norm
is negligible compared to that of $R$. In fact, this is true on a subspace of finite codimension.

For a natural number $M$, we denote by $C_M(\mathbb{T}^2)$ the subspace of $C(\mathbb{T}^2)$
consisting of all functions $f$ such that
\[
\hat{f}(m,n) = 0 \quad \mbox{for} \quad |m| \leqslant M, |n| \leqslant M.
\]
Next, we denote by $C_{M,0}$ the space of proper continuous functions satisfying the
same condition.\footnote{Recall that a function $h$ is proper if $\hat{h}(s,t)=0$ if
either $s=0$ or $t=0$.}

Clearly, $C_M(\mathbb{T}^2)$ has finite codimension in $C(\mathbb{T}^2)$ and $C_{M,0}$ has
finite codimension in the space of proper continuous functions. Similarly,
we can define the space $C_{M,0}^{\{T_1,T_2,\dots,T_n\}}(\mathbb{T}^2)$, again by imposing
the above conditions on Fourier coefficients. This subspace is easily seen to be complemented
in $C_M^{\{T_1,T_2,\dots,T_n\}}(\mathbb{T}^2)$. As a complement, we can take the sum of
a finite-dimensional space and the space $X$ of functions depending on only one
of two variables. It is easy to realize that $X$ is isomorphic to $C(\mathbb{T})$.
Thus, it suffices to show that
$C_{M,0}^{\{T_1,T_2,\dots,T_n\}}(\mathbb{T}^2)$ embeds complementedly in a $C(K)$-space.

Now, we state the required quantitative result.
\begin{proposition}\label{SecondT}
Suppose $R$ obeys the ellipticity condition and $r$ is subordinate to $R$.
Then for every $\varepsilon > 0$ there exists a natural number $M$ such that
\begin{equation}\label{subordconst}
\|rf\|_{C(\mathbb{T}^2)} \leqslant \varepsilon\|Rf\|_{C(\mathbb{T}^2)}
\end{equation}
for all $f\in C_{M,0}$.
\end{proposition}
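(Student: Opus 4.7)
Setting $g=Rf$, the proper‑frequency identity
\[
\widehat{rf}(m,n)=\frac{P_r(m,n)}{P_R(m,n)}\,\widehat g(m,n)
\]
reduces the claim to an operator‑norm estimate: the Fourier multiplier with symbol $\mathfrak m(\xi,\eta):=P_r(\xi,\eta)/P_R(\xi,\eta)$, restricted to $E_M:=\{(m,n)\in\mathbb Z^2:mn\ne 0,\ |m|,|n|>M\}$, must act on $C(\mathbb T^2)$ with norm $\leqslant\varepsilon$. Since multipliers on $C(\mathbb T^2)$ coincide with Fourier transforms of bounded measures, the task is to realize $\mathfrak m|_{E_M}=\widehat\mu|_{E_M}$ with $\|\mu\|<\varepsilon$, provided $M$ is large.

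The first step is pointwise: show $|\mathfrak m(\xi,\eta)|\leqslant CM^{-\delta}$ on $E_M$ for some $\delta=\delta(r,R)>0$. For each admissible segment $\Lambda_t$, factor out the common monomial,
\[
P_{S_t}(\xi,\eta)=(2\pi i\xi)^{x_{t+1}}(2\pi i\eta)^{y_t}\,\tilde P_t\bigl((2\pi i\xi)^{p_t},(2\pi i\eta)^{q_t}\bigr),
\]
with $p_t=(x_t-x_{t+1})/\varkappa_t$ and $q_t=(y_{t+1}-y_t)/\varkappa_t$, so that ellipticity of $S_t$ is precisely the non‑vanishing of $\tilde P_t$ on the relevant real lines; a compactness argument on the mixed‑homogeneous unit sphere then yields
\[
|P_{S_t}(\xi,\eta)|\geqslant c_t\bigl(|\xi|^{x_t}|\eta|^{y_t}+|\xi|^{x_{t+1}}|\eta|^{y_{t+1}}\bigr)\qquad(\xi\eta\ne 0).
\]
Summing over $t$ and absorbing the junior cross terms at large $|\xi|,|\eta|$ gives $|P_R(\xi,\eta)|\geqslant c\max_j|\xi|^{x_j}|\eta|^{y_j}$ on $E_{M_0}$. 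On the other hand, since every biindex $(\alpha,\beta)$ appearing in $r$ lies in the region bounded by the extended broken line $\mathcal Z$ and the axes but off the core, a direct Newton‑polygon comparison yields $|\xi|^\alpha|\eta|^\beta\leqslant C(\min(|\xi|,|\eta|))^{-\delta}\max_j|\xi|^{x_j}|\eta|^{y_j}$ for $|\xi|,|\eta|\geqslant 1$, with $\delta>0$ measuring the gap between $(\alpha,\beta)$ and $\mathcal Z$. Combining these gives the claimed pointwise bound.

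The second and main step lifts this pointwise smallness to a multiplier norm on $C(\mathbb T^2)$. Pointwise control alone is insufficient: for instance the absolute sum $\sum_{E_M}|\mathfrak m|$ generally diverges. The plan is to build a smooth extension $\tilde{\mathfrak m}$ of $\mathfrak m$ to $\mathbb R^2$, vanishing near the coordinate axes and the origin, and to decompose $\tilde{\mathfrak m}=\sum_{t,j}\tilde{\mathfrak m}_{t,j}$ by an anisotropic Littlewood--Paley partition: inside the frequency sector in which $S_t$ is the dominant senior part of $R$, the pieces $\tilde{\mathfrak m}_{t,j}$ live on dyadic shells for the mixed‑homogeneous dilation $(\xi,\eta)\mapsto(\lambda^{B_t}\xi,\lambda^{A_t}\eta)$ under which $P_{S_t}$ is covariant. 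After rescaling to unit size each $\tilde{\mathfrak m}_{t,j}$ is smooth with all $C^N$‑norms controlled by $CM^{-\delta}\lambda_j^{-\kappa}$ (the extra decay $\lambda_j^{-\kappa}$ coming from the strict Newton‑polygon inequality between $r$ and $R$), so its inverse Fourier transform has $L^1(\mathbb R^2)$‑norm of the same size; anisotropic dilation preserves $L^1$, and summing the dyadic geometric series (together with the finite sum in $t$) gives $\|\mathcal F^{-1}\tilde{\mathfrak m}\|_{L^1}\leqslant CM^{-\delta}$. By de Leeuw's periodization principle the restriction of $\tilde{\mathfrak m}$ to $\mathbb Z^2$ is the Fourier transform of a measure $\mu$ on $\mathbb T^2$ of total variation $\leqslant CM^{-\delta}$, and for $M$ sufficiently large $CM^{-\delta}<\varepsilon$.

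\textbf{Main obstacle.} The crux is the anisotropic multi‑scale analysis in the second step: several admissible lines produce several distinct mixed‑homogeneity patterns that must all be accommodated within a single coherent Littlewood--Paley decomposition, with careful bookkeeping across the transition regions between consecutive sectors. Once the decomposition and the attendant symbol‑regularity bounds are in place, the $L^1$‑control on each piece, the summation of the geometric series, and the transference step are all fairly standard.
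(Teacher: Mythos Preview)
Your reduction to a Fourier-multiplier estimate and the lower bound $|P_R(\xi,\eta)|\geqslant c\max_j|\xi|^{x_j}|\eta|^{y_j}$ for $\max(|\xi|,|\eta|)$ large are both correct and match the paper. The gap is in Step~2, and it stems from a misreading of the space $C_{M,0}$. By the paper's definition, $f\in C_{M,0}$ means $\hat f$ vanishes on the axes and on the \emph{square} $[-M,M]^2$; the Fourier support is therefore $F_M=\{(m,n):mn\ne0,\ \max(|m|,|n|)>M\}$, not your $E_M=\{(m,n):mn\ne0,\ \min(|m|,|n|)>M\}$. On $F_M$ the pointwise bound $|\mathfrak m|\leqslant CM^{-\delta}$ is false in general. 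Take a subordinate monomial whose biindex lies on the horizontal extension $[Z_{K+1},Z_K]$ of $\mathcal Z$, say $(\alpha,\beta)=(0,y_K)$ when $x_K>0$: at the points $(1,n)\in F_M$ one has $|P_R(1,n)|\asymp n^{y_K}$ and $|P_r(1,n)|\asymp n^{y_K}$, so $|\mathfrak m(1,n)|\asymp 1$ for all $n$. The symbol is therefore not uniformly small on $F_M$, the claimed $C^N$ bounds $CM^{-\delta}\lambda_j^{-\kappa}$ on the Littlewood--Paley pieces supported in the strips near the axes cannot hold, and the conclusion $\|\mathcal F^{-1}\tilde{\mathfrak m}\|_{L^1}\leqslant CM^{-\delta}$ does not follow.

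The paper bypasses this with a much more elementary device: double Abel summation. In one quadrant it writes $D_\nu f=\sum_{m,n\geqslant0}S_{m,n}(f)\,\delta_x\delta_y\nu(m,n)$, where $S_{m,n}$ are the rectangular partial Fourier sums; for $f\in C_{M,0}$ one has $S_{m,n}(f)=0$ whenever both $m\leqslant M$ and $n\leqslant M$, so the sum effectively runs over $\max(m,n)>M$. Since $\|S_{m,n}\|\asymp\log(m{+}1)\log(n{+}1)$, the task reduces to the absolute convergence of $\sum_{m,n\geqslant0}\log(m{+}1)\log(n{+}1)\,|\delta_x\delta_y\nu(m,n)|$; its tail over $\max(m,n)>M$ then tends to $0$ automatically. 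The second differences $\delta_x\delta_y\nu$ are controlled via the mean-value theorem and the lower bound on $|P_R|$, and the strict Newton-polygon gap between $(\alpha,\beta)$ and the core of $\mathcal Z$ yields a power saving that absorbs the logarithms. Crucially, this argument never requires $\nu$ itself to be small on the strips near the axes --- only its second differences need to be summable there, and they are.
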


We postpone slightly the proof of this statement to deduce
Proposition \ref{MainT} from it. It suffices to show that
$C_{M,0}^{\{R+r_0,r_1,\dots,r_K\}}(\mathbb{T}^2)$ is isomorphic to $C_{M,0}$
for large $M$, because the latter space is complemented in $C(\mathbb{T}^2)$.
But theorem \ref{SecondT} implies that, for large $M$, the norms of the spaces
$C_{M,0}^{\{R+r_0,r_1,\dots,r_K\}}(\mathbb{T}^2)$ and $C_{M,0}^{\{R\}}(\mathbb{T}^2)$
are equivalent on the set of proper trigonometric polynomials in two variables,
which is dense in each of these spaces.
So, we must establish isomorphism between $C_{M,0}^{\{R\}}(\mathbb{T}^2)$ and
$C_{M,0}$. But, plainly, the mapping $f\mapsto Rf$ from the first space to the second
is an isometry onto its image. So, it suffices to show that this image is dense
in the second space, and for this it suffices to verify that the characteristic polynomial
of $R$ does not vanish at any point outside the square $[-M,M]^2$, except for those
on coordinate axes, provided $M$ is sufficiently large. (Indeed, then all proper
trigonometric polynomials with spectrum outside this square lie in the image of
$R$.)

However, this again follows from Proposition \ref{SecondT} applied to
the operator $\id$ (the differential monomial corresponding to the origin).

Now, we start the proof of Proposition \ref{SecondT}. It is an immediate consequence of the
following  multiplier lemma.
\begin{lemma}\label{L1}
Suppose that the monomial $\partial_1^{\alpha}\partial_2^{\beta}$
is subordinate to the operator $R$. Then the Fourier
multiplier whose symbol is given by $\mu (m,n)=0$ if $m=0$, or $n=0$, or $P_R(m,n) = 0$, and otherwise
\begin{equation}
\mu (m,n) = \frac{(2\pi im)^{\alpha}(2\pi in)^{\beta}}{P_R(m,n)}
\end{equation}
is bounded from $L^1_0(\mathbb{T}^2)$ into itself and from $C_0(\mathbb{T}^2)$
into itself. Moreover, the norm of its restriction to $C_{M,0}$ \textup{(}and to the space
of functions in $L^1$ whose Fourier coefficients vanish on
$[-M,M]^2$\textup{)} tends to zero as $M \rightarrow \infty$.
\end{lemma}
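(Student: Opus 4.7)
The plan is to realize $\mu$ as the Fourier--Stieltjes transform of a finite measure on $\mathbb{T}^2$ by decomposing $\mu$ into anisotropic dyadic pieces adapted to the Newton polygon of $R$, and then summing a geometric series coming from the fact that $(\alpha,\beta)$ lies strictly below the core of $\mathcal{Z}$. First I would cover the part of $\mathbb{Z}^2$ off the axes by cones $\Gamma_1,\dots,\Gamma_{K-1}$ together with two cap cones, so that on $\Gamma_j$ the behavior of $P_R$ is governed by the $\Lambda_j$-senior part $S_j$. Writing the admissible line $\Lambda_j$ as $x/A_j + y/B_j = 1$ and using the associated quasi-norm $\rho_j(m,n) = |m|^{1/B_j}+|n|^{1/A_j}$, the polynomial $P_{S_j}$ is anisotropically homogeneous of degree $A_j B_j$ in this scaling, while every other monomial of $R$ contributes a term with strictly smaller $\rho_j$-weight. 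A smooth anisotropic partition of unity then yields $\mu = \mu_0 + \sum_j \sum_{N\geqslant N_0}\mu_{j,N}$, with $\mu_0$ supported on a bounded set (where $P_R$ has no zeros away from the axes) and $\mu_{j,N}$ supported in $\Gamma_j\cap\{\rho_j\sim 2^N\}$.

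Second, the ellipticity hypothesis applied on the $\rho_j$-unit quasi-sphere, combined with compactness, gives $|P_{S_j}(m,n)|\gtrsim \rho_j(m,n)^{A_j B_j}$ away from the axes; the other monomials of $R$ have strictly smaller $\rho_j$-degree, so the same bound holds for $|P_R|$ on $\Gamma_j\cap\{\rho_j\sim 2^N\}$ once $N$ is large enough. The subordination of $(\alpha,\beta)$ means $B_j\alpha + A_j\beta < A_j B_j$ for every $j$, so the numerator scales as $\rho_j^{B_j\alpha+A_j\beta}$ and one obtains $|\mu_{j,N}(m,n)|\lesssim 2^{-N\delta_j}$ with $\delta_j>0$, together with the corresponding anisotropic derivative bounds. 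Integration by parts against the characters $e^{2\pi i(mt_1+nt_2)}$ then shows that the inverse Fourier transform $K_{j,N}$ on $\mathbb{T}^2$ is concentrated, with rapid decay, on the dual anisotropic box $\{|t_1|\lesssim 2^{-NB_j},\,|t_2|\lesssim 2^{-NA_j}\}$, and satisfies $\|K_{j,N}\|_{L^1(\mathbb{T}^2)}\lesssim 2^{-N\delta_j}$.

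Summing the geometric series in $N$ and over the finitely many $j$ produces an integrable kernel whose Fourier series is $\mu$; convolution with this kernel yields the boundedness on $L^1_0(\mathbb{T}^2)$ and, by the same argument applied pointwise to continuous functions, on $C_0(\mathbb{T}^2)$. For the final assertion, restricting attention to frequencies outside $[-M,M]^2$ discards all shells of index $N\lesssim\log M$, so the $L^1$-norm of the restricted kernel is bounded by the tail $\sum_{N\gtrsim\log M}2^{-N\delta_j}$, which tends to zero as $M\to\infty$.

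The main obstacle is handling the overlap of adjacent cones $\Gamma_j$ and $\Gamma_{j+1}$, where two senior parts coexist: one needs the uniform lower bound on $|P_R|$ across the transition, which I would extract from the concavity of $\mathcal{Z}$ and from the fact that the vertex monomial $a_{j+1}\partial_1^{x_{j+1}}\partial_2^{y_{j+1}}$ belongs to both $S_j$ and $S_{j+1}$ with nonzero coefficient. A secondary subtlety is the treatment of the two cap segments of $\mathcal{Z}$, which are not admissible but give rise to single-monomial ``senior parts'' $\partial_1^{x_1}\partial_2^{y_1}$ and $\partial_1^{x_K}\partial_2^{y_K}$, trivially nonzero off the axes; here the vanishing of $\mu$ on the coordinate axes (equivalently, the restriction to proper functions in the definition of $L^1_0$ and $C_0$) is exactly what prevents the division from blowing up.
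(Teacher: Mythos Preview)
Your approach is correct and genuinely different from the paper's. The paper does not build an $L^1$ kernel via anisotropic Littlewood--Paley pieces; instead it splits $\mu$ into the four quadrants, applies double Abel summation to write the multiplier on a trigonometric polynomial as $\sum_{m,n}S_{m,n}(f)\,\delta_x\delta_y\nu(m,n)$ with $S_{m,n}$ the rectangular partial sums, and then proves the absolute convergence of $\sum_{m,n}\log(m{+}1)\log(n{+}1)\,|\delta_x\delta_y\nu(m,n)|$. The second difference is bounded by the mixed second partial of a smooth extension of $\nu$, and the resulting series of inverse powers is shown to converge (with margin absorbing the logarithms) using exactly the same geometric input you isolate: a pointwise lower bound $|m|^{x_j}|n|^{y_j}\leqslant C|P_R(m,n)|$ at each vertex $Z_j$ of the core (the paper's Lemma~\ref{BoundL}), proved by partitioning the complement of a large square into regions $\Omega_j$ (where one vertex monomial dominates) and transition strips $w_{j,j+1}$ (where the $\Lambda_j$-senior part dominates and ellipticity is invoked). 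Your cones $\Gamma_j$ and their overlaps play the same role as the $w_{j,j+1}$ and $\Omega_j$. The paper's route is more elementary in that it needs only one mixed second difference rather than enough derivatives to integrate by parts, and it stays entirely on $\mathbb{Z}^2$; your route is more structural, would extend more naturally to $L^p$, and makes the geometric decay $2^{-N\delta_j}$ explicit. One small imprecision in your last paragraph: restriction to spectra outside $[-M,M]^2$ does not literally discard the intermediate shells that straddle the boundary of the square, but since shells entirely contained in $[-M,M]^2$ contribute nothing to the restricted operator you may drop them for free, and the surviving shells all have index $\gtrsim\log M$, so the tail bound goes through as you state.
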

\begin{remark}
It will be clear form the sequel that $P_R$ does not vanish outside $[-M,M]^2$ if $M$ is sufficiently large.
\end{remark}
We remind the reader that $P_R$ is the characteristic polynomial of $R$ (in other
words, this is its symbol if we describe the action of $R$ in terms of Fourier transforms).
Next, as usual in this paper, the subscript ``$0$" in the notation for a function class
means the subspace of proper functions in this class.

This lemma is rather standard modulo the following crucial estimate;
by the way, the estimate implies that
the denominator in the above formula does not vanish outside
the square $[-M,M]^2$ and the coordinate axes.

\begin{lemma}\label{BoundL}
If $R$ obeys the ellipticity condition, then for every node $Z_j = (x_j,y_j)$,
$j=1,\dots,N$, of the broken line $\mathcal{Z}$ we have 
\begin{equation}\label{bound}
|m|^{x_j}|n|^{y_j} \leqslant C |P_R(m,n)|
\end{equation}
for all {\bf real} $m$ and $n$ with $\max\{|m|,|n|\}$ sufficiently large, where
$C$ is independent of $m$ and $n$.
\end{lemma}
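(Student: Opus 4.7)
Plan.

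My plan is a covering-and-scaling argument built on the Newton-polygon structure of $R$. I will partition the region $\{(m,n)\in\mathbb{R}^2:\max(|m|,|n|)\geq R_0\}$ into pieces, each governed either by an edge of the broken line $\mathcal{Z}$ (including its two coordinate-aligned boundary segments $Z_0Z_1$ and $Z_KZ_{K+1}$) or by a single core vertex, and in each piece extract a lower bound for $|P_R|$ either from the ellipticity of the corresponding edge senior part or from single-monomial dominance.

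By the four-quadrant symmetry (only overall phases change) and the fact that every biindex of $\supp(R)$ has strictly positive components, it is enough to treat $m,n>0$ with $\max(m,n)$ large. For each admissible edge $\Sigma_t$ of $\mathcal{Z}$ with equation $\alpha/a_t+\beta/b_t=1$, consider the wedge
\[
\Omega_t^\delta=\{(m,n):m,n>0,\ \delta\leq nm^{-a_t/b_t}\leq\delta^{-1}\}
\]
and the change of variables $(m,n)=(\mu^{b_t},\mu^{a_t}\sigma)$ with $\sigma\in[\delta,\delta^{-1}]$. The senior part $S_t$ is quasi-homogeneous of degree $a_tb_t$ under this scaling, while every monomial of $R$ corresponding to a point strictly below $\Sigma_t$ contributes a strictly lower power of $\mu$; hence
\[
P_R(\mu^{b_t},\mu^{a_t}\sigma)=\mu^{a_tb_t}\bigl[P_{S_t}(1,\sigma)+O(\mu^{-\varepsilon_t})\bigr]
\]
uniformly in $\sigma\in[\delta,\delta^{-1}]$, where $\varepsilon_t>0$ is the gap to the nearest support point off $\Sigma_t$. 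Ellipticity of $S_t$ and compactness give $|P_{S_t}(1,\sigma)|\geq c(\delta)>0$ on $[\delta,\delta^{-1}]$, so $|P_R(m,n)|\gtrsim\mu^{a_tb_t}$ for $\mu$ sufficiently large. Since each node $Z_j=(x_j,y_j)$ lies on or below $\Sigma_t$, $b_tx_j+a_ty_j\leq a_tb_t$, and thus $|m|^{x_j}|n|^{y_j}\lesssim\mu^{a_tb_t}\lesssim|P_R(m,n)|$ on $\Omega_t^\delta$. The two boundary edges are handled by the same argument with a degenerate scaling: for $Z_0Z_1$, fix $n\in[\delta,\delta^{-1}]$ and let $m\to\infty$, using that the edge senior part $S_0$ of $R$ on $Z_0Z_1$ has the form $(2\pi im)^{x_1}Q_0(n)$ with $Q_0$ bounded below on $[\delta,\delta^{-1}]$.

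In the complement of all these wedges, for suitable choices of the parameters $\delta_t$, the point $(m,n)$ lies in the interior of the normal cone of some core vertex $Z_j$ with a positive angular margin. This margin implies $(\alpha-x_j)\log m+(\beta-y_j)\log n\leq-\eta(\log m+\log n)$ for every other point $(\alpha,\beta)\in\supp(R)$ and a uniform $\eta>0$. Consequently $m^\alpha n^\beta=O(\max(m,n)^{-\eta})\cdot m^{x_j}n^{y_j}$, the monomial $a_j(2\pi im)^{x_j}(2\pi in)^{y_j}$ dominates $P_R$, and $|P_R(m,n)|\gtrsim m^{x_j}n^{y_j}$. The bound for any other node $Z_k$ then follows because $m^{x_k}n^{y_k}\leq m^{x_j}n^{y_j}$ throughout the normal cone of $Z_j$.

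The main obstacle is the combinatorial matching at the overlaps: the wedge widths $\delta_t$ and the angular margins of the interior vertex cones must be chosen in a coordinated way so that their union exhausts $\{\max(m,n)\geq R_0\}$ and all the ellipticity constants remain uniform at the boundaries between pieces. A related subtle point is the handling of the two boundary segments $Z_0Z_1$ and $Z_KZ_{K+1}$, where the above scheme forces one to read the ellipticity hypothesis as a nonvanishing condition on the characteristic polynomials of the edge senior parts of $R$ for \emph{all} edges of $\mathcal{Z}$, including the vertical and horizontal ones; otherwise the transverse coefficient polynomials at $Z_1$ or $Z_K$ could acquire real nonzero roots and the asymptotic domination would break down on the corresponding narrow strips.
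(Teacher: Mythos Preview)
Your plan coincides with the paper's: split the complement of a large square into edge sectors (the paper's $w_{j,j+1}$), where ellipticity of the $\Lambda_j$-senior part $S_j$ gives a lower bound via the quasi-homogeneous scaling you describe, and vertex sectors (the paper's $\Omega_j$), where the single node monomial at $Z_j$ dominates every other monomial of $R$; the coordination of the widths $\gamma_{j,j+1}$ (your $\delta_t$) and of the threshold $M$ is carried out exactly as you anticipate. Your worry about the auxiliary segments $Z_0Z_1$ and $Z_KZ_{K+1}$ is unfounded: by definition $R$ is the \emph{principal part} of $T_1$ and its support lies entirely in the core of $\mathcal{Z}$ (the arc from $Z_1$ to $Z_K$), so the only monomial of $R$ with first exponent $x_1$ is the vertex monomial at $Z_1$ itself---there is no transverse polynomial $Q_0(n)$ that could acquire a real zero, and the strip near the $x$-axis is simply absorbed into the extreme vertex sector $\Omega_1$ (and symmetrically for $Z_K$), with no extra ellipticity hypothesis required.
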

(Note that, naturally, we do not claim anything about the auxiliary nodes
$Z_0$ and $Z_{N+1}$.)

Now we show how to deduce Lemma \ref{L1} from Lemma \ref{BoundL}. We split the multiplier
into 4 pieces corresponding to the 4 coordinate quarters. It suffices to prove the boundedness
of each of these pieces separately. Since they are similar, we consider only
the multiplier $D_{\nu}$,
where $\nu (m,n)=\mu (m,n)$ if $m,n>0$ and $\nu (m,n)=0$ otherwise. Next,
clearly it suffices to estimate $D_{\nu}$ on the set of proper trigonometric polynomials
$f$ in two variables. Then the sum in the following formula is in fact finite:
\[
D_{\nu}(f)(x,y) = \sum_{m,n\geqslant 0}\nu (m,n)\hat{f}(m,n)e^{2\pi i(mx+ny)}.
\]
Summation by parts consecutively in the first and the second variable yields
\begin{equation}\label{abel}
D_{\nu}(f)(x,y) = \sum_{m,n\geqslant 0}S_{m,n}(f)\delta_x\delta_y(\nu)(m,n).
\end{equation}
Here $S_{m,n}(f)=\sum_{0\leqslant k\leqslant m}\sum_{0\leqslant j\leqslant n}
\hat f(k,j)z_1^k z_2^j$ is a partial sum of the Fourier series of $f$ and
$\delta_x$ and $\delta_y$ are the standard difference operators in the
first and the second variable. (Note that, in general, additional terms
arise under summation by parts. Here they vanish because $\nu (m,n)=0$ if
$m=0$ or $n=0$.)

Since the norm of the operator $S_{m,n}$ (not matter whether
it is considered on $C(\mathbb{T}^2)$ or on $L_1(\mathbb{T}^2)$)
is of order of $\log (m+1)\log (n+1)$ for $m,n\geqslant 0$, we see that it suffices
to prove that the series
\[
\sum_{m,n\geqslant 0}\log (m+1)\log (n+1)|\delta_x\delta_y(\nu)(m,n)|
\]
converges absolutely and its sum tends to $0$ as $M\rightarrow\infty$. (Recall that
$S_{m,n}(f)=0$ if $m$ and $n$ are smaller than $M$, so that summation in \eqref{abel}
is in fact over the indices satisfying $\max (|m|,|n|)\geqslant M$.)

The reason why this claim is true is that, when dropping the logarithmic
factors, we basically obtain an (absolutely) convergent series of inverse powers, so that
logarithms cannot make it divergent. Let us enter in the details. 

Clearly, $|\delta_x\delta_y(\nu)(m,n)|\leqslant\max_{m\leqslant s\leqslant m+1,0\leqslant t\leqslant n+1}
|\frac{\partial^2}{\partial_s\partial_t}\nu (s,t)|$. Calculating the above partial
derivative in accordance with the usual differentiation rules,
we obtain the expression $\frac{Q}{P_R^4}$, where $Q$ is some polynomial.
Let $s^pt^q$ be any nonzero monomial occurring in $Q$, and let
\begin{equation}\label{one}
\frac xa+\frac yb=1
\end{equation}
be the equation of some admissible line. It is easy to realize that then
\begin{equation}\label{four}
\frac{p+1}{a}+\frac{q+1}{b}<4.
\end{equation}
Indeed, the exponents of the monomial $(2\pi is)^{\alpha}(2\pi it)^{\beta}$
satisfy $\frac{\alpha}{a}+\frac{\beta}{b}<1$, the exponents $u,v$ of any monomial
occurring in $P_R$ satisfy $\frac{u}{a}+\frac{v}{b}\leqslant 1$, and, altogether, we
multiply four such factors ($(2\pi is)^{\alpha}(2\pi it)^{\beta}$ is involved obligatory)
but differentiate one of them in $s$ and one (which may or may not be the same)
in $t$ when forming a monomial in the numerator.

We estimate from above the quantity $|\frac{s^pt^q}{P_R(s,t)}|$ for $s\in [m,m+1]$,
$t\in [n,n+1]$. Consider the point $C=\left(\frac{p+1}{4}, \frac{q+1}{4}\right)$, then
it lies within the domain bounded by the broken line $\mathcal{Z}$ and two segments
of the coordinate axes. We claim that there exist two neighboring nodes
$Z_j=(x_j,y_j)$ and $Z_{j+1}=(x_{j+1},y_{j+1})$ of $\mathcal{Z}$, $j=1,\dots,K$ with
$\frac{p+1}{4} < x_j$ and $\frac{q+1}{4} < y_{j+1}$.

Indeed, this is clear
if $C$ lies on $\mathcal{Z}$ (but, surely, not in its core). Otherwise, consider
the lines $x=\frac{p+1}{4}$ and $y=\frac{q+1}{4}$. They hit $\mathcal{Z}$ at two points $A$ and
$B$. If the part of $\mathcal{Z}$ between $A$ and $B$ contains a link or is contained
within one link of $\mathcal{Z}$, then
the endpoints of this link fit. If this part contains precisely one node of $\mathcal{Z}$, this
node can be taken for one of the above two vertices, and either the preceding or the next node
can be taken for the other. We shall assume that the line \eqref{one} passes precisely through
these two nodes.

Now, applying Lemma \ref{BoundL}, we find
\begin{equation}\label{init}
\left|\frac{s^pt^q}{P_R(s,t)}\right|\leqslant C\frac{s^pt^q}{(s^{x_j}t^{y_j}
+ s^{x_{j+1}}t^{y_{j+1}})^4}.
\end{equation}
Next, let $\gamma >0$ be the difference
between $4$ and the left-hand side of (\ref{four}).
The quantities on the right in \eqref{init} should be multiplied by
$\log (s+1)\log (t+1)$ and then summed, and
we must show that the sum over $(s,t)\notin [-M,M]^2$ tends to $0$ as $M\rightarrow\infty$.
However, we take the liberty to act as if the logarithmic factors were absent. Let us sum the
above quantities over the pairs $(s,t)$ satisfying $s^{x_j}t^{y_j}\leqslant s^{x_{j+1}}t^{y_{j+1}}$
(the case of the opposite inequality is treated similarly); this condition means in fact that
$t\geqslant s^{\frac ab}$. So, the sum in question is dominated termwise by the double series
\[
\sum_s\left(\sum_{t\geqslant s^{a/b}}t^{q-4y_{j+1}}\right)s^{p-4x_{j+1}}.
\]
By the above discussion, $q-4y_{j+1}<-1$, whence the inner sum over $t$ is dominated by
$s^{\frac ab (q-4y_{j+1}+1)}$. Thus, we arrive at the series $\sum_s s^{\rho}$, where
\begin{multline*}
\rho=p-4x_{j+1}+\frac ab \left(q-4y_{j+1}+1\right)= \\
a\left(\frac pa -4\frac{x_{j+1}}{a}+\frac qb-4\frac{y_{j+1}}{b}+\frac 1b\right)=
a\left(\frac{p+1}{a}+\frac{q+1}{b}-4\left[\frac{x_{j+1}}{a}+\frac{y_{j+1}}{b}\right]-\frac 1a\right) \\
=-1-a\gamma.
\end{multline*}
Clearly, when we incorporate the
omitted logarithmic factors, we still can dominate the resulting series by a convergent series of
inverse powers with slightly smaller exponent.

Now, since the series converges, its sum over $(s,t)\notin [-M,M]^2$ tends to $0$, and we
are done.

It remains to prove Lemma \ref{BoundL}.

Consider two differential monomials involved in $R$ and corresponding to
the nodes $Z_i$ and $Z_j$ in the core of the broken line $\mathcal{Z}$,
that is, the monomials $\partial_1^{x_i}\partial_2^{y_i}$
and $\partial_1^{x_j}\partial_2^{y_j}$. We want to compare the magnitudes of
their characteristic polynomials. These polynomials (in fact, monomials)
are equal in the absolute value on the set where $|2\pi m|^{x_i}|2\pi n|^{y_i} = |2\pi m|^{x_j}|2\pi n|^{y_j}$,
which can be rewritten as $|2\pi m|^{c_{ij}}|2\pi n|^{-d_{ij}} = 1$, where $c_{ij}=x_i-x_j$
and $d_{ij}=y_i-y_j$. Next, their moduli are comparable on the set 
\begin{equation}
w_{ij}=\{(m,n)\in\mathbb{Z}^2\setminus [-M,M]^2 \colon \gamma_{i,j}\leqslant |m|^{c_{ij}}|n|^{-d_{ij}}\leqslant
\gamma_{i,j}^{-1}\},
\end{equation}
where $\gamma_{i,j}$ is a constant not exceeding $1$ and to be fixed later.
Naturally, $w_{ij}$ splits in four pieces (four combinations of pluses and minuses are possible
when we lift the modulus signs).
Also, $\mathbb{R}^2 \setminus [-M,M]^2$ becomes split into four
parts, each containing a ray of a coordinate axis.
In the parts containing rays of the $x$-axis, the monomial with smaller index dominates the other
(that is, for $i<j$, the algebraic monomial corresponding to $Z_i$
dominates the one liked with $Z_j$), and the opposite is true in two other
parts.

The sets $w_{ij}$ are symmetric with respect to each coordinate axis. Therefore, it suffices
to understand how they are situated relative to each other in the first quarter.
We say that a point $P \in \mathbb{R}^2$ is greater than another point $Q$
if their first coordinates are equal, they lie in one and the same quarter,
and the second coordinate of $P$ has larger absolute value than the second coordinate
of $Q$. 

\begin{proposition}
For every collection $\gamma_{i,j}$, $\gamma_{i,j} \in (0,1)$, there exists $M$ so large
that every point of $w_{ij}$ is greater than some point of $w_{kl}$, whenever $i \geqslant k, j > l$.
\end{proposition}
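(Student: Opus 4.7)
The plan is to transfer the problem to logarithmic coordinates and reduce it to a strict monotonicity property of slopes associated to $\mathcal{Z}$. By the $m\mapsto -m$, $n\mapsto -n$ symmetries built into the definition of $w_{ij}$, I would restrict to the first quadrant and set $u = \log m$, $v = \log n$ with $m, n \geqslant 2$. The defining inequality becomes $|c_{ij}u - d_{ij}v| \leqslant \log(1/\gamma_{i,j})$, so $w_{ij}$ is (modulo the integrality constraint) a discrete subset of a band of fixed log-width around the line $v = s_{ij}u$, where
\[
s_{ij} = \frac{x_i - x_j}{y_j - y_i}\qquad (i \ne j)
\]
is a symmetric, positive quantity.

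The crux is strict monotonicity of $s_{ij}$ in each of its (unordered) indices. Note that $s_{ij}$ is the negative reciprocal of the secant slope $\sigma_{ij} = (y_i - y_j)/(x_i - x_j)$ of $\mathcal{Z}$ viewed as the graph of a concave function $y = f(x)$. Since $\mathcal{Z}$ is strictly concave (consecutive segments have distinct slopes), the classical chord-slope property yields that $\sigma_{ij}$ strictly increases whenever either of $x_i$ or $x_j$ decreases --- equivalently, whenever either index $i$ or $j$ is increased --- and hence so does $s_{ij}$. Under the hypotheses $i \geqslant k$ and $j > l$, I would route from $(k,l)$ to $(i,j)$ through the intermediate pair $(k,j)$ to obtain $s_{kl} < s_{kj} \leqslant s_{ij}$, the first inequality being strict because $j > l$; in the degenerate case $k = j$ one instead routes through $(i,l)$, yielding $s_{kl} = s_{jl} < s_{il} < s_{ij}$.

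Set $\delta := s_{ij} - s_{kl} > 0$. At first coordinate $m$, the part of $w_{ij}$ at this $m$ has its $\log n$-values confined to an interval $[s_{ij}\log m - B_{ij},\, s_{ij}\log m + B_{ij}]$, with $B_{ij}$ depending only on $\gamma_{i,j}$ and the exponents; similarly for $w_{kl}$ with constant $B_{kl}$. For $\log m > (B_{ij} - B_{kl})/\delta$, the lower endpoint of the $w_{ij}$-band strictly exceeds that of the $w_{kl}$-band, the gap behaving like $\delta \log m$. Moreover the $w_{kl}$-band at first coordinate $m$ has length in the $n$-variable of order $m^{s_{kl}}$, so for $m$ large it contains an integer $n'$ with $\log n'$ below the minimal $\log n$ attained on $w_{ij}\big|_m$. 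Since $|m| \geqslant M$ automatically places $(m, n')$ outside $[-M, M]^2$, choosing $M$ large enough to handle the finitely many admissible quadruples $(i,j,k,l)$ simultaneously completes the argument. The main hurdle is the monotonicity statement for $s_{ij}$, which, once the secant-slope reformulation is in place, follows immediately from the strict concavity of $\mathcal{Z}$.
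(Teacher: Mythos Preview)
Your argument is correct and follows the same route as the paper: both proofs reduce to the first quadrant, observe that on $w_{ij}$ one has $n \asymp m^{s_{ij}}$ with $s_{ij}=c_{ij}/(-d_{ij})$, and then invoke concavity of $\mathcal{Z}$ to get the strict inequality $s_{kl}<s_{ij}$. The paper's proof is a three-line sketch of exactly this; your version simply fills in the secant-slope monotonicity and the integer-point existence that the paper leaves implicit.
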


\begin{proof}
It suffices to verify this in the first quarter.
But on $w_{ij}$ we have $n \asymp m^{\frac{c_{ij}}{d_{ij}}}$, whereas on $w_{kl}$
we have $n \asymp m^{\frac{c_{kl}}{d_{kl}}}$. It remains to observe that
$\frac{c_{kl}}{d_{kl}} < \frac{c_{ij}}{d_{ij}}$ because the broken line $\mathcal{Z}$
is concave.
\end{proof}

Now consider the domains $w_{j,j+1}$, which ``separate" the monomials corresponding
to $Z_j$ and $Z_{j+1}$. Whatever the collection $\gamma_{j,j+1}$ is, there exists a large $M$
such that, outside $[-M,M]^2$, the sets $w_{j,j+1}$ are mutually disjoint and follow one
after another in the counterclockwise order when $j$ increases. We denote the set  
between $w_{j-1,j}$ and $w_{j,j+1}$ by $\Omega_j$.
In terms of inequalities, the definition of $\Omega_j$ looks like this:
\begin{multline}\label{Omega}
\Omega_{j} = \{(m,n)\notin [-M,M]^2 \colon \gamma_{j-1,j}\geqslant |m|^{c_{j-1,j}}|n|^{-d_{j-1,j}},\\
|m|^{c_{j,j+1}}|n|^{-d_{j,j+1}}\geqslant \gamma_{j,j+1}^{-1}\}.
\end{multline}
Now, $\Omega_j$ is the set where the monomial corresponding to $Z_j$ dominates
the other monomials. Here is the precise statement.

\begin{proposition}
Let $\partial_1^x\partial_2^y$ be a monomial involved in $R$ but not corresponding
to $Z_j$ \textup{(}it may correspond either to some other node or to a
point on some segment of the broken line\textup{)}, and let $\lambda$ be a positive number.
If $\gamma_{j-1,j}$ and $\gamma_{j,j+1}$ are sufficiently small and $M$ is sufficiently
large, we have
\begin{equation}\label{omegaBound}
|m|^{x_j}|n|^{y_j} \geqslant \lambda|m|^x|n|^y\quad\hbox{for}\quad (m,n) \in \Omega_j.
\end{equation}
\end{proposition}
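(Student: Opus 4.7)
The plan is to linearize by passing to logarithmic variables $u = \log|m|$, $v = \log|n|$ (by the evident symmetry among quadrants it suffices to work in the first quadrant). Setting $\delta = (x, y) - Z_j$, the conclusion to prove becomes $\langle -\delta, (u, v)\rangle \geqslant \log \lambda$, while the defining inequalities of $\Omega_j$ say precisely that the monomial attached to $Z_j$ dominates the ones attached to its two neighboring nodes on $\mathcal{Z}$:
\[
\langle Z_j - Z_{j-1}, (u, v)\rangle \geqslant N_1 := \log \gamma_{j-1,j}^{-1}, \qquad \langle Z_j - Z_{j+1}, (u, v)\rangle \geqslant N_2 := \log \gamma_{j,j+1}^{-1},
\]
where $N_1$ and $N_2$ can be made arbitrarily large by shrinking the $\gamma$'s.

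The core step is a geometric lemma that uses the concavity of $\mathcal{Z}$ in an essential way. Writing $\Lambda_{j-1}\colon \alpha_{j-1} x + \beta_{j-1} y = 1$ and $\Lambda_j\colon \alpha_j x + \beta_j y = 1$ with positive coefficients, concavity of $\mathcal{Z}$ implies that both admissible lines are \emph{supporting} lines of the hypograph at $Z_j$; hence every lattice point $(x, y)$ indexing a monomial of $R$ lies in both closed half-planes $\alpha_{j-1} x + \beta_{j-1} y \leqslant 1$ and $\alpha_j x + \beta_j y \leqslant 1$. Subtracting the identities $\alpha_k x_j + \beta_k y_j = 1$ for $k = j-1, j$ translates these to
\[
\alpha_{j-1}\delta_1 + \beta_{j-1}\delta_2 \leqslant 0, \qquad \alpha_j\delta_1 + \beta_j\delta_2 \leqslant 0.
\]
The edge vectors $\vec{e}_{j-1, j} := Z_{j-1} - Z_j$ and $-\vec{e}_{j, j+1} := Z_{j+1} - Z_j$ lie along $\Lambda_{j-1}$ and $\Lambda_j$ respectively, are linearly independent by concavity, and each saturates one of these inequalities while satisfying the other strictly. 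A two-dimensional Farkas-type argument then shows that they generate the full feasibility cone, so that one may write uniquely
\[
\delta = \alpha\, \vec{e}_{j-1, j} + \beta\, (-\vec{e}_{j, j+1}), \qquad \alpha, \beta \geqslant 0,
\]
with $\alpha, \beta$ depending only on $(x, y)$.

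Substituting this decomposition into the target gives
\[
\langle -\delta, (u, v)\rangle = \alpha \langle Z_j - Z_{j-1}, (u, v)\rangle + \beta \langle Z_j - Z_{j+1}, (u, v)\rangle \geqslant \alpha N_1 + \beta N_2.
\]
Linear independence forces $(\alpha, \beta) = (0, 0)$ only when $\delta = 0$, which is excluded; since $R$ involves only finitely many monomials, there is a uniform positive lower bound $\varepsilon$ on $\alpha + \beta$ over the relevant $(x, y)$'s, so $\langle -\delta, (u, v)\rangle \geqslant \varepsilon \min(N_1, N_2)$, which can be driven above $\log \lambda$ by choosing $\gamma_{j-1, j}$ and $\gamma_{j, j+1}$ small enough; taking $M$ large simultaneously guarantees that $\Omega_j$ lies in the range of $(u, v)$ for which these logarithmic estimates are meaningful. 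The boundary cases $j = 1$ or $j = K$, where one of $Z_{j\pm 1}$ is the auxiliary vertex on a coordinate axis, are handled by the same mechanism with the corresponding admissible line replaced by that axis. I anticipate that the main technical subtlety is the Farkas identification of the feasibility cone with the conical hull of $\vec{e}_{j-1, j}$ and $-\vec{e}_{j, j+1}$: this is where the concavity of $\mathcal{Z}$ truly enters and must be used carefully.
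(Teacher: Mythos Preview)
Your argument is correct and is organized differently from the paper's. Both proofs pass to logarithmic variables and reduce the claim to the linear inequality $(x_j-x)\log|m|+(y_j-y)\log|n|\geqslant\log\lambda$ on the polyhedral region $\Omega_j$. The paper then splits into two cases according to whether $(x,y)$ has smaller or larger argument than $Z_j$; in the first case it uses only the $\gamma_{j-1,j}$ constraint to eliminate $\log|n|$ in favor of $\log|m|$, verifies via concavity that the resulting coefficient of $\log|m|$ is nonnegative, and reads off a lower bound proportional to $-\log\gamma_{j-1,j}$ (the other case is symmetric and uses $\gamma_{j,j+1}$). Your cone decomposition $\delta=\alpha(Z_{j-1}-Z_j)+\beta(Z_{j+1}-Z_j)$ with $\alpha,\beta\geqslant 0$ is a neat repackaging: it handles both cases simultaneously and makes the role of concavity (the two admissible lines are supporting lines at $Z_j$, so the feasibility cone is exactly the conical hull of the two edge vectors) completely transparent. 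The price is that you need both $\gamma$'s small at once, whereas the paper's case split shows that for each individual $(x,y)$ only one of them matters; since the proposition allows both to be shrunk, this costs nothing.

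One place that deserves a little more than you gave it is the boundary case $j\in\{1,K\}$. There $\Omega_j$ carries only one $w$-constraint, so only one of $N_1,N_2$ is available. Your decomposition still yields, say for $j=1$, $\langle-\delta,(u,v)\rangle=\alpha y_1 v+\beta\langle Z_1-Z_2,(u,v)\rangle$ with $\beta>0$ strictly (because every other core point has $x<x_1$). On the integer lattice $v=\log|n|\geqslant 0$, so $\alpha y_1 v\geqslant 0$ and $\beta N_2$ does the job. For the real-variable version needed later in Lemma~\ref{BoundL} one must also cover points of $\Omega_1$ with $|n|<1$; there $(m,n)\notin[-M,M]^2$ forces $|m|>M$, and since $x_1-x>0$ and $y_1-y\leqslant 0$ one gets $\langle-\delta,(u,v)\rangle\geqslant(x_1-x)\log M$, which is where the ``$M$ sufficiently large'' clause actually earns its keep. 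This is a small patch, not a gap in the idea.
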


\begin{proof}
We rewrite the inequality to be verified:
\[
(x_j - x) \log|m| + (y_j - y)\log|n| \geqslant \log\lambda.
\]
The point $(\log|m|,\log|n|)$ lies in the domain
\begin{multline*}
\{(x,y) \notin [-\log M,\log M]^2 \colon\\ \log\gamma_{j-1,j} \geqslant c_{j-1,j}u - d_{j-1,j}v;\quad
c_{j,j+1}u - d_{j,j+1}v \geqslant -\log\gamma_{j,j+1}\}.
\end{multline*}
Consider the case where $(x,y)$ has smaller argument than $Z_j$,
the symmetric case is treated similarly. Since the broken line $\mathcal{Z}$
is concave, we have $\frac{x_j - x}{y_j - y} + \frac{c_{j-1,j}}{d_{j-1,j}} \geqslant 0$.
The definition of $\Omega_j$ shows that
\begin{equation*}
\log|n| \geqslant \frac{-\log\gamma_{j-1,j} + c_{j-1,j}\log|m| }{d_{j-1,j}},
\end{equation*}
that is,
\begin{align}
\label{nonnegative}(x_j - x) \log|m|& + (y_j - y)\log|n|\geqslant\\
(x_j - x) \log|m|& + (y_j - y)\frac{-\log\gamma_{j-1,j} + c_{j-1,j}\log|m| }{d_{j-1,j}}\notag\\
\geqslant (y_j-y)\frac{-\log\gamma_{j-1,j}}{d_{j-1,j}},\notag
\end{align}
because the coefficient of $\log|m|$ is nonnegative. Taking $\gamma_{j-1,j}$
sufficiently small, we can make this quantity greater than $\log\lambda$.
\end{proof}

This proposition allows us to prove inequality \eqref{bound} on $\Omega_j$
if $\gamma_{j,j-1}$ is small.
Indeed, recalling formula \eqref{type} for $R$, we see that we must prove the inequality
\begin{multline*}
|2\pi m|^{x_i}|2\pi n|^{y_i} \leqslant\\ C \left|\sum\limits_{\theta =1}^K a_{\theta}(2\pi im)^{x_{\theta}}
(2\pi in)^{y_{\theta}} +
\sum\limits_{\theta =0}^{K}\sum\limits_{k = 1}^{\varkappa_{\theta} - 1}
b_{\theta k}(2\pi im)^{x_{\theta} + k\frac{x_{\theta +1} - x_{\theta}}{\varkappa_{\theta}}}(2\pi in)^{y_{\theta}
+ k\frac{y_{\theta +1} - y_{\theta}}{\varkappa_{\theta}}}\right|.
\end{multline*}
By \eqref{omegaBound}, on $\Omega_j$ we can replace the monomial $|m|^{x_i}|n|^{y_i}$ on the left
by $|m|^{x_j}|n|^{y_j}$. Now, we choose $\lambda$ greater than
$a_j^{-1}2K\max_{\theta,k}\varkappa_{\theta}\max(|a_{\theta}|,|b_{\theta k}|)$,
after which we choose the numbers ``$\gamma$"
so as that to ensure \eqref{omegaBound} with this $\lambda$ for all monomials involved in $R$.
Then
\begin{multline*}
\left|\sum\limits_{\theta=1}^K a_{\theta}(2\pi im)^{x_{\theta}}(2\pi in)^{y_{\theta}} +
\sum\limits_{\theta=0}^{K}\sum\limits_{k = 1}^{\varkappa_{\theta} - 1} b_{\theta k}(2\pi im)^{x_{\theta}
+ k\frac{x_{\theta+1} -
x_{\theta}}{\varkappa_{\theta}}}(2\pi in)^{y_{\theta} + k\frac{y_{\theta+1} -
y_{\theta}}{\varkappa_{\theta}}}\right|\geqslant\\
|a_j(2 \pi i m)^{x_j}(2\pi in)^{y_j}| -\\ \left|\sum\limits_{\theta=1, \theta
\ne j}^K a_{\theta}(2\pi im)^{x_{\theta}}(2 \pi i n)^{y_{\theta}} +
\sum\limits_{\theta=0}^{K}\sum\limits_{k = 1}^{\varkappa_{\theta} - 1} b_{\theta k}(2\pi im)^{x_{\theta} +
k\frac{x_{\theta+1} - x_{\theta}}{\varkappa_{\theta}}}(2\pi in)^{y_{\theta} + k\frac{y_{\theta+1} -
y_{\theta}}{\varkappa_{\theta}}}\right|\\
\geqslant\frac{|a_j|}{2}|m|^{x_j}|n|^{y_j}.
\end{multline*}

It remains to ensure \eqref{bound} on the domains $w_{j,j+1}$. The arguments will be similar.

\begin{proposition}
Let $\partial_1^x\partial_2^y$ be a monomial of $R$ with $(x,y)$ not belonging
to the segment $Z_jZ_{j+1}$ \textup{(}$(x,y)$ may be a node in the core of $\mathcal{Z}$
or may belong to some link of this core\textup{)}, and let $\lambda >0$.
If $M$ is sufficiently large, for the $\varkappa_j$ fixed above we have
\begin{equation}\label{wBound}
|m|^{x_{j} + k\frac{x_{j+1} - x_j}{\varkappa_j}}|n|^{y_{j} +
k\frac{y_{j+1} - y_j}{\varkappa_j}} \geqslant \lambda|m|^x|n|^y \quad \hbox{for}\quad (m,n) \in w_{j,j+1}.
\end{equation}
\end{proposition}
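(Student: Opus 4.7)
The strategy parallels the preceding proposition (for $\Omega_j$), but now $w_{j,j+1}$ is precisely the region where the two monomials attached to $Z_j$ and $Z_{j+1}$ have comparable magnitude; on this set the remaining monomials on the segment $Z_jZ_{j+1}$ are automatically of the same size, while the monomials off the segment are strictly smaller. The first step is to encode the defining condition of $w_{j,j+1}$ as a power-curve asymptotic: it reads $|m|^{x_j-x_{j+1}}\asymp|n|^{y_{j+1}-y_j}$, and setting $\sigma_j=(x_j-x_{j+1})/(y_{j+1}-y_j)>0$ this becomes $|n|\asymp|m|^{\sigma_j}$, with implicit constants depending only on $\gamma_{j,j+1}$. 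Any fixed power $|m|^a|n|^b$ is therefore comparable to $|m|^{a+\sigma_j b}$ on $w_{j,j+1}$.

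The second step is purely arithmetic. The admissible line $\Lambda_j$ has equation $u+\sigma_j v=x_j+\sigma_j y_j$ (a one-line verification using $Z_j$ and $Z_{j+1}$), so admissibility amounts to $u+\sigma_j v\leqslant x_j+\sigma_j y_j$ for every lattice point $(u,v)$ carrying a monomial of $R$, with equality precisely on the segment $Z_jZ_{j+1}$. The intermediate point on the left of \eqref{wBound} lies on this segment and therefore contributes $|m|^{x_j+\sigma_j y_j}$, while the right-hand side contributes $\lambda|m|^{x+\sigma_j y}$. The ratio is $\lambda^{-1}|m|^\delta$, where $\delta=x_j+\sigma_j y_j-x-\sigma_j y$. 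Granted that $\delta>0$, the relation $|n|\asymp|m|^{\sigma_j}$ forces $|m|\to\infty$ on $w_{j,j+1}\setminus[-M,M]^2$ as $M\to\infty$, and choosing $M$ large enough makes $|m|^\delta$ beat $\lambda$ times the finitely many multiplicative constants, yielding \eqref{wBound}.

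The main obstacle is to establish $\delta>0$ strictly in every relevant case, i.e.\ that every point $(x,y)$ carrying a monomial of $R$ but not belonging to the segment $Z_jZ_{j+1}$ lies strictly below $\Lambda_j$. This is exactly where the strict concavity of $\mathcal{Z}$ enters. I would distinguish three cases: (a) other nodes $Z_i$, $i\neq j,j+1$, which lie strictly below $\Lambda_j$ because otherwise two links of $\mathcal{Z}$ would share a slope; (b) intermediate lattice points on other links of the core, which are convex combinations of nodes strictly below $\Lambda_j$ and hence strictly below themselves; (c) points on the auxiliary segments $Z_0Z_1$ or $Z_KZ_{K+1}$, where one coordinate vanishes and the strict inequality is immediate from $\sigma_j>0$. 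One should also observe that no monomial of $R$ can lie on the extension of $\Lambda_j$ beyond the segment, since concavity forces such an extension to leave the closed region bounded by $\mathcal{Z}$. Once $\delta>0$ is secured uniformly over the finite list of off-segment monomials of $R$, the estimate \eqref{wBound} follows by the computation of the second paragraph.
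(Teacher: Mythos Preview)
Your argument is correct and follows the same route as the paper's: both reduce the inequality to the fact that $(x,y)$ lies \emph{strictly} below the line $\Lambda_j$, so that the exponent $\delta=x_j+\sigma_j y_j-(x+\sigma_j y)$ is strictly positive, and then use that $|m|\to\infty$ on $w_{j,j+1}$ as $M\to\infty$ (with $\gamma_{j,j+1}$ already fixed). The paper's proof is terse---it simply says ``proved much as the preceding one'' and notes that the coefficient of $\log|m|$ in \eqref{nonnegative} is now strictly positive---whereas you spell out the concavity argument for $\delta>0$ in more detail; your case (c) on auxiliary segments is unnecessary since all monomials of $R$ lie on the core, but this does no harm.
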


Note that, on $w_{j,j+1}$, all algebraic monomials corresponding to points on the line $Z_jZ_{j+1}$
are comparable.

The proposition is proved much as the preceding one. The only difference is that
the coefficient of $\log |m|$ in \eqref{nonnegative} is strictly positive, because
this time the point $(x,y)$ lies strictly below the line $Z_jZ_{j+1}$.
So, we need not impose additional assumptions on $\gamma_{j, j+1}$, it suffices to merely
take $|m|$ sufficiently large
(this is important, because the constant $\gamma_{j, j+1}$ has already been fixed).

Now, we proceed to the verification of inequality \eqref{bound} on $w_{j,j+1}$.
First, the monomial $|m|^{x_i}|n|^{y_i}$ under study can be replaced by
any monomial of the form
$|m|^{x_{j} + k\frac{x_{j+1} - x_j}{\varkappa_j}}|n|^{y_{j} + k\frac{y_{j+1} - y_j}{\varkappa_j}}$
by the last proposition and the above observation that such monomials are comparable.
Second, we have the estimate

\begin{multline*}
\big|a_j (2\pi i m)^{x_j}(2\pi i n)^{y_j} + a_{j+1}(2\pi i m)^{x_{j+1}}(2\pi i n)^{y_{j+1}} +\\
\sum\limits_{k = 1}^{\varkappa_j - 1} b_{jk}(2\pi i m)^{x_{j} + k\frac{x_{j+1}
- x_j}{\varkappa_j}}(2\pi i n)^{y_{j} + k\frac{y_{j+1} - y_j}{\varkappa_j}}\big|=\\
\left|(2\pi i m)^{x_{j}}(2\pi i n)^{y_{j}}\prod\limits_{\theta=0}^{\varkappa_j+1}(1 +
\xi_{\theta}(2\pi i m)^{\frac{-x_{j+1} + x_{j}}{\varkappa_{j}}}(2\pi i n)^{\frac{y_{j+1}
- y_{j}}{\varkappa_{j}}})\right| \geqslant \\
C\left|\prod\limits_{{\theta}=0}^{\varkappa_j+1}
\Im(i^{\frac{y_{j+1} - y_{j} -x_{j+1} +
x_{j}}{\varkappa_{j}}}\xi_{\theta})\right| |m|^{x_j}|n|^{y_j}.
\end{multline*} 
The first identity is merely a factorization of the polynomial, and the constant in the subsequent inequality
is nonzero by the ellipticity condition (the numbers
$i^{\frac{y_{\theta+1} - y_{\theta} -x_{\theta+1} + x_{\theta}}{\varkappa_{\theta}}}\xi_{\theta}$ are not pure real)
and the fact that the quantity
$|m|^{\frac{-x_{j+1} + x_j}{\varkappa_j}}|n|^{\frac{y_{j+1} - y_j}{\varkappa_j}}$
is bounded away from zero on $w_{j,j+1}$. It follows that for the $\Lambda_j$-senior
part $S_j$ of $R$ (recall that $\Lambda_j$ includes the link $Z_jZ_{j+1}$) and for every $\lambda$
we can choose $M$ so large that, on $w_{j,j+1}$, we shall have
\begin{equation*}
|P_R(m,n)| \geqslant \frac{1}{2}\Big|S_j (m,n)\Big| \geqslant C|m|^{x_j}|n|^{y_j}.
\end{equation*}
This completes the proof.
\subsection{Other isomorphism cases} The idea of domination allows us to establish isomorphism
even in some cases when the ellipticity condition is violated. The simplest one is the case
mentioned in Remark \ref{suppl1}.
\begin{proposition}
If there are no admissible lines, the space $C^T(\mathbb{T}^2)$ is complemented in a $C(K)$-space.
\end{proposition}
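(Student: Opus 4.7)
The plan is to imitate the proof of Proposition \ref{MainT} step by step, taking advantage of the fact that in the absence of admissible lines everything simplifies. The broken line $\mathcal{Z}$ now has core reduced to the single node $M_1 = (x_1, y_1)$, and the subordinate monomials are exactly those $\partial_1^\alpha\partial_2^\beta$ with $(\alpha,\beta)\in [0,x_1]\times[0,y_1]\setminus\{(x_1,y_1)\}$. By Lemma \ref{pp}, after replacing the collection by suitable linear combinations, I may assume $T_1 = a\partial_1^{x_1}\partial_2^{y_1} + r_0$ and $T_j = r_j$ for $j\geqslant 2$, where each $r_j$ is subordinate to $R := a\partial_1^{x_1}\partial_2^{y_1}$. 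Since there are no admissible lines at all, the ellipticity condition of Definition \ref{ellipticity} is vacuously satisfied.

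The main step is to establish the analog of Lemma \ref{L1}. For every subordinate monomial $\partial_1^\alpha\partial_2^\beta$, off the coordinate axes the symbol in question becomes
\[
\mu(m,n) = a^{-1}(2\pi im)^{\alpha-x_1}(2\pi in)^{\beta-y_1},
\]
with $\alpha-x_1\leqslant 0$, $\beta-y_1\leqslant 0$, and not both zero. Lemma \ref{BoundL} is here trivial, as $|P_R(m,n)|$ is proportional to $|m|^{x_1}|n|^{y_1}$. When both exponents are strictly negative, the summation-by-parts scheme used in the proof of Lemma \ref{L1} applies verbatim: the mixed finite difference satisfies $|\delta_m\delta_n\mu(m,n)|\leqslant C|m|^{\alpha-x_1-1}|n|^{\beta-y_1-1}$, so $\sum_{m,n}\log(m+1)\log(n+1)|\delta_m\delta_n\mu(m,n)|$ converges and its tail outside $[-M,M]^2$ vanishes as $M\to\infty$. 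In the degenerate case when exactly one of the two exponents equals zero, $\mu$ depends on a single variable; the propriety condition excludes the row or column where it is undefined, and the one-variable Abel summation (a simpler version of the argument used for Lemma \ref{L1}) yields norm bounded by $O(M^{\alpha-x_1})$ or $O(M^{\beta-y_1})$, which tends to $0$.

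Granted the multiplier lemma, the deduction proceeds exactly as in the paragraph following Proposition \ref{SecondT}. For $M$ large, the norms of $C^{\{T_1,\dots,T_l\}}_{M,0}(\mathbb{T}^2)$ and of $C^{\{R\}}_{M,0}(\mathbb{T}^2)$ are equivalent on the dense set of proper trigonometric polynomials. The map $f\mapsto Rf$ is an isometry from $C^{\{R\}}_{M,0}(\mathbb{T}^2)$ into $C_{M,0}$; since $P_R$ vanishes only on the coordinate axes, the image contains every proper trigonometric polynomial with spectrum in $\mathbb{Z}^2\setminus[-M,M]^2$, and is therefore dense. Hence $C^{\{T_1,\dots,T_l\}}_{M,0}(\mathbb{T}^2)\cong C_{M,0}$, which is complemented in $C(\mathbb{T}^2)$. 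Combined with the finite-codimensional splitting used before Proposition \ref{SecondT}, which expresses $C^T(\mathbb{T}^2)$ as the sum of $C^T_{M,0}(\mathbb{T}^2)$, a finite-dimensional space, and a copy of $C(\mathbb{T})$, this yields the required complemented embedding into a $C(K)$-space. I expect the only point needing care to be the one-variable degenerate case of the multiplier, which is however routine.
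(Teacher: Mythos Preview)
Your proof is correct and follows essentially the same route as the paper. The paper's own argument is terser: it reduces to $C^T_0(\mathbb{T}^2)$, writes $T_1=R+r$ with $R$ the single monomial $\partial_1^{x_1}\partial_2^{y_1}$, and then simply asserts that for any subordinate monomial $\rho$ one has $\|\rho f\|_{C}\leqslant\varepsilon\|Rf\|_{C}$ on proper functions with spectrum outside $[-M,M]^2$ for $M$ large, without spelling out the multiplier estimate; your version supplies that detail (including the one-variable degenerate case) and then concludes in the same way.
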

In fact, the space in question is isomorphic to $C(\mathbb{T})$, but we refrain from discussing this.
\begin{proof} As previously, it suffices to prove that the subspace $C^T_0(\mathbb{T}^2)$ of
admissible functions is complemented in a $C(K)$-space. We may and do assume that $T_1=R+r$
where $R$ is a differential monomial and both $r$ and all operators $T_2,\dots,T_j$ are
subordinate to $R$. It suffices to show that, for some $M$, the norm of $C^T_0(\mathbb{T}^2)$
is equivalent to that of $C^{\{R\}}_0(\mathbb{T}^2)$ on the set of proper functions that have no
spectrum in the square $[-M,M]\times [-M,M]$. But it is easy to see that if a differential
monomial $\rho$ is subordinate to $R$, then $\|\rho f\|_{C(K)}\leqslant\varepsilon
\|Rf\|_{C(K)}$ for every such $f$ if $M$ is sufficiently large.
\end{proof}

We present yet another related example of isomorphism.
Let $e_1, e_2, \ldots, e_n$ be pairwise nonproportional rational vectors on the plane.
In what follows, we need a different notion of a proper function.
Specifically, a function is said to be \textit{quite proper} if its
Fourier coefficients vanish at all points of $\mathbb{Z}^2$ that lie
on the straight lines generated by the above vectors.
In \cite{KM1} it was explained that passage to quite proper functions
does not change the isomorphic type of the spaces we treat here. 
For any quite proper function $f \in C^T(\mathbb{T}^2)$, we can find a function
$g \in C^T(\mathbb{T}^2)$ such that $\partial_{e_k}g=f$ ($\partial_{e_k}$ denotes
a directional derivative) and $\|g\|_{\infty}\leqslant C\|f\|_{\infty}$. (By
$\|\cdot\|_{\infty}$ we mean the supremum norm.)
This operation will be called formal integration in the direction
$e_k$, but in fact it is done on the level of Fourier coefficients.
Staying within quite proper functions, we avoid division of nonzero Fourier
coefficients by zero under this operation.

Now, suppose that the family $T$ contains the operator
$T_1=\partial_{e_1} \partial_{e_2} \ldots \partial_{e_n}$ and all other
operators in the family are of order strictly smaller than $n$.
\textit{Then} $C^{(T)}(\mathbb{T}^2)$ \textit{is isomorphic to}
$C(\mathbb{T}^2)$. \textit{On the quite proper functions, an isomorphism
is given by the mapping} $f \mapsto T_1f$.

For the proof, it suffices to show that $\|T_jf\|_{\infty}\leqslant C\|T_1f\|_{\infty}$
for all $T_j\in T$ if $f$ is quite proper, with $C$ independent of $f$.
To do this, we need a simple algebraic lemma.

\begin{lemma}Let $P(x)=(x-x_1)(x-x_2) \ldots (x-x_n)$ be a polynomial of degree
$n$ that has $n$ pairwise different roots. Then its monic divisors of degree
$n-1$ form a basis in the linear space of polynomials of degree at most $n-1$.
\end{lemma}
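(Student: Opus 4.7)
The plan is to identify the monic divisors of degree $n-1$ explicitly, verify that there are exactly $n$ of them, and then show linear independence by evaluating at the roots of $P$.

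First, I would observe that since $P(x)=(x-x_1)\cdots(x-x_n)$ factors into distinct linear factors, every monic divisor of degree $n-1$ is obtained by deleting exactly one factor. Thus the monic divisors of degree $n-1$ are precisely the $n$ polynomials
\[
Q_k(x)=\prod_{j\ne k}(x-x_j),\qquad k=1,\dots,n.
\]
Since the space of polynomials of degree at most $n-1$ has dimension $n$, it suffices to prove that $Q_1,\dots,Q_n$ are linearly independent.

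Second, I would exploit the key evaluation property: $Q_k(x_j)=0$ whenever $j\ne k$, while
\[
Q_k(x_k)=\prod_{j\ne k}(x_k-x_j)\ne 0,
\]
the nonvanishing following from the assumption that the roots $x_1,\dots,x_n$ are pairwise distinct. Suppose $\sum_{k=1}^n c_k Q_k(x)=0$. Substituting $x=x_j$ kills every term with $k\ne j$, leaving $c_j Q_j(x_j)=0$, whence $c_j=0$ for each $j$. This gives linear independence, and hence $\{Q_1,\dots,Q_n\}$ is a basis.

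There is no real obstacle here; the statement is essentially a restatement of the fact that the Lagrange interpolation nodes $x_1,\dots,x_n$ give rise to a basis of the space of polynomials of degree at most $n-1$, namely the (unnormalized) Lagrange basis polynomials $Q_k$.
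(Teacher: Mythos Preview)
Your proof is correct and follows essentially the same approach as the paper: identify the monic degree $n-1$ divisors as $P_k(x)=P(x)/(x-x_k)$, then prove linear independence by evaluating a putative linear relation at each root $x_k$, which isolates the $k$th coefficient. Your additional remark connecting this to the Lagrange interpolation basis is apt but not in the paper.
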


\begin{proof}Let $P_k(x)$ denote the polynomial $\frac{P(x)}{(x-x_k)}$. When $k$ runs
from $1$ to $n$, we obtain all divisors of $P(x)$ of degree $n-1$.
It suffices to prove that they are linearly independent.
Suppose the contrary, let $\lambda_1 P_1(x) + \lambda_2 P_2(x) +\ldots +\lambda_n P_n(x)=0$
for some coefficients $\lambda_1,\lambda_2,\ldots,\lambda_n$.
Putting $x=x_k$, we obtain $\lambda_k=0$ because all roots are different.
\end{proof}

Now, we prove the above claim. We denote by $T_j^{(n-1)}$ the sum of all terms
of order $n-1$ that occur in $T_j \in T$  (some of these ``senior parts" of the $T_j$ may be
equal to zero). Next, denote by $T_{1k}$ the operator
$\frac{T_1}{\partial_{e_k}}$ (i.e., the product of all directional derivations
$\partial_{e_i}$ for $i \ne k$). We observe that, on the quite proper functions
$f$, we have $\|T_{1k}f\|_{\infty} \leqslant C\|T_1f\|_{\infty}$ by directional formal integration.
But by the lemma, each operator $T^{(n-1)}_{j}$ is a linear combination of the $T_{1k}$
whence we obtain a similar estimate for all $T^{(n-1)}_{j}$.

Now we extract the parts of order $n-2$ from all $T_j$, and estimate them in a similar
way by the same method. Then we do the same in the order $n-3$, etc.

\section{Nonisomorphism}\label{S2}

In this section, we deduce Theorem \ref{main} from Theorem \ref{main2}. This will be an
improvement of similar arguments in \cite{KM1} and \cite{KM2}. We begin with
a solvability condition for system (\ref{syst}). We observe that, since
all $\varphi_j$ in this system are assumed to be proper, all measures $\mu_j$
must also be proper.

\begin{lemma}\label{ann}
Suppose $\mu_0,\dots,\mu_N$ are proper distributions on the torus and
$k,l\in\mathbb{N}$. Then system \eqref{syst} admits a solution in proper
distributions $\varphi_0,\dots,\varphi_N$ if and only if
\begin{equation}\label{annu}
\sum_{j=0}^N\partial_1^{jk}\partial_2^{(N-j)l}\mu_j=0.
\end{equation}
Moreover, if \eqref{annu} is fulfilled, this solution is unique.
\end{lemma}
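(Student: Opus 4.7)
The plan is to pass to Fourier coefficients, where the lemma becomes essentially linear algebra. The crucial observation is that if $\varphi$ and $\mu$ are proper, then their Fourier transforms are supported on $\{(m,n)\in\mathbb Z^2:m\ne 0,\,n\ne 0\}$; on this set the symbols $(2\pi im)^k$ and $(2\pi in)^l$ of $\partial_1^k$ and $\partial_2^l$ are nonzero and hence freely invertible. This immediately removes the only potential obstacle, namely division by zero, and also shows that the torus variant is no harder than its continuous analog.

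For necessity, I would apply $\partial_1^{jk}\partial_2^{(N-j)l}$ to the $j$-th equation of \eqref{syst} and sum over $j=0,\dots,N$. For $j\ge 1$, the $\varphi_j$-contribution produced by the $j$-th equation is $\partial_1^{jk}\partial_2^{(N-j+1)l}\varphi_j$, while the $\varphi_j$-contribution produced by the $(j-1)$-st equation is its exact negative, so the $\varphi_j$ terms telescope pairwise. The two surviving boundary pieces, $-\partial_1^k\partial_2^{Nl}\varphi_1$ from the zeroth equation and $\partial_1^{Nk}\partial_2^l\varphi_N$ from the last, are themselves cancelled by the $j=1$ and $j=N-1$ contributions respectively. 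Only $\sum_{j=0}^N\partial_1^{jk}\partial_2^{(N-j)l}\mu_j$ remains, so \eqref{annu} must hold.

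For uniqueness and sufficiency, I would read the system itself as an explicit Fourier-side recursion: the first equation forces $\hat\varphi_1(m,n)=-\hat\mu_0(m,n)/(2\pi im)^k$ on $\{m,n\ne 0\}$, and then
\[
\hat\varphi_{j+1}(m,n)=\frac{(2\pi in)^l\hat\varphi_j(m,n)-\hat\mu_j(m,n)}{(2\pi im)^k}.
\]
Since $|m|\ge 1$ on the relevant set, division by $(2\pi im)^k$ preserves polynomial growth, so this determines $\varphi_1,\dots,\varphi_N$ uniquely as proper distributions. By construction the first $N$ equations are satisfied. Substituting the resulting explicit formula for $\hat\varphi_N$ into the last required identity $(2\pi in)^l\hat\varphi_N=\hat\mu_N$ and clearing denominators by $(2\pi im)^{Nk}$ reproduces precisely the Fourier-side form of \eqref{annu}, which is the standing hypothesis. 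No genuine difficulty arises; the argument is a routine bookkeeping of a telescoping sum, handled uniformly by the Fourier picture.
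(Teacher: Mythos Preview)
Your proof is correct and follows essentially the same idea as the paper: both reduce to Fourier coefficients, exploiting that $(2\pi im)^k$ and $(2\pi in)^l$ are invertible on the support of proper distributions. The only cosmetic difference is that the paper organizes the sufficiency argument as an induction on $N$ (with base case the statement that $\partial_2^l u+\partial_1^k v=0$ implies $u=-\partial_1^k w$, $v=\partial_2^l w$ for some proper $w$), whereas you unwind the recursion for $\hat\varphi_1,\dots,\hat\varphi_N$ directly and then verify the last equation; these are two packagings of the same telescoping computation.
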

\begin{proof}The implication (\ref{syst})$\Rightarrow$(\ref{annu}) is easy.
The converse implication is proved by induction on $N$. The base ($N=1$) looks like
this: if two proper distributions $u$ and $v$ on $\mathbb{T}^2$
satisfy
\begin{equation}\label{baza}
\partial_2^l u+\partial_1^k v=0,
\end{equation}
then there is a proper distribution $w$
with $u=-\partial_1^k w$ and $v=\partial_2^l w$. This can be verified, e.g., by
considering Fourier coefficients. To pass from $N-1$ to $N$, we rewrite
(\ref{annu}) in the form (\ref{baza}) with $u=\partial_2^{(N-1)l}\mu_0$ (then it
is clear what $v$ is), find $w$ as above, and invoke the inductive
hypothesis.

Uniqueness is transparent from the above arguments. Alternatively (which
is in fact the same), it is easy to see directly from system \eqref{syst}
that the Fourier coefficients of the $\varphi_j$ corresponding to all couples
of nonzero integers are determined by those of the $\mu_j$.
\end{proof}
\subsection{Several reductions}Now, suppose we are under the assumptions
of Theorem~\ref{main}. The space $C_0^T(\mathbb{T}^2)$ is complemented in
$C^T(\mathbb{T}^2)$, so it suffices to prove that the bidual of
$C_0^T(\mathbb{T}^2)$ does not embed complementedly in a $C(K)$-space. Next,
let the admissible line $\Lambda$ mentioned in
Theorem \ref{main} be given by the equation $\frac xa +\frac yb =1$.
We claim that there is no loss of generality in assuming that $a$ and $b$
are natural numbers. Indeed, $\Lambda$ must contain two points $(r_1,r_2)$
and $(\rho_1,\rho_2)$ with nonnegative integral coordinates, and we may assume
that $r_1>\rho_1$ and $r_2<\rho_2$. Then the equation of $\Lambda$ can also be
written in the form $\frac{x-\rho_1}{r_1-\rho_1}=\frac{y-\rho_2}{r_2-\rho_2}$, or
\[
\frac{x}{r_1-\rho_1}+\frac{y}{\rho_2-r_2}=\frac{\rho_1}{r_1-\rho_1}+\frac{\rho_2}{\rho_2-r_2}.
\]
Next, if $u,v\in\mathbb{N}$, then the spaces
\[
C_0^{\{T_1,\dots,T_l\}}(\mathbb{T}^2)\quad\mbox{and}\quad
C_0^{\{T_1\partial_1^u\partial_2^v,\dots,T_l\partial_1^u\partial_2^v\}}(\mathbb{T}^2)
\]
are isomorphic. Specifically, the operator $f\mapsto\partial_1^u\partial_2^v f$
is an isomorphism from the second space onto the first. Passage to the new space
leads to shifting the line $\Lambda$ by the vector $(u,v)$. The equation of the shifted
line is 
\[
\frac{x}{r_1-\rho_1}+\frac{y}{\rho_2-r_2}=\frac{\rho_1+u}{r_1-\rho_1}+\frac{\rho_2+v}{\rho_2-r_2}.
\]
The lengths of the segments cut by the new line from the $x$- and $y$-axes are equal to
\[
\rho_1+u+(\rho_2+v)\frac{r_1-\rho_1}{\rho_2-r_2}\,\,\,\mbox{ and }\,\,\,
(\rho_1+u)\frac{\rho_2-r_2}{r_1-\rho_1}+\rho_2+v.
\]
Clearly, these numbers are integers for some choice of $u$ and $v$.

So, we assume that $a,b\in\mathbb{N}$. Let $N$ be the greatest common divisor of
$a$ and $b$. We put $m=a/N$, $n=b/N$ (thus, $m$ and $n$ are coprime). All points
on $\Lambda$ whose coordinates are nonnegative integers are of the form
$(jm,(N-j)n),\,\,\, j=0,\dots, N$.

\subsection{Annihilator and the Grothendieck theorem}Consider the natural
embedding $f\mapsto\{T_1f,\dots,T_lf\}$ of the space
$C_0^{\{T_1,\dots,T_l\}}(\mathbb{T}^2)$ into the direct sum of $l$ copies
of the space $C(\mathbb{T}^2)$, and let $\mathcal{X}$ be the image of
$C_0^{\{T_1,\dots,T_l\}}(\mathbb{T}^2)$. By a Banach space theory commonplace,
if the bidual of $C_0^{\{T_1,\dots,T_l\}}(\mathbb{T}^2)$ is isomorphic to
a complemented subspace of a $C(K)$-space, then the annihilator of
$\mathcal{X}$ in $C(\mathbb{T}^2)\oplus\dots\oplus C(\mathbb{T}^2)$ is
complemented in the dual space, which consists of $l$-tuples
of measures on $\mathbb{T}^2$\footnote{We recall the spell proving this. Let $E$ be a Banach space and $F$ its closed subspace. If $F^{\ast\ast}$ embeds complementedly in a $C(K)$-space, then $F^{\ast}$ is an $\mathcal{L}^1$-space. Trivially, the canonical surjection $\pi$ from $E^{\ast}\to F^{\ast}$ admits a section on each $l^1_n$-subspace of $F^{\ast}$. Since $\pi$ is $w^{\ast}$-continuous, taking a limit point we obtain a global section for $\pi$. So, the kernel of $\pi$ (equal to the annihilator of $F$) is complemented in $E^{\ast}$. Consult \cite{LR} for the omitted definitions and details.}. By the Grothendieck theorem, then an arbitrary
bounded linear operator from $\mathcal{X}^{\bot}$ to a Hilbert space
would have been $1$-absolutely summing, and, \textit{a fortiori},
$2$-absolutely summing. We shall show this is not the case. (The definition of a
$2$-absolutely summing operator will be given later, when it is really used.)

In order to construct an operator with the required property, we need to understand
the structure of the annihilator $\mathcal{X}^{\bot}$. First, we observe that we can
do all constructions within proper measures. Indeed, to any measure $\rho$
on $\mathbb{T}^2$, we can assign its \textit{proper part} whose Fourier
coefficients are equal to $\hat{\rho}(s,t)$ if $s\ne 0$ and $t\ne 0$, and are zero
otherwise. It is easy to realize that this operation is a bounded projection
on the space of measures. Moreover, \textit{if a collection of measures
annihilates $\mathcal{X}$, then so does the collection of their proper parts}.
In what follows, we shall work only with such collections of proper parts.
The operations described below will not produce ``improper" objects from
such collections. 

Next, let $\tau_j$ be the $\Lambda$-senior
part of the operator $T_j$, and let $\sigma_j=T_j-\tau_j$. A collection
$(\nu_1,\dots,\nu_l)$ of (proper) measures on $\mathbb{T}^2$ belongs to $\mathcal{X}^{\bot}$ if
\begin{equation}\label{annul}
0=T_1^{\ast}\nu_1+\dots+T_l^{\ast}\nu_l=\tau_1^{\ast}\nu_1+\dots+\tau_l^{\ast}\nu_l
+\sigma_1^{\ast}\nu_1+\dots+\sigma_l^{\ast}\nu_l.
\end{equation}
We want to modify the collection $T$ of differential operators without changing
the space, by using the fact that the space depends only on the linear span of the $T_j$. By
assumption, there are at least two linearly independent operators among
$\tau_1^{\ast},\dots,\tau_l^{\ast}$. We write
\begin{equation}\label{matrix}
\tau_s^{\ast}=\sum_{j=0}^N a_{sj}\partial_1^{jm}\partial_2^{(N-j)n}
\end{equation}
and make two steps in reshaping the matrix $\{a_{sj}\}$ to a diagonal form.
Let $j_0$ be the smallest index for which $a_{sj_0}\ne 0$ for at least one $s$.
By reindexing, we may assume that $s=1$; replacing $T_1$ by its multiple, we may
assume that $a_{1j_0}=1$. Next, subtracting a multiple of $T_1^{\ast}$ from each
$T_s^{\ast}$ with $s>1$, we can ensure that $s_{sj_0}=0$ for $s>1$. By assumption,
after that an index $j_1$ with $j_1>j_0$ must exist such that $a_{sj_1}\ne 0$ for
some $s>1$. There is no loss of generality in assuming that $j_1$ is the smallest
index with this property, $s=2$, and $a_{2j_1}=1$.

Returning to formula (\ref{annul}), we assume that all these modifications have
already been done. Now, we want to eliminate the junior terms $\sigma_s^{\ast}\nu_s$
in this formula. Let $\alpha,\beta$ be two nonnegative integers satisfying
$\frac{\alpha}{a}+\frac{\beta}{b}<1$. There is a standard way to express the
restriction of the differential monomial $\partial_1^{\alpha}\partial_2^{\beta}$
to the space of proper functions in terms of $\partial_1^a$ and $\partial_2^b$.
Specifically, for $u\ne 0$ and $v\ne 0$ we have
\begin{equation}\label{multip}
\widehat{\partial_1^{\alpha}\partial_2^{\beta}f}(u,v)=
\frac{(iu)^{\alpha +a}(iv)^{\beta}}{(iu)^{2a}\pm (iv)^{2b}}\widehat{\partial_1^a f}(u,v)\pm
\frac{(iu)^{\alpha}(iv)^{\beta +b}}{(iu)^{2a}\pm (iv)^{2b}}\widehat{\partial_2^b f}(u,v).
\end{equation}
The sign $+$ or $-$ must be one and the same at all occasions; it is determined by the
condition $(-1)^a=\pm (-1)^b$, then the denominators do not vanish anywhere except
the point $(0,0)$.
\begin{lemma}\label{L2}
The Fourier multiplies $I_{\alpha\beta}$ and $J_{\alpha\beta}$ with symbols
\begin{equation}\label{multip1}
\frac{(iu)^{\alpha +a}(iv)^{\beta}}{(iu)^{2a}\pm (iv)^{2b}}
\mbox{ and }\frac{(iu)^{\alpha}(iv)^{\beta +b}}{(iu)^{2a}\pm (iv)^{2b}}
\end{equation}
are bounded on $L^1_0 (\mathbb{T}^2)$, consequently, they take proper measures
to \textup{(}proper\textup{)} measures.
\end{lemma}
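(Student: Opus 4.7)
The plan is to mimic the summation-by-parts scheme from the proof of Lemma \ref{L1}, adapted to the symbols \eqref{multip1}. I would first record the essential scaling feature: under the mixed dilation $(u,v)\mapsto(\lambda^bu,\lambda^av)$, both symbols are anisotropically homogeneous of degree $b\alpha+a\beta-ab=-\delta$, where $\delta>0$ by the hypothesis $\alpha/a+\beta/b<1$. The sign convention $(-1)^a=\pm(-1)^b$ guarantees that $g(u,v):=(iu)^{2a}\pm(iv)^{2b}$ vanishes only at the origin, so $|g(u,v)|\asymp\rho(u,v)^{2ab}$ where $\rho(u,v):=(u^{2a}+v^{2b})^{1/(2ab)}$ is the associated mixed-homogeneous norm; in particular, each symbol is smooth on $\mathbb{R}^2\setminus\{0\}$.

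Following the blueprint of Lemma \ref{L1}, the next step is to split the multiplier into its four quadrant pieces and within each perform double Abel summation (the boundary contributions vanish because the symbol is zero whenever $u$ or $v$ is zero, which is exactly the proper restriction). Using the well-known $O(\log(m+1)\log(n+1))$ bound for the partial-sum operators on both $C(\mathbb{T}^2)$ and $L^1(\mathbb{T}^2)$, boundedness reduces to the convergence of
\[
\sum_{u,v\geqslant 1}\log(u+1)\log(v+1)\,|\delta_x\delta_y m(u,v)|
\]
together with the three symmetric sums, and the finite difference is controlled by $\max|\partial_s\partial_t m|$ over $[u,u+1]\times[v,v+1]$. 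A direct quotient-rule computation gives $\partial_s\partial_t m = Q(s,t)/g(s,t)^3$ for a polynomial $Q$, each of whose summands is a product of derivatives of $f(s,t):=(is)^{\alpha+a}(it)^\beta$ and of $g$; a summand-by-summand scaling check shows that $Q$ is mixed-homogeneous of degree $6ab-\delta-a-b$. Combined with $|g|\asymp\rho^{2ab}$, this yields $|\partial_s\partial_t m(s,t)|\lesssim\rho(s,t)^{-\delta-a-b}$ for $\rho\geqslant 1$.

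Decomposing the admissible part of the lattice into mixed-dyadic annuli $\{R\leqslant\rho<2R\}$ with $R=2^k$, each containing $O(R^{a+b})$ lattice points (by the change of variables to mixed-polar coordinates), the series is dominated by $\sum_k 2^{-k\delta}k^2$, which converges because $\delta>0$. This proves boundedness on $L^1_0(\mathbb{T}^2)$; the corresponding statement for proper measures follows either by duality (the same argument yields boundedness on $C_0(\mathbb{T}^2)$) or by observing that the convolution kernel is then a finite measure on the torus. The main technical obstacle is the careful bookkeeping of the polynomial $Q$ and the scaling identity for its monomials; the crucial input is the strict positivity of $\delta$, which allows the $\log^2$ factor to be absorbed by the excess in the decay rate.
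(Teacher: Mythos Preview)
Your argument is correct and follows the same Abel-summation scheme that underlies Lemma~\ref{L1}: quadrant splitting, double summation by parts (with vanishing boundary terms forced by the ``proper'' restriction), the $\log(m+1)\log(n+1)$ bound for the partial-sum operators, and a pointwise decay estimate for the mixed second difference of the symbol. The paper's own proof is a single sentence: Lemma~\ref{L2} is a special case of Lemma~\ref{L1} with $P_R(u,v)=(iu)^{2a}\pm(iv)^{2b}$ (the broken line $\mathcal{Z}$ degenerates to the single segment from $(2a,0)$ to $(0,2b)$, and the sign convention gives ellipticity; the exponents $(\alpha+a,\beta)$ and $(\alpha,\beta+b)$ are subordinate because $\frac{\alpha}{a}+\frac{\beta}{b}<1$).

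The difference is purely one of packaging. You redo the estimate directly, and in this particular case your route is actually tidier than the general machinery of Lemma~\ref{L1}: because the symbol is \emph{exactly} mixed-homogeneous (indeed $|g(u,v)|=u^{2a}+v^{2b}=\rho(u,v)^{2ab}$ on the nose with the stated sign choice), the decay $|\partial_s\partial_t m|\lesssim\rho^{-\delta-a-b}$ follows immediately from homogeneity plus boundedness on the unit ``sphere'', bypassing the monomial-by-monomial bookkeeping and the domains $\Omega_j$, $w_{j,j+1}$ that the general lemma needs. The dyadic-annulus summation is then cleaner than the paper's explicit double-series estimate. Conversely, the paper's approach buys you the result for free once Lemma~\ref{L1} is in hand, with no new computation. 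One cosmetic point: the paper writes the second derivative as $Q/P_R^4$ rather than your $Q/g^3$; both are valid (yours just cancels one factor of $g$), and the scaling count is unaffected.
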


This lemma is a partial case of Lemma \ref{L1}. It suffices to take $(iu)^{2a}\pm (iv)^{2b}$ for the role of $P_R$.

It should be noted that, in fact, the multipliers in question take also $L^1 (\mathbb{T}^2)$
to itself (consequently, send measures to measures). This is also easy.

It is convenient to write formula (\ref{multip}) in the form
\begin{equation}\label{multip2}
\partial_1^{\alpha}\partial_2^{\beta}=\partial_1^aI_{\alpha\beta}
+\partial_2^bJ_{\alpha\beta}.
\end{equation}

We apply (\ref{multip2}) to all differential monomials occurring in some
$\sigma_j^{\ast}$ in (\ref{annul}). Then the sum $\sum_j\sigma_j^{\ast}\nu_j$ becomes
transformed to $\partial_1^a\lambda_1+\partial_2^b\lambda_2$, where $\lambda_1$
and $\lambda_2$ are measures obtained from $\nu_1,\dots,\nu_l$ by certain linear
operations. After that, we regroup the summands in (\ref{annul}) assembling together
everything that involves each particular differentiable monomial
$\partial_1^{jm}\partial_2^{(N-j)n}$ ($j=1,\dots,N$). So, in the resulting
expression this differential monomial will be applied to a certain measure $\mu_j$,
$j=0,\dots,N$, and the collection of these measures satisfies equation
(\ref{annu}). Clearly, the $\mu_j$ depend linearly on $\nu_1,\dots,\nu_l$ and
are all proper.

Now, Lemma \ref{ann} shows that  system (\ref{syst}) admits a unique solution
$\varphi_0,\dots,\varphi_N$ in proper distributions, in fact, these distributions
belong to the Sobolev space $W_2^{\frac{k-1}{2},\frac{l-1}{2}}(\mathbb{T}^2)$
(see (\ref{sobolev})) by Theorem \ref{main2}.

So, a bounded linear operator from $\mathcal{X}^{\bot}$ to the Hilbert space
$W_2^{\frac{k-1}{2},\frac{l-1}{2}}(\mathbb{T}^2)$ has arisen. We summarise how
it acts. Given a collection of measures $\mathcal{X}^{\bot}$, we assign to it the
collection $\{\nu_1,\dots,\nu_l\}\in\mathcal{X}^{\bot}$ of their proper parts; this
collection gives rise to measures $\{\mu_0,\dots,\mu_N\}$ satisfying \eqref{annu},
from which we obtain functions $\varphi_j$ belonging to the Sobolev space mentioned
above. As was mentioned at the beginning of this subsection, should the bidual
of $C_0^T(\mathbb{T}^2)$ be complemented in some $C(K)$-space, this operator would
have been $2$-absolutely summing.

\subsection{Contradiction}We explain how to show that the operator constructed above
is not $2$-absolutely summing. By definition, an operator $S\colon E\to G$ is
$2$-absolutely summing if it takes weakly $2$-summable sequences to $2$-summable
sequences. A sequence is said to be $2$-summable if the squares of the norms of
its elements form a convergent series. A sequence $\{x_j\}_{j\in E}$ is said to be
weakly $2$-summable if the series $\sum_j |F(x_j)|^2$ converges for an arbitrary
bounded linear functional $F$ on $E$. Clearly, a bounded orthogonal sequence in
a Hilbert space is weakly $2$-summable. This allows us to exhibit weakly $2$-summable
sequences in spaces of measures in the following way. Suppose measures $\sigma_k\in
M(K)$ are all absolutly continuous with respect to one measure $\sigma\in M(K)$ and
their densities form a bounded orthogonal system in $L^2(\sigma)$; then the $\sigma_k$
form a weakly $2$-summable sequence in $M(K)$. Indeed, the mapping $f\mapsto fd\sigma$
is continuous from $L^2(\sigma)$ to $M(K)$, and the property of being a weakly $2$-summable
sequence survives under the action of a bounded linear operator.

To arrive at the required contradiction, we shall construct a sequence of elements
of $\mathcal{X}^{\bot}$ enumerated by couples $(p,q)$ of natural numbers. It will be
of the form
\begin{equation}\label{contr}
\{\nu_1^{(p,q)},\dots,\nu_l^{(p,q)}\}=
\{z_1^pz_2^qd\lambda,c_{pq}z_1^pz_2^qd\lambda,0,\dots,0\}.
\end{equation}
Here $\lambda$ is the normalized Lebesgue measure on the two-dimensional torus.
The coefficients $c_{pq}$ will be uniformly bounded, so the above discussion shows
that \eqref{contr} is a weakly $2$-summable sequence in the space of $l$-tuples of
measures and, consequently, in its subspace $\mathcal{X}^{\bot}$. For technical
reasons (see below) the indices will be subject to the condition
\begin{equation}\label{uslovie}
\frac{\delta}{2}q^l\leqslant p^k\leqslant\delta q^l \mbox{ and } p\geqslant C,
\end{equation}
where $\delta$ is sufficiently small and $C$ is sufficiently large.

We shall not need know much about the images $\{\varphi^{(p,q)}_j\}_{j=0,\dots,N}$
of these $l$-tuples of measures
under the operator described above. Specifically, let $j_0$ be the index that
arose under modification of the collection $T$ (see formula \eqref{matrix} and the
explanations after it). It will be ensured that
\begin{equation}\label{snizu}
\varphi^{(p,q)}_{j_0}=(ip)^{-k}\gamma_{p,q}z_1^pz_2^q,\mbox{ where } \inf_{p,q}
|\gamma_{(p,q)}|>0.
\end{equation}
Then the series
\[
\sum_{p,q}\|\varphi_{j_0}^{p,q}\|^2_{W_2^{\frac{k-1}{2},\frac{l-1}{2}}(\mathbb{T}^2)}
\]
(summation is over $(p,q)$ satisfying \eqref{uslovie}) diverges. Indeed, clearly, by
\eqref{snizu}, the norms under
the summation sign dominate the quantities $p^{k-1}q^{l-1}p^{-2k}$. By condition
\eqref{uslovie}, for every fixed $p\geqslant C$ each admissible value of $q$
is roughly $p^{\frac kl}$, and the number of admissible $q$ is also
roughly $p^{\frac kl}$. Thus, summation of $q^{l-1}$ over admissible $q$ for $p$ fixed
yields roughly $p^{\frac kl}p^{(l-1)\frac kl}=p^k$, so that the sum of the above quantities
over $p,q$ satisfying \eqref{uslovie} dominates the sum $\sum_{p\geqslant C}p^{-1}=\infty.$

It remains to exhibit elements of the form \eqref{contr} in the annihilator that satisfy
the conditions listed above. For this, we must trace what the corresponding
measures $\mu_0,\dots,\mu_N$ (see \eqref{annu}) may look like. First, the measures
$\mu_0$ and $\mu_N$ may
involve summands that have arisen from the junior parts of the operators $T_j^{\ast}$ in
accordance with formula \eqref{multip2}. These summands are of the form
$\xi_{pq}z_1^pz_2^q+\eta_{pq}c_{pq}z_1^pz_2^q$ for $\mu_0$ and
$\rho_{pq}z_1^pz_2^q+\varkappa_{pq}c_{pq}z_1^pz_2^q$ (with other coefficients) for
$\mu_N$. The numbers $\xi_{pq}$, $\eta_{pq}$, $\rho_{pq}$, and $\varkappa_{pq}$ can be
expressed in terms of the quantities \eqref{multip1} with $u$ and $v$ replaced by
$p$ and $q$.
\begin{lemma}\label{decay}
For some $\varepsilon >0$, we have
\[
\xi_{pq},\, \eta_{pq},\, \rho_{pq},\, \varkappa_{pq}=O(p^{-\varepsilon}).
\]
\end{lemma}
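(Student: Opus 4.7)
The plan is to unwind the construction that produces $\xi_{pq}$, $\eta_{pq}$, $\rho_{pq}$, $\varkappa_{pq}$ and reduce the claim to pointwise estimates of the multiplier symbols \eqref{multip1} at $(u,v)=(p,q)$. Since the test measures $\nu_s^{(p,q)}$ in \eqref{contr} are pure exponentials $z_1^pz_2^q\,d\lambda$ (up to the scalar $c_{pq}$), applying a junior monomial $\partial_1^\alpha\partial_2^\beta$ to $\nu_s^{(p,q)}$ via \eqref{multip2} produces $\partial_1^a(I_{\alpha\beta}\nu_s^{(p,q)})+\partial_2^b(J_{\alpha\beta}\nu_s^{(p,q)})$, and on a single exponential $I_{\alpha\beta}$ and $J_{\alpha\beta}$ act simply as multiplication by the symbol values in \eqref{multip1} at $(p,q)$. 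After regrouping \eqref{annul} by differential monomials $\partial_1^{jm}\partial_2^{(N-j)n}$ (as in the paragraph above \eqref{multip}), the contribution $\partial_2^bJ_{\alpha\beta}\nu_s$ lands in $\mu_0$ (which carries $\partial_2^{Nn}=\partial_2^l$) and $\partial_1^aI_{\alpha\beta}\nu_s$ lands in $\mu_N$. Thus $\xi_{pq},\eta_{pq}$ are finite linear combinations of values of the $J_{\alpha\beta}$-symbols at $(p,q)$ taken over the junior multi-indices $(\alpha,\beta)$ of $\sigma_1$ and $\sigma_2$, and similarly for $\rho_{pq},\varkappa_{pq}$ with $I_{\alpha\beta}$. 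It therefore suffices to bound each such symbol value by $p^{-\varepsilon}$.

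Next I would estimate a single symbol. The $\pm$ sign in \eqref{multip1} was fixed precisely so that $(iu)^{2a}\pm(iv)^{2b}=\pm(u^{2a}+v^{2b})$, hence the denominator has magnitude $p^{2a}+q^{2b}$ at $(p,q)$. Via the identifications $m=k$, $n=l$, $a=Nm$, $b=Nn$ coming from the reduction in Section~2.1, condition \eqref{uslovie} is equivalent to $p^a\asymp q^b$, which forces the denominator to be $\asymp p^{2a}$ and makes the ratio $q\asymp p^{a/b}$ valid. The numerator magnitudes $p^{\alpha+a}q^\beta$ and $p^\alpha q^{\beta+b}$ both become $\asymp p^{\alpha+a+a\beta/b}$, so both symbols satisfy
\[
\left|\tfrac{(ip)^{\alpha+a}(iq)^\beta}{(ip)^{2a}\pm(iq)^{2b}}\right|+\left|\tfrac{(ip)^{\alpha}(iq)^{\beta+b}}{(ip)^{2a}\pm(iq)^{2b}}\right|\;\asymp\;p^{\alpha+a\beta/b-a}\;=\;p^{-a\left(1-\frac{\alpha}{a}-\frac{\beta}{b}\right)}.
\]

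To conclude, I would invoke the fact that $(\alpha,\beta)$ runs over the multi-indices of monomials in the junior parts $\sigma_1,\dots,\sigma_l$, so by definition of the $\Lambda$-senior/junior decomposition we have the strict inequality $\frac{\alpha}{a}+\frac{\beta}{b}<1$. As the collection of such multi-indices is finite, $\rho_0:=\min\bigl(1-\frac{\alpha}{a}-\frac{\beta}{b}\bigr)>0$, and the lemma holds with $\varepsilon=a\rho_0$. The one point that deserves care is precisely the sign choice in \eqref{multip1}, which is what rules out cancellation in the denominator at $(p,q)$; once that is granted, the estimate is a direct computation, and \eqref{uslovie} is tailored exactly so that $(p,q)$ sits in the regime $p^a\asymp q^b$ where the senior part of the polynomial $(iu)^{2a}\pm(iv)^{2b}$ is maximally large compared to the junior numerators. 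I do not foresee any serious obstacle beyond this bookkeeping.
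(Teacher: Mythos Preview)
Your argument is correct and follows essentially the same route as the paper's proof: reduce to bounding the symbols \eqref{multip1} at $(p,q)$, use $p^k\asymp q^l$ (equivalently $p^a\asymp q^b$) to see that the denominator is $\asymp p^{2a}$ and the numerators are $\asymp p^{\alpha+a+a\beta/b}$, and conclude from $\frac{\alpha}{a}+\frac{\beta}{b}<1$ and finiteness of the set of $(\alpha,\beta)$. The only slip is the notational typo $\partial_2^{Nn}=\partial_2^l$ (it should be $\partial_2^{Nn}=\partial_2^{b}$, since $l=n$ and $b=Nn$), which does not affect the argument.
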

We remind the reader that we have imposed the condition $p^k\asymp q^l$.
\begin{proof}
Consider, for example, the first of the quantities \eqref{multip1} with $p$
in place of $u$ and $q$ in place of $v$. Since $\frac kl =\frac ab$,
we see that, under our assumptions, this quantity has the same order of
magnitude as
\[
\frac{p^{\alpha+a}p^{\frac kl\beta}}{p^{2a}+p^{2b\frac kl}}=
\frac{p^{\alpha+a}p^{\frac ab\beta}}{2p^{2a}}=\frac 12 p^{(-a+\alpha+\frac ab \beta)}
=\frac 12 p^{-a(1-\frac{\alpha}{a}-\frac{\beta}{b})}.
\]
It remains to recall that $\frac{\alpha}{a}+\frac{\beta}{b}<1$ and that the set of the couples
$(\alpha,\beta)$ is finite.
\end{proof}

Now, we want to pay attention to the (proper) solution of system \eqref{syst} if the
collection $\{\mu_0,\dots,\mu_n\}$ has arisen from a collection like in \eqref{contr}.
The calculations depend on the specific position of the indices $j_0 <j_1$ (that arose when we
modified the operators $T_j$) in the
interval $[0,N]$. We present the details for two cases; in the other cases, combination of
the arguments below is required.

\textbf{Case 1:} $j_0=0, j_1=N$. Then \eqref{syst} acquires the form
\begin{gather}
-\partial_1^k\varphi_1=z_1^pz_2^q(1+\xi_{pq}+\eta_{pq}c_{pq});\label{81}\\
\partial_2^l\varphi_j-\partial_1^k\varphi_{j+1}=a_{1j}z_1^pz_2^q,\quad j=1,\dots,N-1;\label{82}\\
\partial_2^l\varphi_N=z_1^pz_2^q(a_{1N}+c_{pq}+\rho_{pq}+\varkappa_{pq}c_{pq}).\label{83}
\end{gather}
Resolving equation \eqref{81}, we find
\begin{equation}\label{9}
\varphi_1=-(ip)^{-k}(1+\xi_{pq}+\eta_{pq}c_{pq})z_1^pz_2^q.
\end{equation}
If $N=1$ (that is, $a$ and $b$ are coprime), then all equations \eqref{82} are absent,
and \eqref{83} yields immediately an equation for $c_{pq}$:
\[
-\frac{(iq)^l}{(ip)^k}(1+\xi_{pq}+\eta_{pq}c_{pq})=a_{1N}+\rho_{pq}+(1+\varkappa_{pq})c_{pq}
\]
or
\[
c_{pq}\left(1+\varkappa_{pq}+\frac{(iq)^l}{(ip)^k}\eta_{pq}\right)
=-a_{1N}-\rho_{pq}-\frac{(iq)^l}{(ip)^k}(1+\xi_{pq}).
\]
Recall that condition \eqref{uslovie} imposed on the indices involves two
parameters to be chosen, namely, $\delta$ and $C$. In the case in question,
any choice of $\delta >0$ will fit. Fixing some $\delta$, by Lemma \ref{decay}
we choose $C$ so large that for all $p\geqslant C$
the coefficient of $c_{pq}$ in the last equation
becomes greater than $1/2$. Then the $c_{pq}$ will be uniformly bounded above.
Next, increasing $C$ further if necessary and again invoking Lemma \ref{decay},
we can ensure that the coefficient $1+\xi_{pq}+\eta_{pq}c_{pq}$ in \eqref{9}
be also greater than $1/2$. This guarantees the required conditions if $N=1$.

But if $N>1$, we plug the expression \eqref{9} for $\varphi_1$ in the first equation
among \eqref{82} to obtain
\[
-\partial_1^k\varphi_2=a_{11}z_1^pz_2^q+\frac{(iq)^l}{(ip)^k}(1+\xi_{pq}+\eta_{pq}c_{pq})z_1^pz_2^q,
\]
so that
\[
\varphi_2=-\frac{1}{(ip)^k}\left(a_{11}+\frac{(iq)^l}{(ip)^k}(1+\xi_{pq}+\eta_{pq}c_{pq})\right)z_1^pz_2^q.
\]
Continuing in the same manner, we reach the last equation among \eqref{82}, which yields (we
put $t= \frac{(iq)^l}{(ip)^k}$):
\[
\varphi_N=-\frac{1}{(ip)^k}(Q(t)+t^{N-1}(1+\xi_{pq}+\eta_{pq}c_{pq}))z_1^pz_2^q,
\]
where $Q$ is a polynomial of degree at most $N-2$ with coefficients depending on the
quantities $a_{1j}$. Then \eqref{83} implies the following equation for $c_{pq}$:
\[
-tQ(t)-t^N(1+\xi_{pq}+\eta_{pq}c_{pq})=a_{1N}+\rho_{pq}+(1+\varkappa_{pq})c_{pq}
\]
or
\[
c_{pq}(1+\varkappa_{pq}+t^N\eta_{pq})=-tQ(t)-t^N(1+\xi_{pq})-a_{1N}-\rho_{pq}.
\]
Now, choosing $\delta$ small and then $C$ large, we again ensure the uniform boundedness
of the $c_{pq}$ and also a uniform lower estimate for the coefficient in \eqref{9}.

\textbf{Case 2:} $j_0>0$, $j_1<N$. Also, for definiteness, we assume that
every open interval $(0,j_0)$, $(j_0,j_1)$, and $(j_1,N)$ contains a natural
number. Then system \eqref{syst} looks like this:
\begin{gather}
-\partial_1^k\varphi_1=z_1^pz_2^q(\xi_{pq}+\eta_{pq}c_{pq});\label{101}\\
\partial_2^l\varphi_j-\partial_1^k\varphi_{j+1}=0,\quad 0<j<j_0;\label{102}\\
\partial_2^l\varphi_{j_0}-\partial_1^k\varphi_{{j_0}+1}=z_1^pz_2^q;\label{103}\\
\partial_2^l\varphi_j-\partial_1^k\varphi_{j+1}=a_{1j}z_1^pz_2^q,\quad j_0<j<j_1;\label{104}\\
\partial_2^l\varphi_{j_1}-\partial_1^k\varphi_{{j_1}+1}=(a_{1j_1}+c_{pq})z_1^pz_2^q;\label{105}\\
\partial_2^l\varphi_j-\partial_1^k\varphi_{j+1}=(a_{1j}+a_{2j}c_{pq})z_1^pz_2^q,\quad j_1<j<N;\label{106}\\
\partial_2^l\varphi_N=(a_{1N}+a_{2N}c_{pq}+\rho_{pq}+\varkappa_{pq}c_{pq})z_1^pz_2^q.\label{107}
\end{gather}
Resolving equation \eqref{107}, we obtain
\[
\varphi_N=\frac{1}{(iq)^l}(a_{1N}+\rho_{pq}+(a_{2N}+\varkappa_{pq})c_{pq})z_1^pz_2^q.
\]
Then the last equation among those labeled by \eqref{106} takes the form
\begin{multline*}
\partial_2^l\varphi_{N-1}=\\
\left[\frac{(ip)^k}{(iq)^l}(a_{1N}+\rho_{pq}+(a_{2N}+\varkappa_{pq})c_{pq})+a_{1,N-1}+a_{2,N-1}c_{pq}\right]z_1^pz_2^q \\
=\left[\frac{(ip)^k}{(iq)^l}(\rho_{pq}+\varkappa_{pq}c_{pq})+a_{1,N-1}+a_{1N}\frac{(ip)^k}{(iq)^l}+(a_{2,N-1} + \frac{(ip)^k}{(iq)^l}a_{2N})c_{pq}\right]z_1^pz_2^q.
\end{multline*}
We continue to move ``upwards" until we obtain the following equation for
$\varphi_{j_1+1}$ from the first equation among the group \eqref{106} (we again put $t=(iq)^l/(ip)^k$):
\begin{equation}\label{11}
\partial_2^l\varphi_{j_1+1}=
\left[\left(\frac 1t\right)^{N-j_1-1}(\rho_{pq}+\varkappa_{pq}c_{pq})+A\left(\frac 1t\right)
+B\left(\frac 1t\right)c_{pq}\right]z_1^pz_2^q.
\end{equation}
Here $A$ and $B$ are certain polynomials of degree not exceeding $N$ and with
coefficients depending on the quantities $a_{1j},a_{2j}$ only.

Now we find an equation for $\varphi_{j_1}$, moving ``down" form \eqref{101}. We have
\[
\varphi_1=-\frac{1}{(ip)^k}(\xi_{pq}+\eta_{pq}c_{pq})z_1^pz_2^q.
\]
Next, solving the equations in the group \eqref{102} consecutively, we arrive at
\[\
\varphi_{j_0}=-\frac{1}{(ip)^k}t^{j_0-1}(\xi_{pq}+\eta_{pq}c_{pq})z_1^pz_2^q,
\]
after which \eqref{103} yields
\begin{equation}\label{e12}
\varphi_{j_0+1}=-\frac{1}{(ip)^k}\left[1+t^{j_0}(\xi_{pq}+\eta_{pq}c_{pq})\right]z_1^pz_2^q.
\end{equation}
(Recall that, eventually, we must also ensure that the factor in square brackets
in \eqref{e12} be bounded away from zero.) The next equation (which is the first in the
group \eqref{104}) then yields
\[
\varphi_{j_0+2}=-\frac{1}{(ip)^k}[a_{1,j_0+1}+t+t^{j_0+1}(\xi_{pq}+\eta_{pq}c_{pq})]z_1^pz_2^q.
\]
Continuing, we reach the last equation among \eqref{104} and obtain
\begin{equation}\label{e13}
\varphi_{j_1}=-\frac{1}{(ip)^k}\left[D(t)+t^{j_1-1}(\xi_{pq}+\eta_{pq}c_{pq})\right]z_1^pz_2^q,
\end{equation}
where, again, $D$ is a polynomial of degree not exceeding $N$ and with coefficients depending
on the $a_{1j}$.

Combining \eqref{11}, \eqref{e13}, and \eqref{105}, we obtain an equation for $c_{pq}$:
\begin{multline*}
-t\left[D(t)+t^{j_1-1}(\xi_{pq}+\eta_{pq}c_{pq})\right]\\
=\left(\frac 1t\right)^{N-j_1}(\rho_{pq}+\varkappa_{pq}c_{pq})+\frac 1t A\left(\frac 1t\right)
+\frac 1t B\left(\frac 1t\right)c_{pq} + a_{1j_1} + c_{pq}
\end{multline*}
or
\begin{multline*}
c_{pq}\left(1 + \frac 1t B\left(\frac 1t\right)+\left(\frac 1t\right)^{N-j_1}\varkappa_{pq}
+t^{j_1}\eta_{pq}\right)=\\
-tD(t)-t^{j_1}\xi_{pq}-\frac 1t A\left(\frac 1t\right)-\left(\frac 1t\right)^{N-j_1}\rho_{pq} - a_{1j_1}.
\end{multline*}

We must ensure that the coefficient of $c_{pq}$ on the left be bounded away from zero
uniformly in $p$ and $q$. Recall that we have imposed the condition $\delta/2\leqslant |t|\leqslant\delta$,
where $\delta >0$ is still to be chosen. We fix it so big that $|t^{-1}B(t^{-1})|\leqslant \frac{1}{4}$
for all $t$ with $|t|\geqslant\delta$. Than we invoke the restriction $p\geqslant C$ and, using Lemma
\ref{decay}, choose $C$ so large that
\[
\left(\frac{2}{\delta}\right)^{N-j_1}|\varkappa_{pq}|+\delta^{j_1}|\eta_{pq}|<\frac 14
\]
for all $p\geqslant C$. This ensures a uniform upper bound for the $c_{pq}$. Increasing $C$
further if necessary, we ensure also that the factor in square brackets in \eqref{e12}
be bounded away from zero. So, we are done.

To accomplish our task, it remains to establish the embedding theorem (Theorem \ref{main2}).
This will be done in the next section.

\section{Embedding theorems}\label{S3}

\subsection{On the plane}\label{plane}As has already been said, first we shall prove Theorem
\ref{main3} and then deduce Theorem \ref{main2} from it. 
\subsubsection{Several observations}
Convolving with an approximate identity, we may
replace the measures $\mu_j$ with infinitely differentiable compactly supported
functions $m_j$ and assume that all $\varphi_j$ are also compactly supported and
infinitely differentiable \textit{a priori}. For definiteness, we assume that $k$ is odd.

Like in \cite{KM1}, it suffices to prove the following statement. We recall the standard notation $\mathcal{D}$ 
for the space of infinitely differentable compactly supported functions on the plane.
\begin{theorem}\label{T3}
Let $\sigma, \tau \in \mathbb{C}$ be two nonzero complex numbers such that the numbers
$\tau_1=(2\pi i)^{l-k}\tau$ and $\sigma_1=(-1)^{l-k}(2\pi i)^{l-k}\overline{\sigma}$
are different and have nonsero imaginary parts of the same sign. Suppose that functions
$f,g,f_1,g_1 \in\mathcal{D}(\mathbb{R}^2)$ satisfy the relations
\begin{equation}\label{eq}
(\partial_1^k -\tau\partial_2^l)f_1 =f;\quad (\partial_1^k -\sigma\partial_2^l)g_1=g.
\end{equation} 
Then 
\begin{equation}\label{bilin}
\left|\langle f_1,g_1\rangle_{W_2^{\frac{k-1}{2},\frac{l-1}{2}}(\mathbb{R}^2)}\right|
\leqslant C_{\tau,\sigma}\left\|f\right\|_{L^1(\mathbb{R}^2)}\left\|g\right\|_{L^1(\mathbb{R}^2)}.
\end{equation}
\end{theorem}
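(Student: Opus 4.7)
The plan is to move to the Fourier side and reduce the bilinear estimate to a boundedness property of an explicit kernel. Since $\hat f_1 = \hat f/[(2\pi i\xi)^k-\tau(2\pi i\eta)^l]$ and similarly for $\hat g_1$, a short computation --- using $\overline{(2\pi i\xi)^k-\sigma(2\pi i\eta)^l}=(-1)^k(2\pi i)^k[\xi^k-\sigma_1\eta^l]$ for real $\xi,\eta$, with $\sigma_1$ as in the statement --- gives
\[
\langle f_1,g_1\rangle_{W_2^{(k-1)/2,(l-1)/2}} = c\int_{\mathbb R^2} K(\xi,\eta)\,\hat f(\xi,\eta)\,\overline{\hat g(\xi,\eta)}\,d\xi\,d\eta,
\]
where
\[
K(\xi,\eta)=\frac{|\xi|^{k-1}|\eta|^{l-1}}{(\xi^k-\tau_1\eta^l)(\xi^k-\sigma_1\eta^l)}.
\]
If $h:=\mathcal F^{-1}(K)\in L^\infty(\mathbb R^2)$, then by Parseval $\int K\hat f\,\overline{\hat g}=\int (h\ast f)\,\bar g$, and Young's inequality gives \eqref{bilin} with $C_{\tau,\sigma}\leqslant c\|h\|_\infty$. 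So the task reduces to proving $\|h\|_\infty<\infty$.

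The proof of boundedness will proceed in two preparatory steps. First, $K$ satisfies the anisotropic homogeneity $K(\lambda^{1/k}\xi,\lambda^{1/l}\eta)=\lambda^{-1/k-1/l}K(\xi,\eta)$, which under inverse Fourier transform becomes $h(x,y)=h(\lambda^{1/k}x,\lambda^{1/l}y)$ for every $\lambda>0$. Taking $\lambda=|x|^{-k}$ shows $h(x,y)=h(\sgn(x),y/|x|^{k/l})$, so it suffices to bound $h(\pm1,t)$ uniformly in $t\in\mathbb R$. Second, apply partial fractions in $\xi^k$, which is legitimate because $\tau_1\neq\sigma_1$:
\[
K = \frac{|\xi|^{k-1}|\eta|^{l-1}}{(\tau_1-\sigma_1)\,\eta^l}\left[\frac{1}{\xi^k-\tau_1\eta^l}-\frac{1}{\xi^k-\sigma_1\eta^l}\right].
\]
Because $k$ is odd, $|\xi|^{k-1}=\xi^{k-1}$, and the identity $\xi^{k-1}/(\xi^k-c)=k^{-1}\sum_j(\xi-\xi_j)^{-1}$ with $\xi_j$ running over the $k$-th roots of $c$ reduces $\mathcal F^{-1}_\xi$ of each piece to an explicit residue sum. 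The nonvanishing imaginary parts of $\tau_1$ and $\sigma_1$ guarantee that for every $\eta\neq0$ none of the roots $\xi_j(\eta)$ is real, and closing the $\xi$-contour in the upper or lower half-plane according to $\sgn(x)$ produces, for $\pm x>0$, a finite sum of exponentials $e^{2\pi i x\xi_j(\eta)}$ times $|\eta|^{l-1}/\eta^l$.

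What then remains is the $\eta$-integration. After setting $|x|=1$ per Step~1 and the substitution $u=\eta^{l/k}$ (piecewise, with sign adjustments matched to the anisotropy), the integral becomes a one-dimensional oscillatory integral with phase $u\xi_{0}+tu^{k/l}$ and a principal-value weight $u^{-1}$, indexed by the single parameter $t$. Uniform boundedness in $t$ I expect to establish by combining a direct estimate for $|t|$ in a compact range (using that $\Im\xi_j(\eta)$ has fixed sign, so that the exponentials decay in $|\eta|$ rather than blow up) with an integration-by-parts / van der Corput argument for $|t|$ large. The main obstacle will be uniformity near the critical values of $t$ where the stationary points of the phase approach the singularity at $\eta=0$: it is here that the hypothesis ``$\Im\tau_1$ and $\Im\sigma_1$ have the same nonzero sign'' becomes essential, since it forces the $\xi$-residues from the $\tau_1$- and $\sigma_1$-pieces into the same half-plane for every $\eta$, so that they are genuinely subtracted in the partial fraction and the potential divergence at $\eta=0$ (logarithmic when $l$ is even) cancels between them.
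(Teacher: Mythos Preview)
Your approach is essentially the paper's: write the bilinear form via Fourier transforms, reduce to showing the inverse Fourier transform of the kernel $K$ is bounded, exploit anisotropic homogeneity to fix one spatial variable, compute the $\xi$-integral by residues, and finish with a one-dimensional integral in $\eta$. The paper carries out exactly this sequence, with the limiting region $\Omega_{\varepsilon,R}=\{\varepsilon\le|\eta|\le R\}$ making the interchange of integrals and the meaning of $\mathcal F^{-1}(K)$ rigorous.

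The one place where you diverge is the last step, and you are making it harder than it is. After the residue calculation, the paper obtains (for $\eta>0$, $a>0$ normalised to $2\pi a=1$) a finite sum of integrals
\[
\int_{\varepsilon}^{R} e^{ib\eta}\,\frac{e^{iu\eta^{l/k}}-e^{iv\eta^{l/k}}}{\eta}\,d\eta,
\]
where $u,v$ lie in the \emph{open upper half-plane}. Two elementary bounds finish the job, uniformly in the real parameter $b$ (your $t$) since $|e^{ib\eta}|=1$: for $\eta\le 1$ use $|e^{iu s}-e^{iv s}|\le C|u-v|\,s$ with $s=\eta^{l/k}$, giving an integrand $O(\eta^{l/k-1})\in L^1(0,1)$; for $\eta\ge 1$ use $|e^{iu\eta^{l/k}}|\le e^{-(\Im u)\eta^{l/k}}$ and likewise for $v$. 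No van der Corput or stationary-phase analysis is needed, and there is no ``critical value of $t$'' to worry about. Your reading of the same-sign hypothesis is correct in spirit: it guarantees that $\tau_1$ and $\sigma_1$ have the same number of $k$th roots in the upper half-plane (recall $k$ is odd), so the residues pair off and the difference in the numerator supplies the cancellation at $\eta=0$.
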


By angular brackets, we have denoted the scalar product in
$W_2^{\frac{k-1}{2},\frac{l-1}{2}}(\mathbb{R}^2)$.
It should be noted that
\begin{equation}\label{norma}
\langle f_1,g_1\rangle_{W_2^{\frac{k-1}{2},\frac{l-1}{2}}(\mathbb{R}^2)}=
\int_{\mathbb{R}^2}\hat{f_1}(\xi,\eta)\overline{\hat{g_1}(\xi,\eta)}|\eta|^{l-1}\xi^{k-1}\,d \xi d \eta
\end{equation}
(we have lifted the modulus sign in $|\xi|^{k-1}$ because $k-1$ is even).
When $k=l=1$, a version of Theorem \ref{T3} with $\tau$ and $\sigma$ real was
proved in \cite{KM1}. In that case, both the statement and the proof were
much similar to the classical Gagliardo--Nirenberg inequality
$\left\|f\right\|_{L^2(\mathbb{R}^2)}\leqslant\left\|\partial_1 f\right\|_{L^1(\mathbb{R}^2)}
\left\|\partial_2 f\right\|_{L^1(\mathbb{R}^2)}$ valid for all smooth compactly
supported functions on the plane. In the case where $k\ne l$, the proof will be
different and harder. In fact, Theorem \ref{T3} remains true if
$\sigma_1$ and $\tau_1$ are real and have different signs, but the proof of this requires more calculations, so we content
ourselves with the case stated above. Now we show how to derive Theorem \ref{main3} from
Theorem \ref{T3}.

Suppose we have identities like \eqref{syst} with functions $m_j\in\mathcal{D}(\mathbb{R}^2)$
in place of the measures $\mu_j$, where the $\varphi_j$ also belong to $\mathcal{D}(\mathbb{R}^2)$.
We must prove inequality \eqref{est}, again with the $m_j$ in place of the $\mu_j$.
Taking a complex number $s$, consider the function $\psi_{s} = \sum\limits_{j=1}^{N}s^j\varphi_j$.
If we multiply the $j$th equation in \eqref{syst} by $s^j$ and then add all $N+1$ resulting equations,
we arrive at the identity $(\partial_1^k - s \partial_2^l)\psi_s = M_s$, where
$M_s = \sum\limits_{j=0}^{N}s^j m_j$. Writing another such identity with some number $t$ in place of
$s$, we obtain a system of the form \eqref{eq}.
The $L^1$-norms of the functions $M_s$ and $M_t$ do not exceed
$\max(1,|s|^N, |t|^N)\sum\limits_{j=0}^{N}\|m_j\|_{L^1}$.
So, if $s$ and $t$ satisfy the assumptions imposed on the parameters in Theorem \ref{T3}, we obtain
\[
\left|\langle\varphi_s,\varphi_t\rangle_{W_2^{\frac{k-1}{2},\frac{l-1}{2}}(\mathbb{R}^2)}\right|
\leqslant C_{s,t}\left(\sum\limits_{j=0}^{N}\|m_j\|_{L^1}\right)^2.
\]

Next, take $2n$ pairwise distinct numbers $\{s_j\}$ and $\{t_j\}$, $j=1,\dots,N$ for which
the above inequality holds true. 
We must now
estimate the quantities $a_{ij}=\langle \varphi_i,\varphi_j \rangle_{W_2^{\frac{k-1}{2},\frac{l-1}{2}}(\mathbb{R}^2)}$
(in fact, only for $i=j$, but it would be improper to impose this restriction at the moment).

Expanding, we obtain
\[
\langle \psi_{s_i},\psi_{t_j}\rangle_{W_2^{\frac{k-1}{2},\frac{l-1}{2}}(\mathbb{R}^2)}
= \sum\limits_{(p,q) \in [1,N]^2} (s_i)^{p} (t_j)^{q}a_{pq},
\]
where $i,j=1,\dots,N$. Thus, we have a system of $N^2$ linear equations for
the numbers $a_{pq}$. The matrix of this system is the tensor product of the matrices
$(s_i^{p})_{i,p}$ and $(t_j^{q})_{j,q}$; its determinant is the product of two
Vandermonde determinants of order $N$. Resolving this system, we
immediately arrive at the required estimate.
\subsubsection{The plot} Until the end of Subsection \ref{plane}, we shall
deal with Theorem \ref{T3}. Relations \eqref{eq} allow us to write out
an explicit integral formula for the scalar product to be estimated, by using
the Fourier transformation and its inverse. This formula
involves an improper integrals in two variables. Invoking residue
calculus, we reduce it to certain one-dimensional integrals.
Appropriate estimates for these integrals will finish the proof.

\subsubsection{Reduction to a two-dimensional improper integral}The functions $f_1$ and $g_1$
in \eqref{bilin} are infinitely differentiable and compactly supported \textit{a priori}.
Thus, their Fourier transforms decay rapidly at infinity, and the integral
on the right in \eqref{norma} exists in the usual Lebesgue sense. So, we can represent this integral
as a limit,
\[
\int\limits_{\mathbb{R}^2} =
\lim_{\substack{\varepsilon\rightarrow 0 \\ R \rightarrow \infty}}\,\int\limits_{\Omega_{\varepsilon,R}},
\]
where $\Omega_{\varepsilon,R}=\{\xi,\eta\in\mathbb{R}^2\colon \varepsilon\leqslant |\eta|\leqslant R\}$. 
Next, we replace the Fourier transforms in the integrand by their expressions
in terms of $f$ and $g$ found from the formulas
\[
((2\pi i\xi)^k - \tau (2\pi i\eta)^l)\hat{f_1}(\xi,\eta) =
\hat{f}(\xi,\eta); \quad ((2\pi i\xi)^k - \sigma (2\pi i\eta)^l)\hat{g_1}(\xi,\eta) = \hat{g}(\xi,\eta),
\]
which are the Fourier images of formulas \eqref{eq}. So, we must estimate the quantity
\[
\lim_{\substack{\varepsilon\rightarrow 0 \\ R \rightarrow \infty}}\,
\int\limits_{\Omega_{\varepsilon,R}}\frac{|\eta|^{l - 1}\xi^{k - 1}\hat{f}(\xi,\eta)\overline{\hat{g}(\xi,\eta)} d\xi d\eta}{((2\pi i\xi)^k -
\tau (2\pi i\eta)^l)((-2\pi i\xi)^k - \overline{\sigma} (-2\pi i\eta)^l)}.
\]
The denominator of the integrand does not vanish on $\mathbb{R}^2$ except at zero because of the assumptions about
$\sigma$ and $\tau$.
Then (for $\varepsilon$ and $R$ fixed) we replace $\hat{f}$ and $\hat{g}$ in the last
formula by their definitions in terms of $f$ and $g$, and change the order of integration:
\begin{equation}\label{quantity}
\lim_{\substack{\varepsilon \rightarrow 0 \\ R \rightarrow \infty}}
\iint\limits_{\substack{\supp f \\ \times\supp g}} F(\varepsilon,R,x_1,x_2,y_1,y_2)
f(x_1,x_2)\overline{g(y_1,y_2)} dy_1 dy_2 dx_1 dx_2,
\end{equation}
where
\begin{equation}\label{integrand}
F(\varepsilon,R,x_1,x_2,y_1,y_2)=\int\limits_{\Omega_{\varepsilon,R}}\frac{|\eta|^{l-1}\xi^{k-1}
e^{2\pi i ((x_1 - y_1)\xi + (x_2 - y_2)\eta)}}{((2\pi i\xi)^k - \tau (2\pi i\eta)^l)((-2\pi i\xi)^k -
\overline{\sigma} (-2\pi i\eta)^l)} d\xi d\eta.
\end{equation}
(The change of the order of integration is justified because the modulus of
the integrand in \eqref{integrand} is summable indeed over $\Omega_{\varepsilon,R}$ and does not
depend on $x_1$, $x_2$, $y_1$, and $y_2$.)

To prove Theorem \ref{T3}, we must show that the modulus of the quantity
\eqref{quantity} does not exceed $C\|f\|_1\|g\|_1$. For this, we prove that \textit{the function}
\eqref{integrand} \textit{is bounded uniformly in all $0<\varepsilon\leqslant R<\infty$ and almost all quads of reals
$x_1,\,x_2,\,y_1$, $y_2$}. The things become slightly more transparent if we integrate in
\eqref{integrand} first in $\xi$ and then in $\eta$, and in the inner integral in $\xi$ introduce
a new variable $\rho$ by setting $\xi=\rho |\eta|^{l/k}$. This yields the formula
\[
F(\varepsilon,R,x_1,x_2,y_1,y_2)=\frac{1}{(2\pi)^k}\int\limits_{\varepsilon\leqslant |\eta|\leqslant R}
|\eta|^{-1}\int\limits_{-\infty}^{+\infty}\frac{\rho^{k-1}e^{2\pi i(a\rho|\eta|^{l/k}+b\eta)}}{(\rho^k -
\tau_1(\sgn\eta)^l)(\rho^k -\sigma_1(\sgn\eta)^l)}\,d\rho d\eta,
\]
where we have put $a=x_1 - y_1$, $b=x_2-y_2$, and $\tau_1,\,\sigma_1$ were introduced in the statemant of
Theorem \ref{T3}.

Here integration in $\eta$ is over the union $[-R,-\varepsilon]\cup [\varepsilon,R]$, and we shall prove
the boundedness of the integral over each of these two intervals separately. Because of symmetry, we
consider only the interval $[\varepsilon,R]$. (Then $\sgn\eta$ in the denominator disappears. The argument below
will depend heavily on the condition that $\sigma_1$ and $\tau_1$ have imaginary parts of the same sign. When
treating the other interval, observe that multiplication of $\tau_1$ and $\sigma_1$ by $(-1)^l$ again yields
two numbers with the same property.) For definiteness,
we assume that $a>0$ (the opposite case reduces to this one if we change $\rho$ by $-\rho$, which again boils down to a possible
simultaneous change of sign of $\sigma_1$ and $\tau_1$; note that we may drop the case of $a=0$ because
this corresponds to a set of measure $0$). After that, we take
$(2\pi a)^{k/l}\eta$ for a new variable in the outer integral; this will modify the parameters $b$, $\varepsilon$, and $R$,
but will allow us to assume that $2\pi a=1$. So, finally, we must show that the following integral is bounded uniformly
in $0\leqslant\varepsilon <R$ and $b$:
\begin{equation}\label{ff}
\int\limits_{\varepsilon}^R \eta^{-1}\left(\int\limits_{-\infty}^{\infty}
\frac{\rho^{k-1}e^{i(\rho|\eta|^{l/k}+b\eta)}}{(\rho^k -\tau_1)(\rho^k -\sigma_1)}\right)\,d\rho\,d\eta.
\end{equation}
%
\subsubsection{Reduction to estimates for one-dimensional integrals}\label{s22}We shall calculate the
integral with respect to $\rho$ (i.e., the integral in parentheses in the above expression) with the
help of the residue formula, perceiving $\rho$ as a complex variable. Since the integrand
decays rapidly at infinity in the upper half-plane, integration over the countour that consists of the
interval $[-A, A]$ and the upper part of the circle of radius $A$ and centered at zero shows,
after the limit passage as $A\to\infty$, that the integral in question is $2\pi i$ times the
sum of the residues at the poles of the integrand in the upper half-plane. All these poles are simple and are $k$th
roots of $\tau_1$ or $\sigma_1$. Let $u^k=\tau_1$ (and $\Re u > 0$). Perceiving the integrand in question as
$\frac{\varphi(\rho)}{\psi(\rho)}$ with $\psi(\rho)=\rho^k -\tau_1$, we use the fact that $\varphi$ is regular at
$u$ to conclude that the residue at $u$ is
\[
\frac{\varphi(u)}{\psi^{\prime}(u)}=\frac{u^{k-1}e^{i(u|\eta|^{l/k}+b\eta)}}{ku^{k-1}(\tau_1 -\sigma_1)}
=\frac{e^{i(u|\eta|^{l/k}+b\eta)}}{k(\tau_1 -\sigma_1)}.
\]
Similarly, if $v^k=\sigma_1$ (and $\Re\sigma_1>0$), the residue at $v$ is equal to
\[
\frac{e^{i(v|\eta|^{l/k}+b\eta)}}{k(\sigma_1 -\tau_1)}.
\]
We shall need the following easy statement.
\begin{lem}If $z$ is nonreal and $k\in\mathbb{N}$, then the number of $k$th roots of $z$ in the
upper half-plane is $k/2$ if $k$ is even, $(k+1)/2$ if $k$ is odd and $\Im z>0$, and $(k-1)/2$ if
$k$ is odd and $\Im z<0$.
\end{lem}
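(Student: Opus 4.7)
The plan is a direct counting argument. Write $z = |z|e^{i\theta}$ with $\theta \in (0, 2\pi)$, $\theta \neq \pi$; such a choice is possible because $z$ is nonreal. The $k$-th roots of $z$ are then
\[
r_j = |z|^{1/k}\exp\bigl(i(\theta + 2\pi j)/k\bigr), \qquad j = 0, 1, \dots, k-1,
\]
and I must count how many of the arguments $\phi_j = (\theta + 2\pi j)/k$ fall in the open interval $(0, \pi)$ modulo $2\pi$. First I would note that none of the $r_j$ is real: if $r_j$ were real, then $z = r_j^k$ would be real, contradicting the hypothesis. Hence counting the $r_j$ in the upper half-plane is the same as counting indices $j$ with $\phi_j \in (0, \pi)$.

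For $k$ even I would invoke antipodal symmetry: whenever $r$ is a $k$-th root of $z$, so is $-r$, and exactly one of $\{r,-r\}$ lies in the upper half-plane since neither is real. The $k$ roots split into $k/2$ such pairs, so exactly $k/2$ lie in the upper half-plane, as claimed.

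For $k$ odd, no such pairing exists and I would count directly. After shifting $j$ so that $\phi_0,\dots,\phi_{k-1} \in [0, 2\pi)$, the inequality $\phi_j < \pi$ is equivalent to $j < k/2 - \theta/(2\pi)$. If $\Im z > 0$, then $\theta \in (0,\pi)$, so the right-hand side is a noninteger lying strictly between $(k-1)/2$ and $k/2$; the admissible values $j \geqslant 0$ are $0,1,\dots,(k-1)/2$, giving $(k+1)/2$ roots. If $\Im z < 0$, then $\theta \in (\pi, 2\pi)$, so the right-hand side lies strictly between $(k-3)/2$ and $(k-1)/2$; this time one gets $(k-1)/2$ admissible indices.

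There is no real obstacle here — the statement is elementary, and the only care needed is to verify that the bound $k/2 - \theta/(2\pi)$ is never itself an integer, which is precisely what prevents some $\phi_j$ from coinciding with $0$ or $\pi$; the condition $\theta \notin \{0,\pi\}$ (equivalent to $z$ being nonreal) guarantees this.
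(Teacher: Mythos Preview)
Your argument is correct and follows essentially the same approach as the paper: parametrize the $k$th roots by their arguments $(\theta+2\pi j)/k$ and count how many fall in $(0,\pi)$. The only differences are cosmetic---you use the antipodal pairing $r\leftrightarrow -r$ for even $k$ instead of counting directly, and you handle the case $\Im z<0$ by the same inequality rather than reducing it to $\Im z>0$ via conjugation as the paper does.
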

\begin{proof}
We may assume that $|z|=1$. If $z$ lies in the upper half-plane, write it in the form $e^{it}$ with $0<t<\pi$. All $k$th
roots of $z$ are given by $e^{i(t+2\pi s)/k}$, $s=0,\dots,k-1$. Now,
\[
\pi\frac{2s}{k}<\frac{t+2\pi s}{k}<\pi\frac{2s+1}{k}.
\]
From this double inequality it easily follows that the biggest $s$ for which the argument $(t+2\pi s)/k$ is still smaller than
$\pi$ is $\frac{k}{2}-1$ if $k$ is even and $\frac{k-1}{2}$ if $k$ is odd, and we are done.

The case where $z$ lies in the lower half-plane is reduced to the preceding one by conjugation.
\end{proof}

Thus, under our assumptions the equations $\rho^k=\tau_1$ and $\rho^k=\sigma_1$ have one and the same number of
roots in the upper half plane. This shows that, up to a constant factor, the integral \eqref{ff} is equal
to a sum of $(k\pm 1)/2$ expressions of the form
\[
\frac{1}{k(u-v)}\int\limits_{\varepsilon}^R e^{ib\eta}\frac{e^{iu|\eta|^{l/k}}-e^{iv|\eta|^{l/k}}}{\eta}d\eta,
\]
where $u$ and $v$ are some $k$th roots of, respectively, $\tau_1$ and $\sigma_1$ in the upper half-plane.
(Note that, happily, the denominators in the above two formulas for residues are opposite to each other.)

\subsubsection{Estimates for one-dimensional integrals}\label{s23}
It is quite easy to realize that it suffices to estimate the above integrals
\begin{itemize}
\item with $R=1$, uniformly in $b$ and $0<\varepsilon <1$;
\item with $\varepsilon =1$, uniformly in $b$ and $R>1$.
\end{itemize}
We recall that $b$ is real, so we estimate the absolute value of the integrand by
$C|u-v|\eta^{\frac lk -1}$ in the first case and by
$e^{-\Im u |\eta|^{l/k}}+e^{-\Im v |\eta|^{l/k}}$ in the second. This finishes the proof
of Theorem \ref{T3} and, with it, of Theorem \ref{main3}.

\subsection{On the torus}In this subsection we prove Theorem \ref{main2}.
It should be noted that, when $k=l=1$, that theorem  was proved in \cite{KM1} \textit{by adjustment
of an argument for the plane to the periodic case}. In the present setting, the possibility of
such an adjustment
is questionable (it is not clear what should replace the residue theorem for
functions of discrete argument). So, we shall \textit{reduce} Theorem \ref{main2}
to its planar counterpart Theorem \ref{main3} rather than adjust the proof.
%

In the statement of Theorem \ref{main2}, by convolving with approximate identities,
we may assume that the measures $\mu_j$ and the functions $\varphi_j$ (see \eqref{syst})
are (proper) trigonometric polynomials. An immediate idea of reduction is
to perceive every function on $\mathbb{T}^2$ as a periodic function on the plane.
However, Theorem \ref{main3} applies to compactly supported functions, so that, apparently,
we must cut these periodic functions smoothly.
Eventually, this will work, but not in the most naive form.

We proceed to the details. First, we need yet another $L^1$-multiplier lemma.
\begin{lemma}\label{L1lemma}
Let $\tau$ be a complex number such that $i^{l-k}\tau$ is pure imaginary. Then
\begin{equation}\label{l1est}
\|\partial_1^{k-1}f_1\|_{L^1(\mathbb{T}^2)},\|\partial_2^{l-1}f_1\|_{L^1(\mathbb{T}^2)}
\leqslant C_{\tau}\|f\|_{L^1(\mathbb{T}^2)},
\end{equation}
where $f$ and $f_1$ are proper functions \textup{(}say, trigonometric polynomials for
definiteness\textup{)} related by the first equation in \eqref{eq}, i.e.,
$(\partial_1^k -\tau\partial_2^l)f_1 =f$.
\end{lemma}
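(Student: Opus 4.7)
The plan is to recognize the lemma as an immediate consequence of the multiplier result in Lemma~\ref{L1}, applied to the operator $R = \partial_1^k - \tau\partial_2^l$ with the two subordinate monomials $\partial_1^{k-1}$ and $\partial_2^{l-1}$. So my main task reduces to verifying the hypotheses of Lemma~\ref{L1} for this specific $R$; the rest is formal.

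First I will check that $R$ satisfies the ellipticity condition of Definition~\ref{ellipticity}. The characteristic polynomial is
\[
P_R(x,y) = (2\pi i x)^k - \tau (2\pi i y)^l = (2\pi i)^k\bigl[x^k - (2\pi)^{l-k}(i^{l-k}\tau)\,y^l\bigr].
\]
By assumption $i^{l-k}\tau$ is pure imaginary, and I may assume $\tau\neq 0$ (otherwise the equation $\partial_1^k f_1 = f$ with $f,f_1$ proper is elementary). For real $x,y$ with $xy\neq 0$, the bracket is the difference of the nonzero real number $x^k$ and the nonzero pure imaginary number $(2\pi)^{l-k}(i^{l-k}\tau)y^l$, hence does not vanish. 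Thus $P_R$ vanishes on $\mathbb{R}^2$ only on the coordinate axes, which is exactly the ellipticity requirement for the single-link broken line $\mathcal{Z}$ with nodes $Z_1=(k,0)$ and $Z_2=(0,l)$, whose only $\Lambda$-senior part is $R$ itself.

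Next I will verify that the monomials $\partial_1^{k-1}$ and $\partial_2^{l-1}$ are subordinate to $R$. The admissible line here is $\Lambda\colon\frac{x}{k}+\frac{y}{l}=1$, the core of $\mathcal{Z}$ is the segment $[Z_1,Z_2]$, and the biindices $(k-1,0)$ and $(0,l-1)$ satisfy $\frac{k-1}{k}<1$ and $\frac{l-1}{l}<1$ respectively, so both points lie strictly under $\Lambda$ inside the region bounded by $\mathcal{Z}$ and the two coordinate segments. This matches the definition of subordinate monomials given in Section~\ref{S1}.

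With these two verifications in hand, Lemma~\ref{L1} immediately gives that the Fourier multipliers with symbols
\[
\frac{(2\pi i m)^{k-1}}{(2\pi i m)^k-\tau(2\pi i n)^l}\quad\text{and}\quad\frac{(2\pi i n)^{l-1}}{(2\pi i m)^k-\tau(2\pi i n)^l}
\]
(both extended by $0$ whenever $mn=0$) are bounded from $L^1_0(\mathbb{T}^2)$ to itself. Taking Fourier coefficients in the identity $(\partial_1^k-\tau\partial_2^l)f_1=f$ yields $\hat f_1(m,n)=\hat f(m,n)\bigl/\bigl((2\pi i m)^k-\tau(2\pi i n)^l\bigr)$ for $mn\neq 0$, and since both $f$ and $f_1$ are proper, the functions $\partial_1^{k-1}f_1$ and $\partial_2^{l-1}f_1$ are produced from $f$ precisely by these two multipliers, yielding \eqref{l1est}. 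No serious obstacle arises: the entire content of the lemma is that the specific operator $R=\partial_1^k-\tau\partial_2^l$ falls within the ellipticity framework already set up, once the hypothesis $i^{l-k}\tau\in i\mathbb{R}$ is unpacked.
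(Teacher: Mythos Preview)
Your proof is correct and matches the paper's approach: both invoke Lemma~\ref{L1} after checking that $R=\partial_1^k-\tau\partial_2^l$ satisfies the ellipticity condition of Definition~\ref{ellipticity} and that $\partial_1^{k-1}$, $\partial_2^{l-1}$ are subordinate monomials. One minor slip: the case $\tau=0$ is not actually ``elementary'' for the $\partial_2^{l-1}$ estimate (the multiplier $(2\pi in)^{l-1}/(2\pi im)^k$ is unbounded on proper frequencies), but $\tau\neq 0$ is clearly the intended hypothesis and the only case used downstream.
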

\begin{proof}
We prove the lemma for $\partial_2^{l-1}f_1$, the other case is similar.
The Fourier coefficients of $f_1$ and $f$ are related as follows:
\begin{equation}\label{liason}
\hat{f_1}(m,n) = ((2\pi i m)^k - \tau (2\pi i n)^l)^{-1}\hat{f}(m,n).
\end{equation}
The function $((2\pi i m)^k - \tau (2\pi i n)^l)^{-1}$ is bounded on
$\mathbb{Z}^2\setminus\{(0,0)\}$ because we have assumed that $i^{k-l} \tau$ is pure imaginary.
We see that
$\partial_2^{l-1}f_1$ is obtained from $f$ by application of the multiplier with the following symbol:
\[
\frac{(2\pi in)^{l-1}}{((2\pi i m)^k - \tau (2\pi i n)^l)},\quad (m,n)\neq (0,0).
\]
This multiplier is bounded on $L^1(\mathbb{T}^2)$.
Again, this follows from Lemma \ref{L1}. See the hints to the proof of Lemma \ref{L2}.
\end{proof}

The above lemma yields a statement in the
spirit of Theorem \ref{main2} for the torus, yet in the $L^1$-metric.

\begin{lemma}\label{L2lemma}
Let proper trigonometric polynomials $\mu_j$, $j=0,\dots,N$, and $\varphi_j$, $j=1,\dots,N$, satisfy 
\eqref{syst}. Then
\[
\sum\limits_{j = 1}^{N}\|\partial_1^{k-1}\varphi_j\|_{L_1(\mathbb{T}^2)},\,\,
\sum\limits_{j = 1}^{N}\|\partial_2^{l-1}\varphi_j\|_{L_1(\mathbb{T}^2)}\leqslant
C\sum\limits_{j = 0}^{N}\|\mu_j\|_{L^1(\mathbb{T}^2)},
\]
where $C$ does not depend on the functions involved.
\end{lemma}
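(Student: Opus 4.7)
The plan is to mimic the Vandermonde reduction used in Subsection \ref{plane} to pass from Theorem \ref{T3} to Theorem \ref{main3}, but now with Lemma \ref{L1lemma} playing the role of Theorem \ref{T3}. For a nonzero complex parameter $s$, set
\[
\psi_s = \sum_{j=1}^N s^j\varphi_j, \qquad M_s = \sum_{j=0}^N s^j\mu_j.
\]
Exactly the same manipulation of \eqref{syst} as in Subsection \ref{plane} (multiply the $j$th equation by the appropriate power of $s$ and add) yields the identity $(\partial_1^k - s\partial_2^l)\psi_s = c(s)M_s$, where $c(s)$ is a fixed monomial in $s$. Both $\psi_s$ and $M_s$ are proper trigonometric polynomials, since each $\varphi_j$ and each $\mu_j$ is.

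Fix $N$ pairwise distinct nonzero numbers $s_1,\ldots,s_N$ in the set
\[
\Sigma = \{s\in\mathbb{C}\setminus\{0\} \colon i^{l-k}s\ \text{is pure imaginary}\}.
\]
Since $\Sigma$ is a real line through the origin with the origin removed, such a choice is clearly possible. For each $r$, Lemma \ref{L1lemma} applied to the pair $(c(s_r)M_{s_r}, \psi_{s_r})$ with $\tau = s_r$ gives
\[
\|\partial_2^{l-1}\psi_{s_r}\|_{L^1(\mathbb{T}^2)} \leqslant C_r \sum_{j=0}^N \|\mu_j\|_{L^1(\mathbb{T}^2)},
\]
where the constant $C_r$ depends only on $N$ and the fixed value $s_r$.

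Finally, the identities $\partial_2^{l-1}\psi_{s_r} = \sum_{j=1}^N s_r^j\partial_2^{l-1}\varphi_j$, $r = 1,\ldots,N$, form a linear system in the $N$ unknowns $\partial_2^{l-1}\varphi_j$. Its coefficient matrix $(s_r^j)_{r,j}$ factors as the Vandermonde $(s_r^{j-1})_{r,j}$ times the diagonal matrix $\mathrm{diag}(s_1,\ldots,s_N)$, so it is invertible. Inverting expresses each $\partial_2^{l-1}\varphi_j$ as a fixed linear combination of the $\partial_2^{l-1}\psi_{s_r}$, which yields the desired bound on $\sum_j\|\partial_2^{l-1}\varphi_j\|_{L^1(\mathbb{T}^2)}$. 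The estimate for $\sum_j\|\partial_1^{k-1}\varphi_j\|_{L^1(\mathbb{T}^2)}$ follows in the same way from the other conclusion of Lemma \ref{L1lemma}. There is no real obstacle here: once Lemma \ref{L1lemma} is in hand everything reduces to this routine linear-algebra trick; the only point worth noting is that the hypothesis ``$i^{l-k}s$ pure imaginary'' restricts $s$ to a one-dimensional real line in $\mathbb{C}$, but this line still supplies the $N$ distinct nonzero values we need.
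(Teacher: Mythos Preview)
Your proof is correct and follows essentially the same approach as the paper's: form the combinations $\psi_s$, apply Lemma \ref{L1lemma} at $N$ distinct parameters $s_r$ with $i^{l-k}s_r$ pure imaginary, and invert the resulting Vandermonde system to recover the individual $\partial_2^{l-1}\varphi_j$ (and likewise for $\partial_1^{k-1}$). Your added remarks---that the admissible parameter set is a punctured real line in $\mathbb{C}$ and that the coefficient matrix factors as a Vandermonde times a nonsingular diagonal---are accurate and make the linear-algebra step fully explicit.
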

\begin{proof}
We argue as we did when deriving Theorem \ref{main3} from Theorem \ref{T3}.
Specifically, we construct the functions $\varphi_{s_v}$ in the same way as in that argument.
The numbers $s_v$ must be chosen so that the $i^{l-k}s_v$ be pure imaginary.
We apply Lemma \ref{L1lemma} to show that the $L^1$-norms of the functions
$\partial_1^{k-1}\varphi_{s_v}$ and $\partial_2^{l-1}\varphi_{s_v}$
are bounded in terms of the right-hand side of \eqref{est1}.
But we saw that the initial functions $\varphi_j$ are linear combinations of the $\varphi_{s_v}$
if we involve at least $N$ mutually distinct parameters $s_v$ (a system of linear equations
with a Vandermonde determinant arises). So, we are done.
\end{proof}

We need yet another fairly weak auxiliary estimate.
\begin{lemma}\label{L3lemma}
Let the assumptions of Lemma \textup{\ref{L2lemma}} be satisfied, and let $l>1$. Then
\[
\sum\limits_{j = 1}^{N}\|\varphi_j\|_{W_2^{\frac{k-1}{2}, 0}(\mathbb{T}^2)}
\leqslant C\sum\limits_{j = 0}^{N}\|\mu_j\|_{L^1(\mathbb{T}^2)}.
\]
\end{lemma}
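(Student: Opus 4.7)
The plan is to work directly on the Fourier side with Plancherel, deriving a pointwise bound on $\hat\varphi_j(m,n)$ by exploiting two alternative representations of the solution of the cascade \eqref{syst} at each nonzero frequency.

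Since the $\varphi_j$ are proper, Plancherel gives
\[
\|\varphi_j\|_{W_2^{\frac{k-1}{2},0}}^{2}
= \sum_{m,n \neq 0} |m|^{k-1}\,|\hat\varphi_j(m,n)|^{2}.
\]
Solving the Fourier image of \eqref{syst} from the top equation downward yields
\[
\hat\varphi_j(m,n) = -\sum_{i=0}^{j-1}\frac{(2\pi in)^{l(j-1-i)}}{(2\pi im)^{k(j-i)}}\,\hat\mu_i(m,n),
\]
and from the bottom upward
\[
\hat\varphi_j(m,n) = \sum_{i=j}^{N}\frac{(2\pi im)^{k(i-j)}}{(2\pi in)^{l(i-j+1)}}\,\hat\mu_i(m,n).
\]
The two formulas coincide on $\{m,n\neq 0\}$ because, after division by $(2\pi in)^{Nl}$, the Fourier version of the consistency identity \eqref{annu} is precisely the relation equating them.

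The idea is now to use the top representation on $\{|m|^{k}\geq |n|^{l}\}$ and the bottom one on $\{|n|^{l}>|m|^{k}\}$.  In the first region every summand of the top formula is bounded by $|\hat\mu_i(m,n)|/|m|^{k}$, since the cross-factor $(|n|^{l}/|m|^{k})^{j-1-i} \leq 1$; symmetrically on the second region every summand of the bottom formula is bounded by $|\hat\mu_i(m,n)|/|n|^{l}$.  Combined with the trivial estimate $|\hat\mu_i(m,n)| \leq \|\mu_i\|_{L^1} \leq M$, where $M := \sum_{i=0}^{N}\|\mu_i\|_{L^1}$, one arrives at the uniform pointwise bound
\[
|\hat\varphi_j(m,n)| \leq \frac{C\,M}{\max(|m|^{k},\,|n|^{l})}, \qquad m,n \neq 0.
\]

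Substituting into Plancherel reduces everything to the purely arithmetic claim
\[
\sum_{m,n \neq 0}\frac{|m|^{k-1}}{\max(|m|^{k},|n|^{l})^{2}} < \infty,
\]
which is exactly where the hypothesis $l>1$ enters.  On $|m|^{k}\geq |n|^{l}$ the count of admissible $n$ is $\lesssim |m|^{k/l}$, so the contribution is $\sum_{m}|m|^{-(k+1-k/l)}$, finite iff $l>1$; on $|n|^{l}>|m|^{k}$ the inner sum $\sum_{|m|^{k}<|n|^{l}}|m|^{k-1}$ is $\lesssim |n|^{l}$, giving $\sum_{n}|n|^{-l}$, again finite iff $l>1$.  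Together these yield $\|\varphi_j\|_{W_2^{\frac{k-1}{2},0}}\leq CM$ for each $j$, and summing over the finite range $j=1,\dots,N$ finishes the proof.  The main conceptual obstacle is spotting the hybrid trick of switching between the two Fourier representations according to the dominant direction on the Fourier plane; everything else is routine.  Note that the bound $|\hat\mu_i|\leq \|\mu_i\|_1$ is extremely crude, which is precisely why this route can only deliver the weaker estimate of Lemma \ref{L3lemma} and not the full Theorem \ref{main2}.
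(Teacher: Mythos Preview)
Your argument is correct. The two explicit Fourier formulas for $\hat\varphi_j$ are right, the region-splitting bound $|\hat\varphi_j(m,n)|\le CM/\max(|m|^k,|n|^l)$ follows exactly as you say, and the final lattice sum converges precisely when $l>1$.

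Your route differs from the paper's. There the lemma is proved by recycling the $\psi_s$ device used in the derivation of Theorem~\ref{main3} from Theorem~\ref{T3}: one forms $\psi_s=\sum_j s^j\varphi_j$, which satisfies a single equation $(\partial_1^k-s\partial_2^l)\psi_s=M_s$, chooses $s$ so that $i^{l-k}s$ is pure imaginary (hence $|(2\pi im)^k-s(2\pi in)^l|\asymp\max(|m|^k,|n|^l)$), and then observes that the sequence $|m|^{(k-1)/2}\big/\big((2\pi im)^k-s(2\pi in)^l\big)$ is square-summable over $\mathbb{Z}^2\setminus\{0\}$ exactly when $l>1$; finally a Vandermonde inversion recovers each $\varphi_j$ from $N$ such $\psi_{s_v}$. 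Both approaches land on the very same lattice sum $\sum_{m,n\ne0}|m|^{k-1}/\max(|m|^k,|n|^l)^2$. What your method buys is self-containment: no auxiliary parameters, no special choice making the symbol elliptic, and no Vandermonde step, at the modest cost of writing out the cascade solution twice. The paper's version is shorter in context because the $\psi_s$ trick and the Vandermonde argument are already in place from the preceding lemmas, so the proof reduces to one new observation about square-summability of a multiplier.
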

\begin{proof}We proceed as in the above proof.
Specifically, we observe that, if the Fourier coefficients of two functions $f_1$ and $f$ are related by
formula \eqref{liason} and $f$ is integrable over the torus, then $f_1$ is square integrable
with an appropriate norm estimate, because the quantities $\frac{|m|^{(k-1)/2}}{(2\pi i m)^k - \tau (2\pi i n)^l}$
form a square summable sequence whenever $k,l\in\mathbb{N}$ and $k>1$. (It is still convenient to take $\tau$ in
such a way that $i^{l-k}\tau$ is pure imaginary.)
Next, we again invoke the functions $\varphi_{s_v}$ as in the proof of Lemma \ref{L2lemma}, etc.
\end{proof}
Surely, we can interchange the roles of $k$ and $l$, obtaining a similar statement for $l>1$.
Also, if $\max{k,l}>1$ and the $\varphi_j$ are proper, the above estimates imply
\[
\sum\limits_{j=1}^{N}\|\varphi_j\|_{L^2(\mathbb{T}^2)}
\leqslant C\sum\limits_{j=0}^{N}\|\mu_j\|_{L^1(\mathbb{T}^2)}.
\]
After these preparations, we can carry Theorem \ref{main3} over to the torus.
Consider a cut-off function $\chi$ on $\mathbb{R}^2$. We choose it nonnegative, finitely
supported, infinitely differentiable, and equal to $1$ on $[-2,2]\times [-2,2]$.
We define an operator $P$ on smooth functions on the torus by the formula 
$P(f) = \Tilde{f}\chi$, where $\Tilde{f}$ is the periodic extension of $f$ to the plane.
Let $a,b$ be nonnegative integers. Then $P$ maps the space $W^{a,b}_2(\mathbb{T}^2)$ continuously to
the space $\Tilde{W}^{a,b}_2(\mathbb{R}^2)$ whose norm is defined by the formula
$\left\|f\right\|_{\Tilde{W}^{a,b}_2(\mathbb{R}^2)} =
\left\|(1+|\xi|)^{a} (1+|\eta|)^{b}\hat{f}(\xi,\eta)\right\|_{L^2(\mathbb{R}^2)}$
(we have simply incorporated the junior derivatives in the Sobolev norm on the plane; recall that, for the
torus, the junior derivatives were involved in the norm from the outset).
Moreover, $P$ is an isomorphism onto its image, i.e., $\|f\|_{W^{a,b}_2(\mathbb{T}^2)}
\leqslant C\|Pf\|_{\Tilde{W}^{a,b}_2(\mathbb{R}^2)}$, because the norms in the spaces involved are
equivalent to the sum of the $L^2$-norms of the functions $\partial_1^u\partial_2^v f$ with
$u\leqslant a$, $v\leqslant b$ and $\chi$ is identically 1 on the unit square.

We want to show that $P$ possesses the same properties on Hilbert-type Sobolev spaces of
nonintegral order. So, let $a$ and $b$ be nonnegative reals. The fact that $P$ is still bounded
from $W^{a,b}_2(\mathbb{T}^2)$ to $\Tilde{W}^{a,b}_2(\mathbb{R}^2)$ is fairly easy. For example,
we can argue by complex interpolation. Alternatively, we can observe that
the Fourier transform of $Pf$ is the sum of shifts of $\hat{\chi}$ times Fourier coefficients of
$f$, and a direct estimate based on the rapid decay of $\hat{\chi}$ is possible.
The proof of the fact that $P$ is an isomorphism onto its image is more tricky.
We introduce an operator $Q$ from $\Tilde{W}^{a,b}_2(\mathbb{R}^2)$ to periodic functions
by putting
\begin{equation}
Qg(x,y) = \sum\limits_{m,n\in \mathbb{Z}}g(x + m, y + n)\chi(x + m,y + n).
\end{equation}
It can easily be seen that $Q$ takes $\Tilde{W}^{a,b}_2(\mathbb{R}^2)$ boundedly to
$W^{a,b}_2(\mathbb{T}^2)$ if $a$ and $b$ are nonnegative integers. By interpolation,
the same is true if $a,b$ are nonnegative reals. Now, we look at the operator $QP$, 
which acts on $W^{a,b}_2(\mathbb{T}^2)$. It is easily seen that $QP$ is multiplication
by the function $w(x,y)=\sum_{m,n} \chi^2(x+m,y+n)$. This function is strictly positive and
infinitely differentiable, hence $QP$ has bounded inverse (this inverse is multiplication by $1/w$).
It follows that $P$ is also an isomorphism onto its image.

Now, we can finish the proof of Theorem \ref{main2}. Recall that we assume all objects
involved in \eqref{syst} to be trigonometric polynomials. 
Consider the functions $P\varphi_j$ on the plane, then
\begin{multline*}
\sum\limits_{j=1}^{N}\|\varphi_j\|_{W_2^{\frac{k-1}{2},\frac{l-1}{2}}(\mathbb{T}^2)}
\leqslant C\sum\limits_{j=1}^{N}\|P\varphi_j\|_{\Tilde{W}_2^{\frac{k-1}{2},\frac{l-1}{2}}(\mathbb{R}^2)} \\
\leqslant C\sum\limits_{j=1}^{N}\|P\varphi_j\|_{W_2^{\frac{k-1}{2},\frac{l-1}{2}}(\mathbb{R}^2)}
+\quad\mbox{additional terms}.
\end{multline*}
If $k=l=1$, there are no additional terms. Otherwise, additional terms
of the form $\|P\varphi_j\|_{L^2(\mathbb{R}^2)}$ arise, along with terms of the form
$\|P\varphi_j\|_{W_2^{\frac{k-1}{2}, 0}(\mathbb{R}^2)}$ and $\|P\varphi_j\|_{W_2^{0, \frac{l-1}{2}}(\mathbb{R}^2)}$ if $\min k,l > 1$.
In any case, the additional terms are estimated
via the $L^1(\mathbb{T}^2)$-norms of the $\mu_j$ by Lemma \ref{L3lemma} and the
discussion after it.
To treat the remaining terms, we introduce the following functions $f_0,\dots,f_N$ on the plane:
\[
-\partial_1^k P\varphi_1=f_0;\quad\partial_2^l P\varphi_j- \partial_1^k P\varphi_{j+1} = f_j,
\quad j\in 1,\dots,N-1;\quad\partial_2^l P\varphi_N = f_N.
\]
By Theorem \ref{main3}, we have
\[
\sum\limits_{j = 1}^{N}\|P\varphi_j\|_{W_2^{(k-1)/2,(l-1)/2}(\mathbb{R}^2)}
\leqslant C\sum\limits_{j = 0}^{N}\|f_j\|_{L^1(\mathbb{R}^2)}.
\]
It remains to estimate the norms $\|f_j\|_{L^1(\mathbb{R}^2)}$ in terms of
the quantities $\|\mu_j\|_{L^1(\mathbb{T}^2)}$. By construction, we have
\begin{multline*}
f_j=\partial_2^l P\varphi_j-\partial_1^k P\varphi_{j+1}=
\partial_2^l (\chi\Tilde{\varphi}_{j}) - \partial_1^k (\chi\Tilde{\varphi}_{j+1}) = \\
\chi\Tilde{\mu_j}+\sum\limits_{q = 1}^{l}\binom lq\partial_2^q\chi\partial_2^{l-q}\Tilde{\varphi}_{j}-
\sum\limits_{q = 1}^{k}\binom kq\partial_1^q\chi\partial_1^{k-q}\Tilde{\varphi}_{j+1}
\end{multline*}
for $j=1,\dots,N-1$ (the formula differs slightly for $j=0,N$). Consequently, we have the inequality
\[
\|f_j\|_{L^1(\mathbb{R}^2)}
\leqslant C\|\mu_j\|_{L^1(\mathbb{T}^2)}+C\|\varphi_j\|_{W^{0,l-1}_1(\mathbb{T}^2)}+C\|\varphi_j\|_{W^{k-1,0}_1(\mathbb{T}^2)}.
\end{equation*}
Finally, by \ref{L1lemma}, we obtain
\[
\sum\limits_{j = 0}^{N}\|f_j\|_{L^1(\mathbb{R}^2)}
\leqslant C\sum\limits_{j = 0}^N\|\mu_j\|_{L^1(\mathbb{T}^2)}.
\]
This finishes the proof.

\end{document}